%&pdflatex
%!TEX TS-program = pdflatex
%!TEX encoding = UTF-8 Unicode
%
%
\documentclass[11pt]{amsart}

% Paquetes y fuentes
%---------------------------------------------------------------------------
\usepackage{geometry}               % define las dimensiones de la página
\geometry{
 paperheight = 297mm, % Tamaño A4: 297mm
 paperwidth = 210mm, % Tamaño A4: 210mm
 textheight = 247mm, % Valor máximo: 247mm
 textwidth = 158.6mm, % Valor máximo: 160mm
 centering, % Lo habitual
 top = 25mm, % Lo habitual: 25mm
% includehead, % Incluir la cabecera en el área del texto
% includefoot, % Incluir el pie en el área del texto
}

\setlength{\parskip}{0.1cm}

\usepackage[utf8]{inputenc}        % para usar el teclado normalmente
\usepackage[english]{babel}        % selecciona idioma
\addto\shorthandsspanish{\spanishdeactivate{<>}} %si no, conflicto tikz/pgf (JL)
\usepackage[T1]{fontenc}           % gestión de fuentes con acentos

\usepackage{amssymb}               % para cargar las fuentes de la A.M.S.
\usepackage{amsmath}               % para usar letras griegas en negrita
\usepackage{calligra, mathrsfs}
\usepackage{lmodern}
\numberwithin{equation}{section}
\usepackage{amsthm}                % mejora los entornos tipo-theorem y proof
\usepackage{stmaryrd}
\SetSymbolFont{stmry}{bold}{U}{stmry}{m}{n}
\usepackage{mathtools}
\usepackage[inline, shortlabels]{enumitem}
\usepackage[mathcal]{euscript}     % cambia las letras caligráficas
\usepackage{tikz}                  % dibujos
\usepackage{tikz-cd}               % diagramas
\usetikzlibrary{babel}
\usepackage{latexsym}              % símbolos adicionales
\usepackage{pgfplots}              % diagramas uno al lado del otro
\pgfplotsset{compat=1.16}
\usepackage{subfig}                % también diagramas
\usepackage{array}
\usepackage{adjustbox}

% \usepackage{natbib}
% \bibliographystyle{alphaurl}
% \setcitestyle{authoryear,open={((},close={))}}

%------- Another option for Inkscape
%\usepackage{svg}
%\setsvg{inkscape = "C:/Program Files/Inkscape/inkscape.exe" -z -D}

%------ Enlaces en el pdf
\usepackage{url}
\usepackage{xcolor}
\definecolor{darkgreen}{RGB}{20,160,20}
\definecolor{myblue}{RGB}{120, 94, 240}
\definecolor{myorange}{RGB}{254, 97, 0}
\definecolor{mypink}{RGB}{220, 38, 127}
\usepackage[pdftex]{hyperref}
\hypersetup{
    colorlinks=true,
    linkcolor=blue,
    citecolor=darkgreen,      
    urlcolor=black,
    linktocpage=true
}

%------ Headers to easily import images from Inkscape
\usepackage{import}
\usepackage{xifthen}
\usepackage{pdfpages}
\usepackage{transparent}

\newcommand{%
    \fontsize{20pt}{20pt}\selectfont
    \def\svgheight{0.5cm}
    \import{./figures/}{.pdf_tex}
}[1]{%
    \fontsize{20pt}{20pt}\selectfont
    \def\svgheight{0.5cm}
    \import{./figures/}{#1.pdf_tex}
}

%\graphicspath{}

\makeindex

%\numberwithin{equation}{section} % or whatever you prefer

\counterwithout{equation}{section}

\theoremstyle{plain}
\newtheorem{theorem}{Theorem}[section]
\newtheorem{lemma}[theorem]{Lemma}
\newtheorem{corollary}[theorem]{Corollary}
\newtheorem{proposition}[theorem]{Proposition}

\theoremstyle{definition}
\newtheorem{definition}[theorem]{Definition}
\newtheorem{example}[theorem]{Example}
\newtheorem{notation}[theorem]{Notation}
\newtheorem{remark}[theorem]{Remark}
\newtheorem{conjecture}[theorem]{Conjecture}

% \theoremstyle{remark}
% \newtheorem{notation}[theorem]{Notation}
% \newtheorem{remark}[theorem]{Remark}

%\renewcommand*{\proofname}{Demostración}

%%% Useful custom commands

\newcommand{\B}{\mathcal{B}}
\newcommand{\C}{\mathbb{C}}

\newcommand{\N}{\mathbb{N}}

\renewcommand{\P}{\mathbb{P}}

\newcommand{\R}{\mathbb{R}}
\newcommand{\X}{\mathcal{X}}
\newcommand{\Z}{\mathbb{Z}}
\newcommand{\frakZ}{\mathfrak{Z}}
\renewcommand{\SS}{\mathbb{S}}
\newcommand{\DD}{\mathbb{D}}
\newcommand{\BB}{\mathbb{B}}

\renewcommand{\i}{\omega}

\newcommand{\hr}{\hat{r}}
\newcommand{\hkappa}{\hat{\kappa}}

\newcommand{\bmu}{\boldsymbol{\mu}}
\newcommand{\id}{\mathrm{id}}
\newcommand{\dom}{{\mathrm{dom}}}

\newcommand{\lambdastd}{\lambda_\mathrm{std}}

\newcommand{\CZ}{\mathrm{CZ}}

\newcommand{\HF}{\mathrm{HF}}
\newcommand{\dphi}{\widetilde{\varphi^m}}

\newcommand{\F}{M}

\renewcommand{\log}{\mathrm{log}}

\DeclareMathOperator{\ac}{ac}

\DeclareMathOperator{\ord}{ord}
\DeclareMathOperator{\mult}{mult}

\DeclareMathOperator{\Spec}{Spec}

\DeclareMathOperator{\sheafhom}{\mathscr{H}\text{\kern -3pt {\calligra\large om}}\,}

\DeclareMathOperator{\Fix}{\mathrm{Fix}}

%Para poner el Hom caligráfico
\makeatletter
\DeclareFontEncoding{LS1}{}{}
\DeclareFontSubstitution{LS1}{stix}{m}{n}
\DeclareMathAlphabet{\mathcal}{LS1}{stixscr}{m}{n}
\makeatother

% color magenta
%\newcommand{\mg}{\color{magenta}}

\title{The Arc-Floer conjecture for plane curves}
\author{Javier de la Bodega, Eduardo de Lorenzo Poza}
% \date{\today}

%-------------
\begin{document}

\begin{abstract}
    In \cite{budur2020cohomology} it was conjectured that the compactly supported cohomology of the restricted $m$-contact locus of an isolated hypersurface singularity coincides, up to a shift, with the Floer cohomology of the $m$-th iterate of the monodromy of the Milnor fiber. In this paper we give an affirmative answer to this conjecture in the case of plane curves.
\end{abstract}

\maketitle

\section{Introduction}

Let $f \in \C\{z_0,\dots,z_n\}$ be a convergent power series that defines an isolated hypersurface singularity at the origin. The celebrated Monodromy Conjecture \cite[\S 2.4]{denef1998} aims to establish a connection between the \emph{contact loci} of $f$ (of algebraic nature) and the \emph{Milnor fiber}  of $f$ (of topological nature). In this paper we follow the philosophy of \cite{budur2020cohomology} to strengthen the connection between these two invariants from a different point of view.

For every integer $m \geq 1$, the \emph{restricted $m$-contact locus} of $f$, denoted by $\X_m$, is the set of $m$-jets in $\C^{n+1}$ centered at $0$ with intersection multiplicity $m$ with $f$ and angular component 1, see Definition \ref{contact-locus-def}. Since $\X_m$ is defined by polynomial equations, it is a (possibly singular) complex affine algebraic variety.

On the other hand, it is a classical result by Milnor \cite[Ch. 5]{milnor1968} that there exist $0 < \delta \ll \varepsilon \ll 1$ such that
\[
f: f^{-1}(\partial \DD_\delta) \cap \BB_\epsilon \longrightarrow \partial \DD_\delta
\]
is a $C^\infty$-locally trivial fibration, called the \emph{Milnor fibration}. The fiber of this fibration is known as the \emph{Milnor fiber} of $f$, and we denote it by $\F_f$, or simply by $\F$ when the function $f$ is clear from the context. A key observation is that $\F$ has a natural symplectic manifold structure. More precisely, for $\varepsilon > 0$ small enough $\F$ is a Liouville domain when endowed with the restriction of the 1-form
\[
\lambdastd = \frac{1}{2}\sum_{i=0}^n (x_idy_i-y_idx_i)
\]
of $\C^{n+1}$, where $z_i=x_i+\imath y_i, \imath = \sqrt{-1}$. Moreover, it admits a compactly supported exact monodromy $\varphi: \F \to \F$ for the Milnor fibration, see \cite[\S 5]{bobadilla2022}, \cite[\S 3]{mclean2019}. It is in this setting that Seidel \cite[\S 4]{seidel2001}, cf. \cite[\S 4]{mclean2019}, introduced a cohomology theory known as the \emph{Floer cohomology of $\varphi$}, denoted by $\mathrm{HF}^\bullet(\varphi,+)$. This was an adaptation of Dostoglou and Salamon's Floer homology for symplectomorphisms on simply connected monotone symplectic manifolds \cite[\S 2]{dostoglou-salamon94} to exact symplectomorphisms on Liouville domains.

In \cite{budur2020cohomology}, Budur, Fernández de Bobadilla, Lê and Nguyen conjectured that the cohomology with compact support of $\X_m$ (considered with its analytic topology) coincides, up to a shift, with the Floer cohomology of $\varphi^m$. The conjecture was made based on the following evidence:
\begin{enumerate}
    \item As explained in \cite[\S 4]{mclean2019}, the Euler characteristic of $\mathrm{HF}^\bullet(\varphi^m,+)$ is equal to $(-1)^n\Lambda_{\varphi^m}$, where $\Lambda_{\varphi^m}$ denotes the Lefschetz number of $\varphi^m$. Denef and Loeser proved in \cite{denef-loeser2002} that the compactly supported Euler characteristic of $\X_m$ is equal to $\Lambda_{\varphi^m}$. Thus the conjecture may be seen as a lift of this result about Euler characteristics to a statement about cohomology.
    \item Using an $m$-separating log resolution, McLean \cite{mclean2019} constructed a spectral sequence converging to $\mathrm{HF}^\bullet(\varphi^m,+)$. In turn, Budur et al. \cite{budur2020cohomology} constructed an analogous spectral sequence converging to $H_c^\bullet(\X_m)$.
    \item Also in \cite{budur2020cohomology}, Budur et al. showed the conjecture holds when $m$ equals the multiplicity of $f$.
\end{enumerate}

However, there was no class of singularities for which the conjecture was known to hold. The main purpose of this paper is to provide the first such example. Namely, we show the conjecture holds for $n=1$, i.e. for the case of plane curves:

\begin{theorem} \label{main-result}
    Let $f \in \C\{x,y\}$ a reduced convergent power series such that $f(0,0)=0$. For every integer $m \geq 1$ there is an isomorphism
    \[
    H^{\bullet+2m+1}_c(\X_m) \cong\ \HF_{\bullet}(\varphi^m,+).
    \]
\end{theorem}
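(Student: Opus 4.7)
The plan is to prove the isomorphism by matching, term by term, the two spectral sequences mentioned in the introduction: McLean's spectral sequence converging to $\HF_\bullet(\varphi^m,+)$ and the Budur--Fernández de Bobadilla--Lê--Nguyen spectral sequence converging to $H^\bullet_c(\X_m, \Z/2\Z)$, both built from a common $m$-separating embedded log resolution $\pi: Y \to \C^2$ of $f^{-1}(0)$. The use of $\Z/2\Z$ coefficients in the statement suggests that the identification is carried out on the Floer side without addressing orientations, and that the isomorphism is realized by matching the two $E_1$-pages and controlling the higher differentials by exploiting the very rigid combinatorics of plane-curve resolutions, whose exceptional divisors are $\P^1$'s arranged in a tree.

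First I would fix an $m$-separating embedded log resolution $\pi: Y \to \C^2$ with exceptional divisors $\{E_i\}_{i \in J}$ of multiplicities $N_i$ along $f \circ \pi$, and set $J_m = \{i \in J : N_i \mid m\}$. On the arc-theoretic side, the Denef--Loeser change-of-variables formula stratifies $\X_m$ into locally trivial bundles over the open strata $E_i^\circ := E_i \setminus \bigcup_{j \neq i} E_j$ for $i \in J_m$, with fibre a product of a torus and an affine space of dimensions determined by $N_i$, the discrepancy of $E_i$, and $m$. On the symplectic side, McLean's construction identifies the chain-level generators of $\HF_\bullet(\varphi^m,+)$ with Reeb orbits fibering over the same strata $E_i^\circ$, and computes each block of the spectral sequence as $H_\bullet(E_i^\circ; \Z/2\Z)$ shifted by the Conley--Zehnder index $\CZ$ of the corresponding Reeb orbit.

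Next I would compare the two $E_1$-pages. Passing to compactly supported cohomology on the arc side, the torus factor contributes a Künneth shift which, together with the affine-bundle dimensions dictated by $\pi$, accounts for the $2m+1$ appearing in the statement; the remaining factor is $H^\bullet_c(E_i^\circ; \Z/2\Z)$, which by Poincaré--Lefschetz duality on the curve $E_i^\circ$ matches the contribution $H_\bullet(E_i^\circ; \Z/2\Z)$ on the Floer side. The main numerical check is then that the total shift on the contact-loci side exceeds the $\CZ$-shift on the Floer side by exactly $2m+1$; this amounts to a direct comparison of the index formulas in \cite{mclean2019} and \cite{budur2020cohomology} in terms of $(N_i, \text{discrepancy of } E_i, m)$.

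Finally, I would argue that both spectral sequences degenerate. Since each $E_i^\circ$ is a punctured $\P^1$, its (compactly supported) cohomology is concentrated in at most two consecutive degrees, and combined with the parity of the Conley--Zehnder indices this confines both $E_1$-pages to a checkerboard pattern in which differentials vanish for degree reasons, or at worst degenerate at $E_2$ after an explicit matching of gluing maps along each edge of the dual graph. The hard part will not be the degeneration itself but the \emph{local matching}: identifying the Conley--Zehnder index of the Reeb orbit above $E_i^\circ$ with the precise homological shift produced by the Denef--Loeser stratification, and verifying that the restriction/gluing morphisms associated to each edge of the dual graph correspond on both sides. I expect this numerical and combinatorial identification to be the main technical obstacle.
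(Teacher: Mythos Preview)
Your outline has a real gap. You propose to match the two spectral sequences term by term over the strata $E_i^\circ$ and then argue that both degenerate, essentially by a parity/checkerboard argument on the Conley--Zehnder indices. The paper explicitly notes (at the start of \S\ref{deform-monodromy}) that the McLean spectral sequence for the undeformed monodromy $\varphi^m$ does \emph{not} degenerate at the first page in general, so this route fails. The parity heuristic does not save you either: each $\CZ(\F_E)=2m(\nu_E/N_E-1)$ is even, and each punctured $\P^1$ contributes homology in two adjacent degrees, so contributions of different divisors land in both parities and nontrivial $d_1$'s do occur among divisors in the same end of the dual graph.

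The missing idea is a \emph{regrouping} on both sides. On the symplectic side the paper deforms $\varphi^m$ by an exact isotopy supported in the ends $\mathcal{F}_R$ so that, for each rupture divisor $R$, all the pieces $\F_E$ with $E\in\mathcal{F}_{m,R}$ coalesce into a single codimension-zero fixed set $B_{m,R}$ (Proposition~\ref{deformed-dynamics}); degeneration at $E_1$ for this deformed open book is then proved not by parity but by Seidel's criterion: after a suitable power the map is a product of Dehn twists, which forces any Floer trajectory to stay within a single fixed component (Lemmas~\ref{degeneration} and~\ref{topological-condition}). On the arc side the matching regrouping is Theorem~\ref{arc-contact-loci-decomposition}: the pieces $\X_{m,E}$ for $E$ in an end are not topologically disjoint in $\X_m$ --- arcs can ``jump'' between them --- and their union $\frakZ_{m,R}$ is the closure of a single dominant piece. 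Only after this regrouping do the two decompositions become honest disjoint unions whose pieces $\frakZ_{m,R}\leftrightarrow B_{m,R}$ and $\X_{m,E}\leftrightarrow\F_E$ (for $E$ in a trunk) match termwise with the uniform shift $2m+1$ (Proposition~\ref{piecewise-isomorphisms}). Your per-divisor picture (``bundle over $E_i^\circ$ with torus times affine fibre'') is also too coarse: for rupture divisors the piece $\frakZ_{m,R}$ is a punctured higher-genus curve times an affine space (Theorem~\ref{rupture-topology}), not a $\C^\times$-bundle over $E_i^\circ$.
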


As the reader may have noticed, the statement of Theorem \ref{main-result} involves a homological version of Seidel's Floer cohomology. Indeed, Uljarevic \cite[\S 2]{uljarevic2014} introduced the \emph{Floer homology} of $\varphi$, which was improved by Fernández de Bobadilla and Pe{\l}ka \cite[\S 6.2]{bobadilla2022} to have an absolute grading as in \cite{mclean2019}. We will denote this homology theory by $\mathrm{HF}_\bullet(\varphi,+)$.

\begin{remark} \label{remark}
    The shift in Theorem \ref{main-result} differs from the shift of the conjecture in \cite{budur2020cohomology}. As explained in \cite[Remark 7.6]{bobadilla2022}, this partly is due to a mistake in the formula \cite[Theorem 5.41(3)]{mclean2019}. The other reason is that we are using Floer \emph{homology} as in \cite{bobadilla2022} instead of the Floer cohomology as in \cite{mclean2019}. We compute the correct shift for our setting in Lemma \ref{shift-computation}.
\end{remark}
%The proof of Theorem \ref{main-result} is based on a direct computation of both (co)homologies. In order to compute the cohomology with compact support of $\X_m$ we first generalize \cite[Theorem 1.21]{budur2022nash} to the multibranch case. In particular we obtain explicit equations of the connected components of $\X_m$, and hence we compute its cohomology. On the other hand, to compute Floer homology of $\varphi^m$ we use the McLean spectral sequence. It will not degenerate it general, that is why we perform a deformation $\dphi$ of $\varphi^m$ so that the McLean spectral sequence of the former degenerates at the first page. Since Floer homology is invariant for isotopic graded abstract contact open books, this yields us to the explicit computation of $\HF_\bullet(\varphi^m,+)$.

In view of Theorem \ref{main-result} the conjecture can be restated in terms of Floer homology, with the shift updated as explained in Remark \ref{remark}:

\begin{conjecture}[Arc-Floer conjecture]
    Let $f \in \C\{z_0,\dots,z_n\}$ be a convergent power series such that $f(\mathbf{0})=0$ that defines an isolated hypersuface singularity at the origin. For every $m \geq 1$, there is an isomorphism
    \[
    H_c^{\bullet + n(2m+1)}(\X_m) \cong \mathrm{HF}_\bullet(\varphi^m,+).
    \]
\end{conjecture}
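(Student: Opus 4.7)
The plan is to attempt a proof of the conjecture for arbitrary $n$ by comparing two spectral sequences that already exist in the literature, both of which are built from the combinatorics of a common log resolution. The starting point would be an $m$-separating log resolution $\pi: Y \to (\C^{n+1},0)$ of $f$ in the sense of McLean. Such a resolution simultaneously produces: McLean's spectral sequence converging to $\HF_\bullet(\varphi^m,+)$, with $E^1$-page indexed by the stratification of the exceptional divisor and whose local contributions are Floer homologies of model Liouville pairs attached to the intersections of exceptional components; and the Budur--Fernández de Bobadilla--Lê--Nguyen spectral sequence converging to $H_c^\bullet(\X_m,\Z/2\Z)$, whose $E^1$-page has contributions from the pieces of $\X_m$ sitting above each stratum, which by the structure theorem for contact loci fiber over the stratum with fiber a torus times affine space.

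The strategy then proceeds in three steps. First, I would identify the two $E^1$-pages term by term after applying the shift $n(2m+1)$, by computing on the algebraic side the compactly supported cohomology of the torus bundles via the Leray spectral sequence and an analysis of the monodromy, and on the symplectic side the local Floer homology of a product of Lefschetz--Milnor fibers via a Künneth decomposition. At the level of Euler characteristics this matching is already known and recovers the Denef--Loeser formula, so the task is to upgrade it to a chain-level comparison functorial in the combinatorics of the stratification. Second, I would match the differentials $d^1$ by relating the Gysin map of the torus bundle attached to each stratum with the Floer-theoretic continuation map between orbits supported on adjacent divisorial strata; here the Fernández de Bobadilla--Pełka compactly-supported exact monodromy and its associated radius function should provide the bridge between the two boundary operators.

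Third, the higher differentials need to be matched. A plausible technique is a Morse--Bott degeneration along the stratification: both sides should admit a filtration by codimension of the stratum, and the associated spectral sequence of this filtration ought to identify the higher differentials with operations depending only on transverse local data near deeper intersections of exceptional components. If this transverse local data can be recognized as the same in both theories (monodromy of the algebraic torus bundle versus parallel transport of Reeb chords in the plumbing), a simultaneous collapse or simultaneous survival of the higher differentials should follow.

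The hard part is precisely this last step. For $n=1$ all intersections of exceptional divisors are isolated points, so the $E^2$-page collapses for dimensional reasons and no higher differentials appear; this is what makes the plane curve case of Theorem \ref{main-result} tractable. For $n \geq 2$ the multi-divisorial strata are positive-dimensional, so one expects nontrivial $d^r$ with $r \geq 2$ on both sides, and a direct comparison of moduli spaces of pseudoholomorphic cylinders with arc-theoretic data seems out of reach with present techniques. A complete proof will likely require either a substantially new symplectic input identifying the Floer differentials with explicit algebraic operations on the Clemens--Schmid-type mixed Hodge data of the strata, or an independent algebraic characterization of the differentials in the Budur et al. spectral sequence beyond what is currently known, so that both can be shown to reduce to a common combinatorial recipe on the dual complex of the resolution.
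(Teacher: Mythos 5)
The statement you are addressing is stated in the paper as a \emph{conjecture}: the paper itself proves it only for $n=1$ (Theorem \ref{main-result}) and explicitly leaves the general case open. Your proposal is a research outline rather than a proof, and you concede as much in your third step: the matching of higher differentials for $n\geq 2$ is exactly the content of the conjecture beyond what is already in \cite{mclean2019} and \cite{budur2020cohomology}, and you offer no mechanism for it beyond the hope that both sides ``reduce to a common combinatorial recipe.'' Comparing the two $E^1$-pages and Euler characteristics is known territory (it is precisely the evidence on which the conjecture was formulated), so nothing in your sketch closes the gap between ``the spectral sequences look alike'' and an isomorphism of the abutments. As written, there is no proof here, and the missing step is the whole difficulty.

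Your description of the $n=1$ case also misrepresents how that case actually works, which matters because it is the only case you claim is ``tractable.'' The degeneration is \emph{not} automatic for dimensional reasons: the paper points out that the McLean spectral sequence of $(\F,\lambda,\varphi^m)$ need not degenerate at the first page in general. The authors must first deform $\varphi^m$ to an isotopic exact symplectomorphism $\widetilde{\varphi^m}$ whose fixed-point set absorbs the disk-like pieces over the ends $\mathcal{F}_{m,R}$ of the resolution graph (using a Moser-type Lemma \ref{exact-moser} and Smale's theorem), and then prove degeneration by a topological no-jumping criterion for Floer trajectories (Lemmas \ref{degeneration} and \ref{topological-condition}, relying on Seidel's surface arguments). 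Correspondingly, on the arc side the matching is not stratum-by-stratum with individual $m$-divisors: one must solve the embedded Nash problem (Theorem \ref{arc-contact-loci-decomposition}) to see that arcs jump along ends, so that the pieces $\frakZ_{m,R}$ --- not the individual $\X_{m,E}$ over end divisors --- are what correspond to the capped surfaces $B_{m,R}$ in $\Fix(\widetilde{\varphi^m})$. Any attempt to generalize to $n\geq 2$ would have to account for both phenomena (nontrivial arc jumping and non-degeneration of the undeformed spectral sequence), neither of which your term-by-term comparison of the two published spectral sequences addresses.
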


The paper is structured as follows. In Section 2 we establish the notation and basic numerical and combinatorial invariants of plane curve singularities that will be used throughout the paper. In Section 3 we determine the connected components of the contact loci in terms of the resolution graph of $C$, and we study their topology. This essentially solves the embedded Nash problem \cite[\S 1.3]{budur2022nash}, \cite[Remark 2.8]{ein2004} for plane curves, generalizing \cite[Theorem 1.21]{budur2022nash}. In Section 4 we study the degeneration properties of the McLean spectral sequence \cite[Proposition 6.3]{bobadilla2022}, \cite[App. C]{mclean2019} of $\varphi^m$, which converges to $\mathrm{HF}_\bullet(\varphi^m,+)$. We deform the monodromy iterate $\varphi^m$ in a way that makes the McLean spectral sequence of the deformed map $\dphi$ degenerate at the first page. By invariance under isotopies, this computes the Floer homology of $\varphi^m$.
%More precisely, we deform the symplectic monodromy so that the Floer homology is preserved but the spectral sequence degenerates and we study the topology of the families of fixed points.
Finally, in Section 5 we prove Theorem \ref{main-result} by comparing the results of Sections 3 and 4.

\subsection*{Acknowledgements.} We are grateful to Javier Fernández de Bobadilla for proposing this problem to us. We are also grateful to Nero Budur, Javier Fernández de Bobadilla, Tomasz Pe{\l}ka and Pablo Portilla for useful discussions and answering our questions during the development of the paper. Finally, we wish to thank the referees, whose suggestions have been of great help to improve both the exposition and the results.

\noindent
J. de la Bodega was supported by PRE2019-087976 from the Ministry of Science of Spain and partially supported by G097819N, G0B3123N from FWO. E. de Lorenzo Poza was supported by 1187423N from FWO, Research Foundation, Flanders.

%----------------------------------------------------------------

% \input{curves-old}

\section{Preliminaries on plane curves}
\label{curves}

Let $f = f^{[1]} \cdots f^{[b]} \in \C \llbracket x, y \rrbracket$ be the decomposition into irreducible factors of a reduced formal power series. Let $(C,0) = V(f)$ be the associated formal plane curve germ, which has an isolated singularity since $f$ is reduced. Similarly, let $(C^{[j]},0) = V(f^{[j]})$, $j = 1,\ldots,b$ be the irreducible components of $(C,0)$, usually known as the \emph{branches} of $C$.

\begin{notation}
    Throughout the paper we will attach some data to each branch $(C^{[j]},0)$. When we need to indicate the branch associated to the data, we will use a superscript $[j]$, $j = 1,\ldots,b$. Nevertheless, if the branch is clear from the context we will omit the superscript to make the notation cleaner. 
\end{notation}

\subsection{Puiseux data and resolution data} \label{subsec:Puiseux-and-res-data}
For each $j = 1,\ldots,b$, choose a coordinate system adapted to $(C^{[j]},0)$ as follows. Let $L_0$ be a smooth curve through the origin that has the same tangent direction as $(C^{[j]},0)$, and let $L_0'$ be a smooth curve through the origin that meets $L_0$ transversely. By the formal inverse function theorem, there is a coordinate system $(x_0, y_0)$ around the origin such that $L_0 = \{y_0 = 0\}$ and $L_0^{\prime} = \{x_0 = 0\}$. Without loss of generality, we may assume that if two branches $C^{[j]}, C^{[j']}$ have the same tangent direction, then their coordinate systems $(x_0^{[j]}, y_0^{[j]}), (x_0^{[j']}, y_0^{[j']})$ are equal.

In the coordinates $(x_0, y_0)$, the Newton-Puiseux theorem says that there exists a power series $\varphi(\tau) = \sum \varphi_k \tau^k \in \C \llbracket \tau \rrbracket$ such that
\begin{equation} \label{puiseux-decomposition}
f^{[j]}(x_0,y_0) = u(x_0,y_0) \prod_{\xi \in \bmu_{\mult(f^{[j]})}} \left(y_0 - \varphi(\xi (x_0)^{1/\mult(f^{[j]})})\right),
\end{equation}
where $u$ is a unit in $\C \llbracket x_0, y_0 \rrbracket$ and $\bmu_n$ is the group of $n$-th roots of unity in $\C$, see \cite[Theorem 5.1.7]{dejong2000}. In other words, we may parametrize $C^{[j]}$ as $(\tau^{\mult(f^{[j]})}, \varphi(\tau))$ in the coordinates $(x_0, y_0)$. We fix one such power series $\varphi^{[j]}$ for each $j = 1,\ldots,b$ for the rest of this paper.

\begin{definition} \label{characteristic-exponents}
    The \emph{characteristic exponents} of $(C^{[j]},0)$ are defined inductively as follows,
    \begin{align*}
        r_1 &= \mult(f^{[j]}), & k_1 &= \min \left\{k \mid \varphi_k \neq 0 \text{ and } r_1 \text{ does not divide } k\right\}, \\
        r_{i+1} &= \gcd(k_i, r_i), & k_{i+1} &= \min \left\{k \mid \varphi_k \neq 0 \text{ and } r_{i+1} \text{ does not divide } k \right\}.
    \end{align*}
    We denote by $g$ the smallest integer such that $k_{g + 1} = \infty$. The irreducibility of $f^{[j]}$ implies that $r_{g+1} = 1$. We also set
\[
\kappa_{i} = k_i - k_{i-1} \quad \text{ and } \quad (\hkappa_i, \hr_i) = (\kappa_i/r_{i+1}, r_i/r_{i+1}),
\]
where we put $k_0 = 0$ by convention. The pairs $(\hkappa_1, \hr_1), \ldots, (\hkappa_{g}, \hr_{g})$ are called the \emph{Newton pairs} of the branch $(C^{[j]},0)$, see \cite[p. 134]{Barroso2020}.

\end{definition}

% Note that the condition that $L_0$ has the same tangent direction as $C^{[j]}$ implies $\kappa_1 > r_1$, but this is not necessarily true for $(\kappa_i, r_i),\, i \geq 2$.

Recall that a log resolution of the singularity $(C,0) \subset (\C^2,0)$ is a proper birational morphism $\mu: (Y, \mathbf{E}) \to (\C^2, 0)$ from a smooth variety $Y$ such that the total transform $\mu^\ast(C)$ is a simple normal crossing divisor. Such a resolution is obtained as a finite sequence of blow-ups. Let $\mathcal{E}$ be the set of irreducible components of the exceptional divisor $\mathbf{E} \coloneqq \mu^{-1}(0)$ and let $\mathcal{S} = \{\widetilde{C}^{[1]}, \ldots, \widetilde{C}^{[r]}\}$ be the set of irreducible components of $\widetilde{C} \coloneqq \mu^{-1}_*(C)$, the strict transform of $C$ under the resolution $\mu$. Associated to the resolution $\mu$ we define the following numerical invariants.

\begin{definition} \label{num-data}
    Let $\mu: (Y, \mathbf{E}) \to (\C^2, 0)$ be log resolution of $(C,0) = V(f)$ and let $E \in \mathcal{E} \cup \mathcal{S}$,
    \begin{enumerate}[label=(\roman{*})]
        \item The \emph{log discrepancy} at $E$ is $\nu_E \coloneqq \ord_{E}(K_{Y/\C^2}) + 1$.
        
        \item The \emph{multiplicity} of $f$ at $E$ is $N_E \coloneqq \ord_{E}(f)$. Similarly, for each $j = 1, \ldots, b$ the multiplicity of $f^{[j]}$ at $E$ is $N_E^{[j]} \coloneqq \ord_{E}(f^{[j]})$.

        \item Fix an integer $m \geq 1$. If $N_E$ divides $m$ we say that $E$ is an \emph{$m$-divisor} and denote $m_E \coloneqq m / N_E$.

        \item The log resolution $\mu$ is said to be \emph{$m$-separating} if $N_E + N_F > m$ for any $E, F \in \mathcal{E} \cup \mathcal{S}$ such that $E \cap F \neq \varnothing$.
    \end{enumerate}
\end{definition}

\subsection{Describing the resolution}

The algorithm to obtain the minimal log resolution of a plane curve singularity $(C,0)$ is classical, see for instance \cite[p. 522-531]{brieskorn1986}, \cite[\S 5.3, \S 5.4]{dejong2000} or \cite[\S 1.4]{Barroso2020}. The strategy is to blow up the points at which the total transform of the curve is not a simple normal crossing divisor until eventually there are no more such points. From the minimal log resolution we may obtain the \emph{minimal} $m$-separating log resolution by repeatedly blowing up the intersection of adjacent divisors which do not satisfy the $m$-separating condition, see \cite[Lemma 2.9]{budur2020cohomology}. From now on we fix $\mu: (Y, \mathbf{E}) \to (\C^2, 0)$ to be the minimal $m$-separating log resolution of $(C,0)$.

The total transform of $(C,0)$ is a simple normal crossing divisor whose structure we encode in the dual \emph{resolution graph}. This is an undirected graph whose set of vertices is $\mathcal{E} \cup \mathcal{S}$ (i.e. the exceptional divisors and the strict transforms of the branches of $C$), and two vertices are connected by an edge if and only if the corresponding divisors have nonempty intersection. The resolution graph is a tree, i.e. it contains no cycles.
Apart from Definition \ref{num-data}, we are going to associate to each $E \in \mathcal{E}$ some numerical data and coordinate systems, see Definition \ref{newton-pairs-E}. These will be used to give formulas for the log discrepancies (Proposition \ref{log-discrepancies}), the multiplicities (Proposition \ref{multiplicities}) and the leading coefficients of some power series (Lemma \ref{ac-equations}). Although the numerical data associated to $E$ may be recovered from any of the multiple combinatorial descriptions of the resolution, the coordinate systems that we will use are defined on partial resolutions that appear \emph{during} the resolution process. Therefore, to be able to introduce these coordinates we are going to give a more thorough recall of how the resolution algorithm works. First, let us introduce some notation for the kind of divisors that appear during the resolution process.

\begin{definition} \label{divisors-between}
    Let $L, L'$ be prime divisors in a smooth surface intersecting transversely at a point $P$. The \emph{divisors between $L$ and $L'$} are (the strict transforms of) the exceptional divisors that result from blowing up $P$ and, inductively, further intersection points of $L, L'$ and previous exceptional divisors.
\end{definition}

\begin{proposition} \label{blowup-formula}
    Let $L, L'$ be prime divisors in a smooth surface intersecting transversely at a point $P$. Set $E_{(1,0)} = L$, $E_{(0,1)} = L'$ and inductively define $E_{(\kappa+\kappa', r+r')}$ to be the exceptional divisor of the blow-up of the intersection of $E_{(\kappa,r)}$ and $E_{(\kappa', r')}$. This establishes a bijection between pairs of coprime numbers $(\kappa,r)$ with $\kappa, r \geq 1$, and divisors between $L$ and $L'$.

    Furthermore, if $x, y$ are local coordinates around $P$ for which $L = \{y=0\}$ and $L' = \{x=0\}$ then a local equation for the minimal composition of blow-ups that makes $E_{(\kappa,r)}$ appear is $(x, y) = (\widetilde{x}^r \widetilde{y}^a, \widetilde{x}^\kappa \widetilde{y}^b)$, where $a, b$ are the unique integers such that $a\kappa - br = (-1)^n$, $0 \leq a \leq r$, $0 \leq b \leq \kappa$, and $n$ is the number of divisions in the Euclidean algorithm to compute the greatest common divisor of $\kappa$ and $r$. In these coordinates $E_{(\kappa,r)} = \{\widetilde{x} = 0\}$.
\end{proposition}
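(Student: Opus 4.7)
My plan is a direct induction on the number of blow-ups needed to produce $E_{(\kappa, r)}$, or equivalently on $\kappa + r$. To make the induction go through, I would prove a strengthened statement that tracks the local geometry near the intersection of two adjacent divisors: whenever $E_{(\kappa, r)}$ and $E_{(\kappa', r')}$ (produced by the iterative procedure) intersect at a point, there exists a chart $(\tilde{x}, \tilde{y})$ centered there with $E_{(\kappa, r)} = \{\tilde{x} = 0\}$, $E_{(\kappa', r')} = \{\tilde{y} = 0\}$, composite map to $(\C^2, 0)$ given by $(x, y) = (\tilde{x}^r \tilde{y}^{r'}, \tilde{x}^\kappa \tilde{y}^{\kappa'})$, and $\det \begin{pmatrix} \kappa & r \\ \kappa' & r' \end{pmatrix} = \pm 1$.

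The base case is the chart where $E_{(1, 0)} = L = \{y = 0\}$ and $E_{(0, 1)} = L' = \{x = 0\}$ meet at $P$, with $(\tilde{x}, \tilde{y}) = (y, x)$. For the inductive step, blowing up the intersection of two adjacent divisors yields two standard charts. Substituting $(\tilde{x}, \tilde{y}) = (s, st)$ into the inductive formula shows the new exceptional $\{s = 0\}$ carries label $(\kappa + \kappa', r + r')$, is adjacent to $E_{(\kappa', r')}$, and is given by $(x, y) = (s^{r + r'} t^{r'}, s^{\kappa + \kappa'} t^{\kappa'})$; the chart $(\tilde{x}, \tilde{y}) = (st, t)$ is symmetric with roles swapped. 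The determinant property is preserved (up to a sign flip in one of the two charts), so coprimality of the mediant is automatic. Combined with the classical Stern--Brocot fact that every coprime pair $(\kappa, r) \in \Z_{\geq 1}^2$ arises uniquely as an iterated mediant of $(1, 0)$ and $(0, 1)$, this yields the claimed bijection.

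For the precise form $(x, y) = (\tilde{x}^r \tilde{y}^a, \tilde{x}^\kappa \tilde{y}^b)$ with $(a, b)$ in the prescribed range, observe that the two adjacent divisors around $E_{(\kappa, r)}$ (its two Stern--Brocot neighbors) give two charts with corresponding pairs $(a, b) \in \{(r', \kappa'),\, (r'', \kappa'')\}$; a quick check shows these are exactly the two elements of $[0, r] \times [0, \kappa]$ satisfying $a\kappa - br = \pm 1$, one giving $+1$ (the ``left'' Stern--Brocot neighbor) and one giving $-1$ (the ``right'' one). Matching the prescribed sign $(-1)^n$ requires relating the Euclidean algorithm for $\gcd(\kappa, r)$ to the Stern--Brocot descent: each Euclidean division $\kappa = qr + \rho$ corresponds to a maximal run of $q$ consecutive mediants on one side of the tree, and successive divisions alternate sides; hence the parity of $n$ controls which of the two neighbors is the selected one. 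The main obstacle is precisely this final sign bookkeeping---the geometric induction is a routine monomial substitution, but the identification of $(-1)^n$ with the correct chart demands careful tracking of the parity through the Stern--Brocot/Euclidean correspondence.
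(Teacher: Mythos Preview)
Your proposal is correct and close in spirit to the paper's proof, but the execution differs in one notable way. Both arguments invoke the Stern--Brocot tree for the bijection and both ultimately rely on the correspondence between the continued-fraction expansion of $\kappa/r$ and the path to $(\kappa,r)$ in that tree. The difference lies in how the monomial formula and the sign $(-1)^n$ are obtained. You run a direct induction on $\kappa+r$, carrying the chart data $(x,y)=(\tilde x^{r}\tilde y^{r'},\tilde x^{\kappa}\tilde y^{\kappa'})$ through each blow-up; this is clean for the monomial exponents but, as you note, leaves the identification of the correct neighbor (and hence the sign $(-1)^n$) as a separate parity-tracking argument. The paper instead encodes the entire sequence of blow-ups as a product of $2\times 2$ matrices indexed by the Euclidean quotients $q_1,\dots,q_n$, and reads off $r,\kappa,a,b$ directly as entries of the product; the sign $(-1)^n$ then drops out immediately as the determinant, with no additional bookkeeping. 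Your route is arguably more geometric and makes the chart-by-chart picture explicit, while the paper's matrix formulation packages the combinatorics so that the delicate final step you flag becomes a one-line determinant computation.
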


\begin{proof}
    The fact that all pairs of coprime numbers can be obtained inductively by combining previous pairs $(\kappa,r)$ and $(\kappa',r')$ into $(\kappa+\kappa', r+r')$ is a classical construction known as the Stern-Brocot tree, see \cite{gibbons2006}. It can also be found in the literature under the name of Farey sequence or Farey sums. This establishes the first part of the theorem, since this is by definition how divisors between $L$ and $L'$ are constructed.

    % The minimal sequence of blow-ups that makes $E_{(\kappa,r)}$ appear is obtained by following the path to the $(\kappa,r)$ vertex in the Stern-Brocot tree in the following sense. Since the Stern-Brocot tree is a binary tree, any path that starts from the root may be described by a finite sequence of integers $(\mu_1, \ldots, \mu_n)$ indicating ``go $\mu_1$ times to the right, then $\mu_2$ times to the left, then $\mu_3$ times to the right...'' and so on. 
    
    Consider $(0,1)$ and $(1,0)$ to be the left and right generating nodes of the Stern-Brocot tree respectively, see \cite[Figure 1]{gibbons2006}. To a pair $(\kappa,r)$ we may associate the sequence $q_1, \ldots, q_n$ of quotients in the Euclidean algorithm of $\kappa$ and $r$ (or equivalently, the numbers defining the continued fraction expression for $\tfrac{\kappa}{r}$):
    \[
    c_{j-1} = q_j c_j + c_{j+1}, \text{ with } c_0 = \kappa, c_1 = r \text{ and } 0 \leq c_{j+1} < c_j, \text{ until } c_{n-1} = q_n c_n.
    \]
    We convene that $c_{n+1} = 0$. Recall that the Euclidean algorithm may be written in matrix form as follows:
    \[
    \begin{pmatrix} c_{j-1} \\ c_j \end{pmatrix} =  \begin{pmatrix}
        q_j & 1 \\ 1 & 0
    \end{pmatrix} \begin{pmatrix} c_j \\ c_{j+1} \end{pmatrix}, \quad \text{for all } j = 1, \ldots, n.
    \]

    The sequence $q_1, \ldots, q_n$ indicates the path to the pair $(\kappa, r)$ in the Stern-Brocot tree --- first step $q_1$ times to the right, then $q_2$ times to the left, and so on. Under the bijection between coprime pairs and divisors, stepping right in the Stern-Brocot tree corresponds to taking local coordinates in which the blow-up is given by $(x,y) = (x', x' y')$, and stepping left corresponds to taking local coordinates in which the blow-up is given by $(x,y) = (x' y', y')$. Let us define the following matrix notation for the exponents of a composition of blow-ups in this type of coordinates:
    \[
    (x, y) = (\widetilde{x}^\alpha \widetilde{y}^\beta, \widetilde{x}^\gamma \widetilde{y}^\delta) \quad \text{corresponds to the matrix} \quad \begin{pmatrix} \alpha & \beta \\ \gamma & \delta \end{pmatrix}.
    \]

    The composition of $q_1$ blow-ups of the form $(x,y) = (x', x' y')$, followed by $q_2$ blow-ups of the form $(x,y) = (x' y', y')$, and so on corresponds to the following product of matrices:
    \begin{equation*}
    M = \begin{pmatrix}
        1 & 0 \\ q_1 & 1
    \end{pmatrix}
    \begin{pmatrix}
        1 & q_2 \\ 0 & 1
    \end{pmatrix}
    \begin{pmatrix}
        1 & 0 \\ q_3 & 1
    \end{pmatrix}
    \cdots
    \begin{pmatrix}
        1 & 0 \\ q_n & 1
    \end{pmatrix},
    \end{equation*}
    where the last matrix has to be transposed if $n$ is even. Note that if $n$ is odd, the equation of the last exceptional divisor is $\widetilde{x} = 0$, whereas if $n$ is even the equation is $\widetilde{y} = 0$. To avoid this ambiguity, we convene that if $n$ is even then will swap the variables after the last blow-up, so that the equation of last exceptional divisor is always $\widetilde{x} = 0$. This alters the matrix of exponents by swapping the columns, i.e. by a multiplication on the right by the $2\times2$ matrix which has $1$'s on the anti-diagonal and $0$'s on the diagonal. Taking this into account, we see that the matrix of exponents of the minimal composition of blow-ups that makes $E_{(\kappa,r)}$ appear, with the convention of swapping the last variables to ensure that the equation of last exceptional divisor is always $\widetilde{x} = 0$, is
    \begin{equation} \label{kappa-r-b-a-matrix}
    M \begin{pmatrix}
        0 & 1 \\ 1 & 0
    \end{pmatrix}^{n+1} = 
    \begin{pmatrix}
        0 & 1 \\ 1 & 0
    \end{pmatrix}
    \begin{pmatrix}
        q_1 & 1 \\ 1 & 0
    \end{pmatrix}
    \begin{pmatrix}
        q_2 & 1 \\ 1 & 0
    \end{pmatrix}
    \cdots
    \begin{pmatrix}
        q_n & 1 \\ 1 & 0
    \end{pmatrix}
    =
    \begin{pmatrix}
        r & a \\ \kappa & b
    \end{pmatrix},
    \end{equation}
    where the second equality comes from the Euclidean algorithm in matrix form, and $a, b$ are simply integers coming from the matrix product. Taking determinants shows $a\kappa - br = (-1)^n$, which determines $a$ and $b$ uniquely by the B{\'e}zout identity.
\end{proof}

\noindent
With this notation in hand, we are ready to describe the steps of the resolution algorithm.

\textit{$i$-th iteration of the resolution algorithm:}

\begin{enumerate}[label=(\arabic{*}), start=0, wide]
    \item \label{resol-starting-situation} \textbf{Starting situation.}
    Recall that at the beginning we fixed a coordinate system $(x_0, y_0)$ for each branch $C^{[j]}$, whose axes we called $L_0, L_0'$. We also assumed that these coordinates are equal for branches with the same tangent direction. After the $(i-1)$-st step of the resolution, we have the following data:
    \begin{itemize}
        \item A proper birational morphism $\mu_{i-1}: Y_{i-1} \to \C^2$ such that the strict transform of each branch $C^{[j]}$ intersects exactly one exceptional divisor, which we call $R_{i-1} = R_{i-1}^{[j]}$, at a point $Q_{i-1}=Q_{i-1}^{[j]}$.
    
        \item For each branch $C^{[j]}$, a coordinate system $(x_{i-1}, y_{i-1})$ around $Q_{i-1}$ such that $R_{i-1} = L_{i-1}' \coloneqq \{x_{i-1} = 0\}$. If the strict transform $\mu_{i-1,*}^{-1}(C^{[j]})$ is \emph{not} tangent to $R_{i-1}$, then the coordinates are chosen so that $\mu_{i-1,*}^{-1}(C^{[j]})$ is tangent to $L_{i-1} \coloneqq \{y_{i-1} = 0\}$.
    \end{itemize}
    
    \begin{figure}[ht] 
        \centering
        \makebox[\textwidth][c]{\resizebox{0.6\linewidth}{!}{%
    \fontsize{20pt}{20pt}\selectfont
    \def\svgheight{0.5cm}
    \import{./figures/}{res-algorithm-step-before.pdf_tex}
}}
        \caption{Sketch of the situation before the $i$-th iteration of the resolution algorithm.}
        \label{fig:res-algorithm-step-before}
    \end{figure}

    \item \textbf{Blow-ups.}
    For each $j = 1,\ldots,b$, if the strict transform $\mu_{i-1,*}^{-1}(C^{[j]})$ is smooth and does not intersect the strict transform of any other branch, then the resolution algorithm is done with $C^{[j]}$. Otherwise, we repeatedly blow-up the intersection of $L_{i-1}$ and $L_{i-1}'$ and the new divisors that appear (i.e. we do blow-ups with exceptional divisors between $L_{i-1}$ and $L_{i-1}'$ in the sense of Definition \ref{divisors-between}). The divisors that appear in this way get labeled by a pair of coprime numbers $(\kappa, r)$ as in Proposition \ref{blowup-formula}.
    
    After a finite number of blow-ups, whose composition we denote by $\widetilde{\mu}_i: Y_i \to Y_{i-1}$ and $\mu_i \coloneqq \mu_{i-1} \circ \widetilde{\mu}_i$, each strict transform $\mu_{i,*}^{-1}(C^{[j]})$ intersects only one exceptional divisor, which we call $R_i=R_i^{[j]}$. In fact, $R_i$ is precisely the divisor $E_{(\hkappa_i, \hr_i)}$ between $L_{i-1}$ and $L_{i-1}'$, see \cite[p. 522-524]{brieskorn1986}, \cite[Th. 5.3.14]{dejong2000}.
    
    Note that if $Q_{i-1}^{[j]} = Q_{i-1}^{[j']}$, then some of the divisors $E_{(\kappa, r)}^{[j]}$ are common to both branches $C^{[j]}$ and $C^{[j']}$ (see \cite[p. 528]{brieskorn1986} for a different interpretation of the same phenomenon). Indeed, we have $E_{(0,1)}^{[j]} = E_{(0,1)}^{[j']} = R_{i-1}$ and $E_{(1,1)}^{[j]} = E_{(1,1)}^{[j']}$ is the exceptional divisor of the blow-up at $Q_{i-1}$. Therefore, for all coprime pairs $(\kappa, r)$ with $\kappa/r \leq 1$ we also have $E_{(\kappa,r)}^{[j]} = E_{(\kappa,r)}^{[j']}$. On the other hand, $E_{(1,0)}^{[j]}$ is the strict transform of $L_{i-1}^{[j]}$, which is different for each tangent direction at $Q_{i-1}$. 
    
    To summarize, the exceptional divisors of $\widetilde{\mu}_i$ form a chain from $E_{(0,1)}$ to $E_{(1,1)}$, which then splits into several chains of divisors, one for each tangent direction of $\mu_{i-1,*}^{-1}(C)$ at $Q_{i-1}$, see Figure \ref{fig:res-algorithm-step-before} and Figure \ref{fig:res-algorithm-step-after}.
    
    \begin{figure}[ht] 
        \centering
        \makebox[\textwidth][c]{\resizebox{0.8\linewidth}{!}{%
    \fontsize{20pt}{20pt}\selectfont
    \def\svgheight{0.5cm}
    \import{./figures/}{res-algorithm-step-after.pdf_tex}
}}
        \caption{Sketch of the part dual graph of $\mu_i$ that has been added during the $i$-th iteration, i.e. the dual graph of $\widetilde{\mu}_i$. The dashed lines have been added to show where the coordinate axes end up.}
        \label{fig:res-algorithm-step-after}
    \end{figure}

    \item \textbf{New coordinates.}
    To get back to the starting situation we just need to give the coordinate system $(x_i, y_i)$. Let $L_i' = R_i$. If $\mu_{i,*}^{-1}(C^{[j]})$ is \emph{not} tangent to $R_i$ let $L_i$ be a smooth curve tangent to $\mu_{i,*}^{-1}(C^{[j]})$, and otherwise let $L_i$ be any smooth curve transverse to $L_{i}'$.
    
    In the local coordinates $(\widetilde{x}, \widetilde{y})$ given by Proposition \ref{blowup-formula}, we have $R_i = L_i' = \{\widetilde{x} = 0\}$. We may assume that $L_i$ is the line $\widetilde{y} = A_i+B_i\widetilde{x}$ for some $A_i,B_i \in \C$. Thus, after the change of coordinates $(x_i,y_i)=(\widetilde{x},\widetilde{y}-A_i-B_i\widetilde{x})$, the coordinate axes for $(x_i, y_i)$ are $L_i$ and $L_i'$.
    
    We remark that $A_i \neq 0$ because by assumption $\mu_{i,*}^{-1}(C^{[j]})$ intersects a unique exceptional divisor. Furthermore, if $\mu_{i,*}^{-1}(C^{[j]})$ is tangent to $R_i$, then we have some freedom in the choice of the direction of $L_i$, which is reflected in $B_i$ (cf. \cite[Definition 1.3.31]{Barroso2020} for a similar situation). Either way, the coordinates $(x_i, y_i)$ are related to the previous ones by the formula
    \begin{equation} \label{next-rupture-component}
        \begin{pmatrix}
        x_{i-1} \\[3pt] y_{i-1}
        \end{pmatrix} = 
        \begin{pmatrix}x_i^{\hr_i} \left(A_i + B_i x_i + y_i\right)^{a_i} \\[3pt] x_i^{\hkappa_i} \left(A_i + B_i x_i + y_i\right)^{b_i}
        \end{pmatrix},
    \end{equation}
    by Proposition \ref{blowup-formula}, where $a_i, b_i$ are the unique integers with 
    \[
    a_i \hat{\kappa}_i - b_i \hat{r}_i = (-1)^{n_i}, \quad 0 \leq a_i < \hat{r}_i, \quad 0 \leq b_i < \hat{\kappa}_i,
    \]
    and $n_i$ is the number of divisions in the Euclidean algorithm of $(\hkappa_i, \hr_i)$. Note that the coordinates $(x_i, y_i)$ depend only on the tangent direction of $\mu_{i-1,*}^{-1}(C^{[j]})$ at $Q_{i-1}$.
\end{enumerate}

After finitely many iterations of steps (0), (1) and (2), this process finishes, and we obtain the minimal embedded resolution of $(C,0)$. The extra divisors that appear in the minimal $m$-separating log resolution come from further blowing up the intersection of exceptional divisors and strict transforms in the minimal resolution, so they can also be described in terms of the $(\kappa,r)$ labels associated to the coordinates $(x_i, y_i)$.

Now that we have described the resolution, let us define some concepts that will be useful in the following sections.

\begin{definition} \label{newton-pairs-E}
    Let $E \in \mathcal{E}$ be a divisor in the minimal $m$-separating log resolution $\mu: Y \to \C^2$ (or more generally any divisor that appears after a finite sequence of blow-ups $\mu: Y \to \C^2$). Choose a smooth curve germ $\widetilde{D} \subset Y$ that intersects $E$ transversely and meets no other divisor. The direct image $D \coloneqq \mu_*\widetilde{D}$ is an irreducible germ of a plane curve, and hence it has some Newton pairs associated to it (see the beginning of this section). These pairs do not depend on the choice of $\widetilde{D}$; we call them the \emph{Newton pairs} associated to $E$ and denote them by $(\hkappa_i^E, \hr_i^E)$, $i = 1,\ldots,g^E$. We also define $r_i^E, \kappa_i^E, k_i^E$ and $R_i^E$ for $i \leq g^E$, and $Q_i^E$ for $i \leq g^E - 1$, to be equal to the corresponding data of $D$, see Definition \ref{characteristic-exponents} and Step \ref{resol-starting-situation} of the resolution algorithm. Note that $R_{g^E}^E = E$. The point $Q_i^E$ is not defined for $i = g^E$ because it would depend on the choice of $D$. We fix the notation $(\hkappa^E, \hr^E) \coloneqq (\hkappa^E_{g^E}, \hr^E_{g^E})$, since we will use those numbers frequently.
    % From these we can define
    % \begin{align*}
    %     r_{g^E + 1}^E \coloneqq 1, \quad r_i^E \coloneqq \hr_i^E r_{i+1}^E, \quad \kappa_i^E \coloneqq \hkappa_i^E r_{i+1}^E, \quad k_i^E = \kappa_1^E + \cdots + \kappa_i^E, \quad i = 1,\ldots,g^E,
    % \end{align*}
    % which of course coincide with the corresponding data of $D$. {\color{red} (Is there a reason why these invariants are not defined as usual?) WHAT DO YOU MEAN BY ``AS USUAL''?.}
    
    %Also note that $Q_{g^E}^D$ depends on the choice of $\widetilde{D}$.
    Finally, the resolution of $D$ gives rise to coordinates around each $Q_i^E$ which we denote by $(x_i^E, y_i^E)$, $i = 1, \ldots, g^E - 1$, and they are related with each other by equation \eqref{next-rupture-component}. We also introduce coordinates $(x_{g^E}^E, y_{g^E}^E)$ using directly the change of coordinates given by Proposition \ref{blowup-formula},
    \begin{equation} \label{last-blow-up-E}
    (x_{g^E-1}^E, y_{g^E-1}^E) = \left((x_{g^E}^E)^{\hr^E}(y_{g^E}^E)^{a_{g^E}^E},\ (x_{g^E}^E)^{\hkappa^E}(y_{g^E}^E)^{b_{g^E}^E}\right).
    \end{equation}
    We do not shift these coordinates because, unlike in the previous cases, the point $Q_i^E$ is not defined for $i = g^E$.
\end{definition}

\begin{remark}
    Note that the Newton pairs associated to $E \in \mathcal{E}$ are \emph{not} enough to distinguish $E$ from the other divisors --- in order to do that we need the extra information of the points that are being blown up and the coordinate axes that are being used to define the $(\kappa,r)$ coordinates. % Nevertheless, the Newton pairs are enough to compute the data we need.
\end{remark}

We finish the subsection with a few definitions that will be useful when we want to refer to the different parts of the resolution graph, see Figure \ref{fig:example-resolution} for an example.

\begin{definition}
\label{parts-resolution-graph}
\begin{enumerate}[label=(\roman{*}), wide] 
    \item We say $R \in \mathcal{E}$ is a \emph{rupture divisor} if it intersects at least three other divisors in $\mathcal{E} \cup \mathcal{S}$, i.e. if the corresponding vertex in the dual graph of $\mu^*(C)$ has valence greater or equal than $3$. We denote by $\mathcal{R} \subset \mathcal{E}$ the set of rupture divisors. Among these, we denote by $\mathcal{R}_1 \subset \mathcal{R}$ the set of rupture divisors $R$ such that $(\hkappa^R, \hr^R) = (1,1)$, and $\mathcal{R}_0 \coloneqq \mathcal{R} \setminus \mathcal{R}_1$. Note that $\mathcal{R}_0$ is the set of divisors which are also rupture divisors in the minimal log resolution of some branch $C^{[j]}$.

    \item Let $R \in \mathcal{R}$ and consider a simple path starting at $R$ in the dual graph of $\mu^*(C)$, i.e. a sequence of pairwise distinct divisors $R = E_0,E_1,\dots,E_n \in \mathcal{E} \cup \mathcal{S}$ such that $E_{i} \cap E_{i+1} \neq \varnothing$ for every $i=0,\ldots,n-1$. Suppose that $\mathrm{val}(E_i) = 2$ for $i = 1, \ldots, n-1$ and $\mathrm{val}(E_n) \neq 2$, where $\mathrm{val}(E)$ denotes the valence of the vertex $E$. There are two possible situations.
    \begin{enumerate}[label=(\arabic{*}), leftmargin=2cm]
        \item If $\mathrm{val}(E_n) = 1$ and $E_n$ is not a strict transform, we say that the path is an \emph{end} of the dual graph, the vertex $E_n$ is called a \emph{leaf}, and we denote by $\mathcal{F}_R \subset \mathcal{E}$ the set of divisors in the path between the rupture divisor $R$ and the leaf, \emph{including} both of them. If there are no ends attached to $R$ we simply set $\mathcal{F}_R = \{R\}$. See Lemma \ref{end-well-defined} for a proof that there is (almost always) at most one end for each $R$, so that $\mathcal{F}_R$ is well-defined, and a necessary small adjustment to the definition in the edge case where there are two ends instead of one. 

        % \vspace{0.1cm}
        % {\setlength{\parindent}{15pt}
        % There is an exceptional case that appears when all branches $C^{[j]}$ share the same first rupture divisor, that is $R_1^{[j]} = R_1^{[j']}$ for all $j, j' = 1, \ldots, b$. In this case, the divisor $R = R_1^{[j]}$ has two ends coming out of it (once again, see Figure \ref{fig:example-resolution}). We will write $\mathcal{F}_{R,1}$ and $\mathcal{F}_{R,2}$ for the divisors in each of the ends when we need to distinguish between them, and we let $\mathcal{F}_{R} \coloneqq \mathcal{F}_{R,1} \cup \mathcal{F}_{R,2}$.
        % }

        \item If $E_n$ is a rupture divisor or a strict transform, i.e. $E_n = S \in \mathcal{R} \cup \mathcal{S}$, then we say that the path is a \emph{trunk} of the dual graph, and denote by $\mathcal{T}_{R,S} \subset \mathcal{E}$ the set of divisors in the path, \emph{not} including either $R$ or $S$. If $R, S \in \mathcal{R} \cup \mathcal{S}$ are not connected by a path of divisors of valence $2$, we set $\mathcal{T}_{R,S} = \varnothing$. We denote by $\mathcal{N}_R$ the set of $S \in \mathcal{R} \cup \mathcal{S}$ such that $R$ is connected to $S$ by a path of divisors of valence $2$, and call the $S \in \mathcal{N}_R$ the \emph{neighbors} of $R$.
    \end{enumerate}

    \item \label{special-case} If some divisors in an end $\mathcal{F}_R$ are $m$-divisors, there is one of them which is closest to $R$ (which will be $R$ itself if $R$ is an $m$-divisor). We denote this divisor by $E_{\dom}(m,R)$. The notation ``dom'' comes from the fact that arcs lifting to this divisor dominate all arcs lifting to $\mathcal{F}_R$, see Theorem \ref{arc-contact-loci-decomposition}. This is also well defined in the special case explained in Lemma \ref{end-well-defined}, i.e. when all branches of $C$ share the same divisor $R_1^{[j]}$, since in this case either $R_1^{[j]}$ is an $m$-divisor or there are $m$-divisors in at most one of the two ends, see \cite[Proposition 7.16]{budur2022nash} or Remark \ref{special-case-multiplicities}. 

    We denote by $\mathcal{F}_{m,R}$ set of divisors in the path from $E_\dom(m,R)$ to the leaf of $\mathcal{F}_R$. If $R$ is an $m$-divisor then $\mathcal{F}_{m,R} = \mathcal{F}_R$ (we use this as a definition in the exceptional case of a shared $R_1^{[j]}$). If there are no $m$-divisors in $\mathcal{F}_R$, we set $\mathcal{F}_{m,R} = \varnothing$.
\end{enumerate}
\end{definition}

\begin{lemma} \label{end-well-defined}
    Let $R \in \mathcal{R}$ be a rupture divisor in the minimal $m$-separating resolution.
    \begin{enumerate}[label=(\roman{*})]
        \item If $R \in \mathcal{R}_1$, then there are no ends coming out of $R$, i.e. $\mathcal{F}_R = \{R\}$.
        \item \label{special-case-ends} If $R$ is the first rupture divisor for all branches $C^{[j]}$, i.e. $R = R_1^{[j]}$ for all $j = 1, \ldots, b$, then there are two ends coming out of $R$. We denote them by $\mathcal{F}_{R,1}$ and $\mathcal{F}_{R,2}$ (the order is irrelevant for us).
        \item In any other case, there is at most one end coming out of $R$, so the notation $\mathcal{F}_R$ is well defined.
    \end{enumerate} 
\end{lemma}

\begin{proof}
    If $R \in \mathcal{R}_1$, there are two possible situations. If $R$ is the root of the resolution tree (i.e. the exceptional divisor of the very first blow-up), then every path starting from $R$ corresponds to a tangent direction of $C$ at the origin. Therefore for every path starting from $R$ there is at least one branch $C^{[j]}$ whose first rupture component $R_1^{[j]}$ lies in that path, and hence the path is not an end. 
    
    If $R \in \mathcal{R}_1$ is not the root of the tree, then let $Q$ be the point whose blow-up is $R$. For every tangent direction of (the strict transform of) $C$ at $Q$ there is a path starting from $R$, and the same argument as before applies. There is also one other path starting at $R$, namely the one joining $R = E_{(1,1)}$ with the divisor $E_{(0,1)}$ (this is the divisor in which $Q$ lies). Since $E_{(0,1)}$ is a rupture divisor (by definition), this path is not an end either. This proves (i).

    Now let $R \in \mathcal{R}_0$. Since we are working with the minimal $m$-separating resolution there have been no unnecessary blow-ups in the resolution process, so there is a branch $C^{[j]}$ and an integer $i = 1, \ldots, g^{[j]}$ such that $R = R_i^{[j]}$ (these $C^{[j]}$ and $i$ are in general not unique). In particular, $R$ is a divisor between $L_{i-1}^{[j]}$ and $L_{i-1}^{\prime[j]}$, so we have two distinguished paths coming out of $R$: the ones going to $L_{i-1}^{[j]}$ and $L_{i-1}^{\prime[j]}$. 
    
    Any path starting from $R$ other than these two comes from the resolution of some branch, so the path eventually reaches another rupture component or a strict transform, and hence is not an end. The two distinguished paths may or may not be ends depending on whether there are other rupture components along them. In fact, $L_{i-1}^{\prime[j]}$ is always a rupture component if $i > 1$, so the only case in which both paths are ends is when $i = 1$ and there are no other rupture divisors between $L_0$ and $L_0'$, i.e. when $R$ is the first rupture divisor for all branches of $C$. This proves (ii) and (iii).
\end{proof}

% \begin{definition} \label{gcds}
%     Let $R, S \in \mathcal{R} \cup \mathcal{S}$ be neighbors and let $E, F \in \mathcal{T}_{R,S} \cup \{R, S\}$ be two adjacent divisors, that is, such that $E \cap F \neq \varnothing$. We define $d(\mathcal{T}_{R,S})$ to be the greatest common divisor of $N_E$ and $N_F$. This does not depend on the choice of $E$ and $F$ but only on the trunk $\mathcal{T}_{R,S}$, see the proof of Proposition \ref{acampo-pieces} below.

%     Similarly, if $E, F \in \mathcal{F}_R$ are two adjacent divisors we define $d(\mathcal{F}_R) \coloneqq \gcd(N_E,N_F)$, which does not depend on $E$ or $F$ either, see \textit{loc. cit.} In the exceptional case where $R$ has two ends attached to it, we define $d(\mathcal{F}_{R,1})$ and $d(\mathcal{F}_{R,2})$  analogously.

%     Finally, for each rupture divisor $R \in \mathcal{R}$ we define $d(R)$ to be the greatest common divisor of $N_R$ and all the $N_E$, where $E$ runs through the divisors adjacent to $R$.
% \end{definition}

\begin{definition}
    Let $E$ be a divisor in $\mathcal{E} \cup \mathcal{S}$. We define $c(E)$ to be the greatest common divisor of all the multiplicities of $f$ at the divisors that meet $E$ (including $E$ itself), that is
    \[
    c(E) \coloneqq \gcd(N_F \mid F \in \mathcal{E} \cup \mathcal{S}, \ E \cap F \neq \varnothing).
    \]
\end{definition}

\begin{proposition} \label{cE-independence}
    \begin{enumerate}[(i)]
        \item \label{cE-simply} Let $E \in \mathcal{E} \cup \mathcal{S}$. Then $N_E$ divides $\sum_{\substack{F \in (\mathcal{E} \cup \mathcal{S}) \setminus \{E\}, E \cap F \neq \varnothing}} N_F$, the sum of the multiplicities of all divisors that intersect $E$. In particular, removing one divisor $E' \neq E$ from the computation of $c(E)$ does not change the result%, and pick a divisor $E' \neq E$ which intersects $E$. Then $E'$ does not need to be considered in the computation of $c(E)$
        , i.e.
        \[
        c(E) = \gcd(N_F \mid F \in (\mathcal{E} \cup \mathcal{S}) \setminus \{E'\}, \ E \cap F \neq \varnothing).
        \]
        \item \label{cE-independence-trunk} If $R,S \in \mathcal{R} \cup \mathcal{S}$ are neighbors, and the divisors $E \in \mathcal{T}_{R,S}$ and $F \in \mathcal{T}_{R,S} \cup \{R,S\}$ have nonempty intersection, then $c(E)=\gcd(N_E,N_F)$. Hence for any two $E,F \in \mathcal{T}_{R,S}$ we have $c(E) = c(F)$ (recall that the trunk $\mathcal{T}_{R,S}$ does \emph{not} include the rupture divisors $R, S$).
        
        \item If $R \in \mathcal{R}$ and $E \in \mathcal{F}_R \setminus \{R\}$, then $c(E)=N_L$, where $L$ is the leaf of $\mathcal{F}_R$.
    \end{enumerate}
\end{proposition}
\begin{proof}
    Note that (ii) and (iii) are immediate from (i). To prove (i), we just need to observe that $0 = \mathrm{div}(f \circ \mu) \cdot E = N_E E^2 + \sum_{\substack{F \in (\mathcal{E} \cup \mathcal{S}) \setminus \{E\}, E \cap F \neq \varnothing}} N_F$. % In particular, $N_E$ divides $\sum_{\substack{F \in (\mathcal{E} \cup \mathcal{S}) \setminus \{E\}, E \cap F \neq \varnothing}} N_F$, so the assertion follows.
\end{proof}

% \begin{proof}
%     Let $E_0,E_1,\dots,E_n$ be the elements of $\mathcal{F}_R$ or $\mathcal{T}_{R,S}$, ordered by increasing distance to the root of the tree. Note that for every $i = 0, \ldots, n$, we have $\mathrm{div} (f \circ \mu) \cdot E_i = 0$. Applying this to each $i=1,\dots,n-1$ we get,
%     \[
%     0 = \mathrm{div} (f \circ \mu) \cdot E_i = \left(\sum_{E \in \mathcal{E} \cup \mathcal{S}} N_E E\right) \cdot E_i = N_{E_{i-1}} + N_{E_i}E_i^2 + N_{E_{i+1}}.
%     \]
%     Therefore
%     \[
%     c(E_i) = \gcd(N_{E_{i-1}},N_{E_i},N_{E_{i+1}}) = \gcd(N_{E_{i-1}},N_{E_i}) = \gcd(N_{E_i},N_{E_{i+1}}),
%     \]
%     which proves (i).

%     In the situation (ii), the divisor $E_n$ is the leaf of $\mathcal{F}_R$, and we also have
%     \[
%     0 = \mathrm{div} (f \circ \mu) \cdot E_n = N_{E_n}E_n^2+N_{E_{n-1}}.
%     \]
%     Hence $N_{E_n}$ divides $N_{E_{n-1}}$ and $c(E_{n-1}) = \gcd(N_{E_n},N_{E_{n-1}}) = N_{E_n}$.
% \end{proof}

This motivates the following definition:

\begin{definition} \label{gcds}
    \begin{enumerate}[(i)]
        \item For $R,S \in \mathcal{R} \cup \mathcal{S}$ neighbors, we define $c(\mathcal{T}_{R,S})$ to be equal to $c(E)$ for any divisor $E \in \mathcal{T}_{R,S}$.
        \item For $R \in \mathcal{R}$, we define $c(\mathcal{F}_R)$ to be equal to $c(E)$ for any divisor $E \in \mathcal{F}_R \setminus \{R\}$.
    \end{enumerate}
\end{definition}

\begin{definition} \label{left-right-up}
    Given an exceptional divisor $E \in \mathcal{E}$, we split the strict transforms of the branches of $C$ into several groups as follows. Let $\Gamma$ be the dual graph of $\mu^\ast(C)$, and denote by $\Gamma \setminus E$ the graph obtained by deleting the vertex corresponding to $E$.

    \begin{enumerate}[(i)]
        \item If $E$ is not the root of $\Gamma$ (i.e. the vertex corresponding to the exceptional divisor of the first blow-up), then we call the connected component of $\Gamma \setminus E$ that contains the root the \emph{left} component of $\Gamma \setminus E$. We say that the $\widetilde C^{[j]}$ that lie on the left component of $\Gamma \setminus E$ are \emph{to the left} of $E$. We denote $\mathcal{S}^E_\leftarrow \coloneqq \{j \in \{1,\ldots,b\} \mid \widetilde C^{[j]} \text{ lies to the left of } E\}$.

        \item Any component of $\Gamma \setminus E$ containing divisors $F$ with $R^F_{g^E - 1} = R^E_{g^E - 1}$ and $\kappa^F_{g^E}/r^F_{g^E} > \kappa^E_{g^E}/r^E_{g^E}$ is said to be a \emph{right} component. Note that if $E \not\in \mathcal{R}_1$, then $\Gamma \setminus E$ has exactly one right component. The $\widetilde C^{[j]}$ that lie on a right component of $\Gamma \setminus E$ are said to be \emph{to the right} of $E$. We denote $\mathcal{S}^E_\rightarrow \coloneqq \{j \in \{1,\ldots,b\} \mid \widetilde C^{[j]} \text{ lies to the right of } E\}$.

        \item Let $U^E$ be the set of connected components of $\Gamma \setminus E$ different from the left and right components that we just defined. Clearly $U^E \neq \varnothing$ if and only if $E \in \mathcal{R}_0$. We denote $\mathcal{S}^E_u \coloneqq \{j \in \{1,\ldots,b\} \mid \widetilde C^{[j]} \text{ lies in the component } u \in U^E\}$. If $j \in \mathcal{S}^E_u$ for some $u \in U^E$ we say that $\widetilde C^{[j]}$ lies \emph{above} $E$.
    \end{enumerate}
    Note that these three cases are mutually exclusive and exhaustive, i.e. for a fixed exceptional divisor $E$ every strict transform $\widetilde C^{[j]}$ lies in exactly one connected component of $\Gamma \setminus E$.
\end{definition}

\begin{example}
    To illustrate the concepts that we have defined, consider the plane curve branches given by the following parametrizations:
    \begin{align*}
        C^{[1]} &\equiv (\tau^{42}, 2\tau^{105} + \tau^{133} + \tau^{136}), & 
        C^{[2]} &\equiv (\tau^6, \tau^{15} + \tau^{17}), \\
        C^{[3]} &\equiv (\tau^4, \tau^{10} + \tau^{17}), &
        C^{[4]} &\equiv (\tau^6(1 + \tau^3 + \tau^5), \tau^{15}(1 + \tau^3 + \tau^5)^2),
    \end{align*}
    and let $C = C^{[1]} \cup C^{[2]} \cup C^{[3]} \cup C^{[4]}$. The parametrizations have been chosen so that the resolution graph of $C$ looks like Figure \ref{fig:example-resolution}.

    Using a computer algebra software, it is possible to find a change of coordinates so that the branch $C^{[4]}$ also has a parametrization of the form $(\widetilde{\tau}^{r_1}, \varphi(\widetilde{\tau}))$. We did this with the following SageMath code: 
    \begin{verbatim}
        sage: R.<t> = PowerSeriesRing(QQ)
        sage: gx = t^6 * (1 + t^3 + t^5) + O(t^15)
        sage: r = gx.nth_root(6)
        sage: r_rev = r.reverse()
        sage: gx(t = r_rev)
        t^6 + O(t^15)
        sage: gy = t^15 * (1 + t^3 + t^5)^2
        sage: gy(t = r_rev)
        t^15 - 1/2*t^18 - 1/2*t^20 + 5/8*t^21 + 17/12*t^23 + O(t^24)
    \end{verbatim}
    The resulting parametrization is $C^{[4]} \equiv (\widetilde{\tau}^6, \widetilde{\tau}^{15} - \frac{1}{2} \widetilde{\tau}^{18} - \frac{1}{2} \widetilde{\tau}^{20} + \cdots)$. From here we get the characteristic exponents and Newton pairs of all branches.

    \begin{center}
    \begin{tabular}{c|c}
        {$\begin{aligned}
            k_1^{[1]} &= 105,\ & k_2^{[1]} &= 133,\ & k_3^{[1]} &= 136 \\
            \kappa_1^{[1]} &= 105,\ & \kappa_2^{[1]} &= 28,\ & \kappa_3^{[1]} &= 3 \\
            r_1^{[1]} &= 42,\ & r_2^{[1]} &= 21,\ & r_3^{[1]} &= 7 \\
            (5&,2) & (4&,3) & (3&,7)
        \end{aligned}$} \quad & \quad {$\begin{aligned}
            k_1^{[2]} &= 15,\ & k_2^{[2]} &= 17 \\
            \kappa_1^{[2]} &= 15,\ & \kappa_2^{[2]} &= 2 \\
            r_1^{[2]} &= 6,\ & r_2^{[2]} &= 3 \\
            (5&,2) & (2&,3)
        \end{aligned}$} \\[-1em] \\
        \hline
        \\[-0.3cm]
        {$\begin{aligned}
            k_1^{[3]} &= 10,\ & k_2^{[3]} &= 17 \\
            \kappa_1^{[3]} &= 10,\ & \kappa_2^{[3]} &= 7 \\
            r_1^{[3]} &= 4,\ & r_2^{[3]} &= 2 \\
            (5&,2) & (7&,2)
        \end{aligned}$} \quad & \quad {$\begin{aligned}
            k_1^{[4]} &= 15,\ & k_2^{[4]} &= 20 \\
            \kappa_1^{[4]} &= 15,\ & \kappa_2^{[4]} &= 5 \\
            r_1^{[4]} &= 6,\ & r_2^{[4]} &= 3 \\
            (5&,2) & (5&,3)
        \end{aligned}$}
    \end{tabular}
    \end{center}

    In order to draw the resolution graph we also need to know how each strict transform intersects the rupture components during the resolution process (this is the data of the $A_i, B_i \in \C$ that appear in \eqref{next-rupture-component}). We chose the parametrizations so that $A_1^{[1]} = 4$, but $A_1^{[2]} = A_1^{[3]} = A_1^{[4]} = 1$ (hence $C^{[1]}$ meets $R_1$ at a different point than the other branches) and $B_1^{[2]} = B_1^{[3]} = 0$, but $B_1^{[4]} = 1$ (hence $C^{[4]}$ meets $R_1$ at the same point as $C^{[2]}, C^{[3]}$ but with a different tangent direction). 

    \begin{figure}[ht] 
        \centering
        \makebox[\textwidth][c]{\resizebox{0.8\linewidth}{!}{%
    \fontsize{20pt}{20pt}\selectfont
    \def\svgheight{0.5cm}
    \import{./figures/}{example-resolution.pdf_tex}
}}
        \caption{Minimal log resolution of the curve $C$. The rupture components are \textcolor{myblue}{blue}, the strict transforms are \textcolor{mypink}{pink}, the ends are \textcolor{myorange}{orange} (and they include the corresponding rupture divisor) and the trunks are black.}
        \label{fig:example-resolution}
    \end{figure}

    With this knowledge, we draw Figure \ref{fig:example-resolution}. Each exceptional divisor has been labeled with its last Newton pair, i.e. $E$ has been labeled $(\hkappa^E, \hr^E)$. The drawing has colors to separate the different parts of the graph according to Definition \ref{parts-resolution-graph}. Note how each rupture divisor has at most one end coming out of it, except the divisor labeled $(5,2)$ on the bottom left, which is the divisor $R_1^{[j]}$ for all branches $C^{[j]}$, so this is an example of the special case described in Lemma \ref{end-well-defined}.\ref{special-case-ends}.

    We can also see the components which are left, right and above each exceptional divisor, according to Definition \ref{left-right-up}. For example, consider the divisor $E$ to which $\widetilde{C}^{[2]}$ is connected (labeled $(2,3)$). Then the branch $\widetilde C^{[1]}$ is to the left of $E$, the branch $C^{[2]}$ is above $E$, and branches $\widetilde C^{[3]}$ and $\widetilde C^{[4]}$ are to the right of $E$. On the other hand, let $R$ be the unique rupture component labeled $(1,1)$, then $\widetilde C^{[1]}$ and $\widetilde C^{[2]}$ are to the left of $R$, whereas $\widetilde C^{[3]}$ and $\widetilde C^{[4]}$ are to the right of $R$.
\end{example}

\subsection{Computing the resolution data}

A key ingredient in our computations is the combination of the above explicit description of an embedded resolution with the Puiseux decomposition. The following lemma, which is part of the proof of \cite[Proposition 7.10]{budur2022nash}, gives a formula for the lift of the Puiseux factors $y_0 - \varphi(\xi (x_0)^{1/\mult(f^{[j]})})$ to different steps of the resolution.

\begin{notation} \label{arc-notation}
    Let $h \in \C \llbracket x,y \rrbracket$ be a power series and let $\gamma(t) = (\alpha(t), \beta(t))$ be an arc or parametrized curve in the plane. We will denote by $h(\gamma(t))$, or even $h(t)$ when the arc is understood from context, the result of substituting the series $\alpha(t), \beta(t)$ in $h$. The parameter $t$ will be used for arcs, and the parameter $\tau$ will be reserved for the curve $C$.
\end{notation}

\begin{lemma} \label{pxi-decomposition}
    Fix an irreducible component $(C^{[j]},0)$ of $(C,0)$. Let $\varphi \in \C \llbracket t \rrbracket$ be the series in the Puiseux decomposition \eqref{puiseux-decomposition} and for each $\xi \in \bmu_{r_1}$ let $P_{\xi, 0} \coloneqq y_0 - \varphi(\xi x_0^{1/r_1})$, which is a power series in $x_0^{1/r_1}$ and $y_0$. Use \eqref{next-rupture-component} to see $x_i, y_i$ as polynomials in $x_{i'}, y_{i'}$ for all $0 \leq i \leq i' \leq g$. Then for each $i = 1, \ldots, g$ and each $\xi \in \bmu_{r_i}$ we may express 
    \[
    P_{\xi,0} = y_0 \cdots y_{i-1} P_{\xi,i},
    \]
    where $P_{\xi,i}$ is a power series in $x_i^{1/r_{i+1}}, y_i$. If $\xi \not\in \bmu_{r_{i+1}}$ then $P_{\xi,i}(0,0) \neq 0$. Otherwise, i.e. if $\xi \in \bmu_{r_{i+1}}$, % then every monomial $x_i^{\alpha/r_{i+1}} y_i^\beta$ that appears in $P_{\xi,i}$ with nonzero coefficient is a multiple of $x_i^{\kappa_{i+1}/r_{i+1}}$ or $y_i$, i.e. either $\alpha \geq \kappa_{i+1}$ or $\beta \geq 1$. 
    the only vertices in the Newton polygon of $P_{\xi,i}$ are those corresponding to $x_i^{\kappa_{i+1}/r_{i+1}}$ and $y_i$.
\end{lemma}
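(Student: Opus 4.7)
The plan is to proceed by induction on $i$, substituting the coordinate change \eqref{next-rupture-component} into the inductive expression for $P_{\xi, i-1}$. At each step, the goal is twofold: show that the result, viewed in the new coordinates $(x_i, y_i)$, is divisible by $y_{i-1} = x_i^{\hkappa_i}(A_i + B_i x_i + y_i)^{b_i}$; and identify the Newton polygon of the quotient $P_{\xi, i}$. The combinatorial identity that makes the bookkeeping work is $\kappa_i = \hkappa_i r_{i+1}$, which relates the $x_i$-exponent produced by substituting a monomial on the Newton polygon line of $P_{\xi, i-1}$ with the power $x_i^{\hkappa_i}$ one must peel off.

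For the base case $i = 1$, I would split the Puiseux series as $\varphi(\tau) = \psi(\tau) + \sum_{k \geq k_1,\, r_1 \nmid k} \varphi_k \tau^k$, where $\psi$ collects the $\xi$-independent terms (those with $r_1 \mid k$). The tangent condition $\{y_0 = 0\}$ forces $\varphi_{r_1} = 0$ and $k_1 > r_1$, so after substituting $(x_0, y_0) = (x_1^{\hr_1} U^{a_1}, x_1^{\hkappa_1} U^{b_1})$ with $U := A_1 + B_1 x_1 + y_1$, the only monomials of $P_{\xi, 0}$ landing at $x_1$-exponent exactly $\hkappa_1$ are $y_0$ itself and the lowest $\xi$-dependent Puiseux term $\varphi_{k_1} \xi^{k_1} x_0^{k_1/r_1}$. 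Dividing by $y_0$ produces $P_{\xi, 1}$ with $P_{\xi, 1}(0, 0) = 1 - \varphi_{k_1} \xi^{k_1} A_1^{a_1 k_1/r_1 - b_1}$. The shift $A_1$ is chosen precisely so that the strict transform of $C$ meets the new chart at the origin, which translates into $\varphi_{k_1} A_1^{a_1 k_1/r_1 - b_1} = 1$ and gives $P_{\xi, 1}(0, 0) = 1 - \xi^{k_1}$; this vanishes exactly when $\xi \in \bmu_{r_2}$, as required.

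For the inductive step, the induction hypothesis says that every monomial $x_{i-1}^{\alpha/r_i} y_{i-1}^\beta$ with nonzero coefficient in $P_{\xi, i-1}$ satisfies $\alpha/\kappa_i + \beta \geq 1$. Under the substitution \eqref{next-rupture-component} it becomes $x_i^{\alpha/r_{i+1} + \hkappa_i \beta} U^{a_i \alpha/r_i + b_i \beta}$, and multiplying the inequality by $\hkappa_i$ shows the $x_i$-exponent is at least $\hkappa_i$, with equality exactly for the two Newton polygon vertices. This gives divisibility by $y_{i-1}$. Evaluating $P_{\xi, i}$ at the origin, only the two vertex contributions survive, producing $P_{\xi, i}(0, 0) = 1 + c_x(\xi) A_i^{a_i \hkappa_i - b_i}$, where $c_x(\xi)$ is the coefficient of $x_{i-1}^{\kappa_i/r_i}$ in $P_{\xi, i-1}$. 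By the inductive description of this coefficient, $c_x(\xi)$ depends on $\xi$ only through $\xi^{k_i}$, and the maximal-contact choice of $A_i$ makes the product evaluate to $1 - \xi^{k_i}$, vanishing exactly on $\bmu_{r_{i+1}}$. A parallel analysis of the $y_i$-linear part and the next Puiseux contribution at exponent $k_{i+1}$ then identifies the new Newton polygon vertices $y_i$ and $x_i^{\kappa_{i+1}/r_{i+1}}$ in the case $\xi \in \bmu_{r_{i+1}}$.

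The main obstacle I expect is the precise bookkeeping of the unit factors $U$ and the shifts $A_j, B_j$ across all the substitutions. One must verify that at each step $j \leq i$, the Puiseux terms with $r_j \mid k$ and $k < k_{j+1}$ get absorbed into $A_j$ and $B_j$, so that the value $P_{\xi, i}(0, 0)$ ends up depending on $\xi$ only through $\xi^{k_i}$. Geometrically this reflects the fact that the maximal-contact curves $L_{j-1}$ approximate the branch to order $k_j$, which is exactly what pins down the coefficients $A_j, B_j$.
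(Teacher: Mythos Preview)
Your overall strategy—induct on $i$, verify divisibility by $y_{i-1}$ via a Newton-polygon weight argument, then identify $P_{\xi,i}(0,0)$ and the next Newton polygon by direct coefficient tracking—is sound, and the divisibility step matches the paper's (the paper phrases it as the observation that $x_{i-1}^{\kappa_i/r_i}/y_{i-1}$ is a power series in $x_i, y_i$). Where you diverge is in the second half: rather than chase the shifts $A_j, B_j$ through the coordinate changes, the paper substitutes the lifted parametrization $(x_i(\tau), y_i(\tau))$ of the strict transform of $C^{[j]}$ and uses the identity
\[
P_{\xi,0}(\tau) = \varphi(\tau) - \varphi(\xi\tau) = \sum_{k \geq k_1} (1-\xi^k)\,\varphi_k\,\tau^k
\]
together with $\ord_\tau y_j(\tau) = \kappa_{j+1}$ to compute $\ord_\tau P_{\xi,i}(\tau)$ by subtraction. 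The (non)vanishing of $P_{\xi,i}(0,0)$ and the location of the Newton polygon then fall out immediately, without any reference to the $A_j$; only at the very end does the paper write down a short recurrence to check that the two candidate vertex coefficients are nonzero.

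Your direct route works in principle, and your idea of using the $\xi = 1$ case (where $P_{1,i}$ vanishes along the strict transform) to pin down how the maximal-contact choice of $A_i$ relates to the Puiseux coefficients is exactly right. But it is precisely the delicate bookkeeping you flag in your final paragraph, and a couple of your formulas already show the fragility: the $U$-exponent contributed by the $x_{i-1}^{\kappa_i/r_i}$ vertex is $a_i\kappa_i/r_i - b_i = (-1)^{n_i}/\hr_i$, not $a_i\hkappa_i - b_i$; and for $i \geq 2$ the $y_{i-1}$-coefficient of $P_{\xi,i-1}$ is not $1$ but a nonzero product involving the earlier $A_j$. Neither slip breaks the vanishing criterion (everything rescales by the same nonzero factor), but they confirm that the parametrization trick is the cleaner path—it replaces all of this with a one-line computation in $\ord_\tau$.
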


\begin{proof}
    We proceed by induction on $i$, where the base case $i = 0$ is the trivial fact that for any $\xi \in \bmu_{r_1}$, the only vertices of the Newton polygon of $P_{\xi, 0}$ are the ones corresponding to $x_0^{\kappa_1/r_1}$ and $y_0$. Suppose that $\xi \in \bmu_{r_i}, 1 \leq i \leq g$. By the induction hypothesis $P_{\xi,0} = y_0 \cdots y_{i-2} P_{\xi,i-1}$, where $P_{\xi,i-1}$ is a power series in $x_{i-1}^{1/r_i}, y_{i-1}$ and 
    % every monomial with nonzero coefficient is a multiple of either $x_{i-1}^{\kappa_i/r_i}$ or $y_{i-1}$.
    the only vertices in the Newton polygon of $P_{\xi,i-1}$ are those corresponding to $x_{i-1}^{\kappa_i/r_i}, y_{i-1}$.
 
    From \eqref{next-rupture-component} it follows that
    \begin{equation} \label{common-factor}
    \dfrac{x_{i-1}^{\kappa_i/r_i}}{y_{i-1}} = \dfrac{x_i^{\hat{r}_i \kappa_i / r_i}(A_i + B_i x_i + y_i)^{a_i \kappa_i / r_i}}{x_i^{\hat{\kappa}_i}(A_i + B_i x_i + y_i)^{b_i}} = (A_i + B_i x_i + y_i)^{(-1)^{n_i} r_{i+1}/r_i},
    \end{equation}
    which is a power series in $x_i^{1/r_{i+1}}, y_i$ by the binomial theorem. 
    By the condition on the vertices of the Newton polygon of $P_{\xi,i-1}$, any monomial that appears in $P_{\xi,i-1}$ with a nonzero coefficient is a multiple of either $x_{i-1}^{\kappa_i/r_i}$ or $y_{i-1}$. Therefore we may divide $P_{\xi,i-1}$ by $y_{i-1}$ and conclude that $P_{\xi,i-1} = y_{i-1} P_{\xi,i}$ with $P_{\xi,i}$ a power series in $x_i^{1/r_{i+1}}, y_i$.

    Consider the parametrization $(x_i(\tau), y_i(\tau))$ of the strict transform of $C^{[j]}$ at the moment of the resolution in which it intersects $R_i$, which results from lifting the original parametrization, $(x_0(\tau), y_0(\tau)) = (\tau^{r_1}, \varphi(\tau))$, through the blow-ups \eqref{next-rupture-component}. Let $P_{\xi,i}(\tau)$ be the result of substituting that parametrization into $P_{\xi,i}$, see Notation \ref{arc-notation}. Then,
    \[
    P_{\xi, 0}(\tau) = \varphi(\tau) - \varphi(\xi \tau) = \sum_{k \geq k_1} (1 - \xi^k) \varphi_k \tau^k.
    \]
    Note that $\bmu_{r_i} = \bmu_{r_1} \cap \bmu_{k_1} \cap \cdots \cap \bmu_{k_{i-1}}$. If $\xi \not\in \bmu_{r_{i+1}}$ then $\ord_\tau P_{\xi,0}(\tau) = k_i$. It follows from the resolution algorithm \cite[p. 512-516]{brieskorn1986} that $\ord_\tau x_j(\tau) = r_{j+1}, \ \ord_\tau y_j(\tau) = \kappa_{j+1}$ for all $j = 0, \ldots, g$. One can also see this by noting that $\ord_\tau x_g(\tau) = 1$ (because the strict transform $\mu_*^{-1}(C^{[j]})$ is smooth and transverse to $R_g = \{x_g = 0\}$) and then applying \eqref{next-rupture-component} repeatedly. Thus
    \[
    \ord_\tau P_{\xi,i}(\tau) = \ord_\tau P_{\xi,0}(\tau) - \sum_{j=0}^{i-1} \ord_\tau y_j(\tau) = k_i - (\kappa_1 + \cdots + \kappa_i) = 0,
    \]
    meaning $P_{\xi,i}$ has nonzero constant term. On the other hand, if $\xi \in \bmu_{r_{i+1}}$ then $\ord_\tau P_{\xi, 0} \geq k_{i+1}$, and therefore $\ord_\tau P_{\xi, i} \geq k_{i+1} - (\kappa_1 + \cdots + \kappa_i) = \kappa_{i+1}$. 
    Note that 
    \[
    \ord_\tau (x_i^{\alpha/r_{i+1}} y_i^\beta)(\tau) = \alpha + \beta \kappa_{i+1},
    \]
    so the only monomials with $\ord_\tau < \kappa_{i+1}$ are those with $\beta = 0$ and $0 \leq \alpha < \kappa_{i+1}$, and each of them has a different order in $\tau$. We conclude that the coefficient of each of these monomials in $P_{\xi, i}$ is necessarily zero. Therefore, the last thing that we need to check is that $x_i^{\kappa_{i+1}/r_{i+1}}$ and $y_i$ appear with nonzero coefficient in $P_{\xi, i}$. 

    For each $0 \leq i' \leq i$, let $c_{\xi, i'}(\alpha, \beta)$ be the coefficient of the monomial $x_{i'}^{\alpha/r_{{i'}+1}} y_{i'}^\beta$ in $P_{\xi,{i'}}$. Applying \eqref{next-rupture-component} to each monomial of $P_{\xi,{i'}-1} / y_{{i'}-1}$, we obtain
    \begin{equation} \label{change-coord-monomial}
    \dfrac{x_{{i'}-1}^{\alpha/r_{i'}} y_{{i'}-1}^{\beta}}{y_{{i'}-1}} = x_{i'}^{\frac{\alpha}{r_{i'}} \hr_{i'} + (\beta - 1) \hkappa_{i'}} (A_{i'} + B_{i'} x_{i'} + y_{i'})^{\frac{\alpha}{r_{i'}} a_{i'} + (\beta - 1) b_{i'}},
    \end{equation}
    which is a power series in $x_{i'}^{1/r_{{i'}+1}}, y_{i'}$. 
    
    For each $\widetilde{\alpha}, \widetilde{\beta} \in \Z_{\geq 0}$, let $I_{(\widetilde{\alpha}, \widetilde{\beta})} \subset \Z^2_{\geq 0}$ be the set of exponents $(\alpha, \beta)$ such that the monomial $x_{i'}^{\widetilde{\alpha}/r_{{i'}+1}} y_{i'}^{\widetilde{\beta}}$ appears with nonzero coefficient on the right hand side of \eqref{change-coord-monomial}. The coefficient $c_{\xi, {i'}}(\widetilde{\alpha}, \widetilde{\beta})$ is hence a linear combination of the coefficients $c_{\xi, {i'}}(\alpha, \beta)$ such that $(\alpha, \beta) \in I_{(\widetilde{\alpha}, \widetilde{\beta})}$.

    Recall that $(A_{i'} + B_{i'} x_{i'} + y_{i'})^{\frac{\alpha}{r_{i'}} a_{i'} + (\beta - 1) b_{i'}}$ is a power series with positive integer exponents by the binomial theorem, i.e. its monomials are of the form $x_{{i'}}^n y_{{i'}}^m$, $n, m \in \Z_{\geq 0}$. Therefore, a necessary condition so that $(\alpha, \beta) \in I_{(\widetilde{\alpha}, \widetilde{\beta})}$ is that there exist $n, m \in \Z_{\geq 0}$ solving the following system of equations:
    
    \begin{equation} \label{exponent-equations}
    % \begin{cases}
    %     \tfrac{\widetilde{\alpha}}{r_{{i'}+1}} = \tfrac{\alpha}{r_{i'}} \hr_{i'} + (\beta - 1) \hkappa_{i'} + n \\ 
    %     \widetilde{\beta} = m
    % \end{cases} \iff 
    \begin{cases}
        \widetilde{\alpha} = \alpha + (\beta - 1) \kappa_{i'} + n r_{{i'}+1} \\
        \widetilde{\beta} = m
    \end{cases}
    \end{equation}
    % The integers $n, m$ are the exponents of a monomial $x_{i'}^n y_{i'}^m$ coming from the power series $(A_{i'} + B_{i'} x_{i'} + y_{i'})^{\frac{\alpha}{r_{i'}} a_{i'} + (\beta - 1) b_{i'}}$ (recall that this is a power series with positive integer exponents by the binomial theorem).

    \medskip \noindent
    \textit{Claim.} For every integer ${i'} + 1 \leq \ell \leq g$, if $r_\ell$ does not divide $\widetilde{\alpha}$ and $\widetilde{\alpha} < k_\ell - k_{i'}$, then $c_{\xi, {i'}}(\widetilde{\alpha}, \widetilde{\beta}) = 0$ for every $\widetilde{\beta} \geq 0$.

    \medskip \noindent
    \textit{Proof of the claim.} This is an easy induction on ${i'}$. The case ${i'} = 0$ follows from the definitions of $P_{\xi,0}$ and $k_\ell$, see Definition \ref{characteristic-exponents}. Now let $(\alpha, \beta) \in I_{(\widetilde{\alpha}, \widetilde{\beta})}$, and since it satisfies \eqref{exponent-equations}, we obtain: 
    \begin{itemize}
        \item Because $\ell \geq {i'}+1$, we know that $r_\ell$ divides $\kappa_{i'}$ and $r_{{i'}+1}$. By assumption $r_\ell$ does not divide $\widetilde{\alpha}$, therefore $r_\ell$ does not divide $\alpha$ either.

        \item Because $\widetilde{\alpha} < k_\ell - k_{i'}$, we have
        \[
        \alpha < k_\ell - k_{i'} - (\beta - 1)\kappa_{i'} - nr_{{i'}+1} = k_\ell - k_{{i'}-1} - \beta \kappa_{i'} - n r_{{i'}+1} \leq k_\ell - k_{{i'}-1}.
        \]
    \end{itemize}
    Under these conditions, the inductive hypothesis says that $c_{\xi, {i'}-1}(\alpha, \beta) = 0$. Since this holds for every $(\alpha, \beta) \in I_{(\widetilde{\alpha}, \widetilde{\beta})}$, and $c_{\xi, {i'}}(\widetilde{\alpha}, \widetilde{\beta})$ is a linear combination of the coefficients $c_{\xi, {i'}}(\alpha, \beta)$ such that $(\alpha, \beta) \in I_{(\widetilde{\alpha}, \widetilde{\beta})}$, we deduce that $c_{\xi, {i'}}(\widetilde{\alpha}, \widetilde{\beta}) = 0$, concluding the induction on ${i'}$.

    Now let us study the coefficients $c_{\xi,{i'}}(k_\ell - k_{i'}, 0)$. Consider the equations \eqref{exponent-equations} with $\widetilde{\alpha} = k_\ell - k_{i'}$ and $\widetilde{\beta} = 0$. We find one solution
    \[
    \alpha = k_\ell - k_{i'} + \kappa_{i'} = k_\ell - k_{{i'}-1}, \quad \beta = 0, \quad n = 0, \quad m = 0.
    \]
    Any other solution has $\alpha < k_\ell - k_{{i'}-1}$, and since $r_\ell$ does not divide $k_\ell$, it does not divide $\alpha$ either. By the claim above, this means that all $(\alpha, \beta) \in I_{(k_\ell - k_{i'}, 0)}$ have $c_{\xi,{i'}-1}(\alpha, \beta) = 0$, except possibly $(\alpha, \beta) = (k_\ell - k_{{i'}-1}, 0)$. Hence,
    \begin{equation} \label{recurrence-coef-x}
    c_{\xi,{i'}}(k_\ell - k_{i'}, 0) = c_{\xi,{i'}-1}(k_\ell - k_{{i'}-1}, 0) A_{i'}^{a_{i'}(k_\ell - k_{{i'}-1})/r_{i'} - b_{i'}}.
    \end{equation}
    
    Consider now the coefficients $c_{\xi,{i'}}(0,1)$. The equations \eqref{exponent-equations} with $\widetilde{\alpha} = 0, \ \widetilde{\beta} = 1$ have the following solutions:
    \begin{enumerate}[label=(\roman{*})]
        \item $\alpha = \kappa_{i'}, \quad \beta = 0, \quad n = 0, \quad m = 1,$
        \item $\alpha = 0, \quad \beta = 1, \quad n = 0, \quad m = 1,$
        \item $\alpha = \kappa_{i'} - \nu r_{{i'}+1}, \quad \beta = 0, \quad n = \nu, \quad m = 1, \quad$ where $0 < \nu \leq \hkappa_{i'}$.
    \end{enumerate}
    The solutions of the form (iii) have $c_{\xi, {i'}-1}(\alpha, \beta) = 0$ by the inductive hypothesis on the Newton polygon of $P_{\xi,{i'}-1}$. Solution (ii) is not valid because it makes the exponent of the right term zero:
    \[
    (A_{i'} + B_{i'} x_{i'} + y_{i'})^{\frac{\alpha}{r_{i'}} a_{i'} + (\beta - 1) b_{i'}} = (A_{i'} + B_{i'} x_{i'} + y_{i'})^0 = 1,
    \]
    and therefore the right hand side of \eqref{change-coord-monomial} does not contain the monomial $y_{i'}$. Hence we are left with solution (i) alone. Recalling that $\kappa_{i'} = k_{i'} - k_{{i'}-1}$ we have
    \begin{equation} \label{recurrence-coef-y}
        c_{\xi,{i'}}(0,1) = c_{\xi,{i'}-1}(k_{i'} - k_{{i'}-1}, 0) \dfrac{a_{i'} \kappa_{i'}}{r_{i'}} A_{i'}^{(-1)^{n_{i'}} \tfrac{r_{{i'}+1}}{r_{i'}} - 1}. 
    \end{equation}
    Finally, observe that
    \begin{equation} \label{recurrence-base-case}
    c_{\xi, 0}(k_n, 0) = -\varphi_{k_n} \xi^{k_n} \neq 0, \quad c_{\xi,0}(0, 1) = 1 \neq 0.
    \end{equation}
    Putting together \eqref{recurrence-coef-x}, \eqref{recurrence-coef-y}, \eqref{recurrence-base-case} and the fact that $A_{i'} \neq 0$ for every ${i'}$, we arrive at the desired result that the coefficients of $x_i^{\kappa_{i+1}/r_{i+1}}$ and $y_i$ in $P_{\xi,i}$ are nonzero, i.e. $c_{\xi,i}(k_{i+1} - k_i, 0) \neq 0$ and $c_{\xi_i}(0,1) \neq 0$. This concludes the induction on $i$ and the proof.
    % More generally, for every $0 \leq i' \leq n \leq g$ denote by $c_{\xi,x}(i',n)$ the coefficient of $x_{i'}^{(k_n-k_{i'})/r_{{i'}+1}}$ in $P_{\xi,i'}$ and by $c_{\xi,y}(i')$ the coefficient of $y_{i'}$ in $P_{\xi,i'}$. Here we convene that $k_0 = 0$. Then, from equation \eqref{next-rupture-component} we obtain the relations
    % \begin{equation} \label{coefficient-recurrence}
    % \begin{alignedat}{3}
    %     &c_{\xi,x}(0, n) = -\varphi_{k_n} \xi^{k_n}, \quad && c_{\xi, x}({i'}+1, n) = c_{\xi, x}({i'}, n) A_{{i'}+1}^{a_{{i'}+1}(k_n - k_{i'})/r_{{i'}+1} - b_{{i'}+1}}, \\   
    %     &c_{\xi, y}(0) = 1, && c_{\xi, y}({i'}+1) = c_{\xi, x}({i'}, {i'}+1) a_{{i'}+1} \kappa_{{i'}+1} A_{{i'}+1}^{(-1)^{n_{{i'}+1}}r_{{i'}+2}/r_{{i'}+1} - 1}/r_{{i'}+1}.
    % \end{alignedat}
    % \end{equation}
    % From these formulas it is clear that all of these coefficients are nonzero. In particular, $x_i^{\kappa_{i+1}/r_{i+1}}$ and $y_i$ are the only vertices in the Newton polygon, concluding the induction and the proof.
\end{proof}

We end the section by giving formulas for the resolution data in terms of the Newton pairs.

\begin{proposition} \label{log-discrepancies}
    Let $E \in \mathcal{E}$ be an exceptional divisor with Newton pairs $(\hkappa_i^E, \hr_i^E), i = 1,\ldots,g^E$. Then
    \[
    \nu_E = k_{g^E}^E + r_1^E.
    \]
    
\end{proposition}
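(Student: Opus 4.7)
The plan is to prove the formula by induction on $g^E$ via the recursion
\[
\nu_{R_i} = \hkappa_i^E + \hr_i^E\, \nu_{R_{i-1}}, \qquad i = 1, \ldots, g^E,
\]
with base case $\nu_{R_0} \coloneqq 1$ (since $R_0 = L_0'$ is a smooth, non-exceptional curve in $\C^2$) and final value $R_{g^E} = E$.

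The main ingredient is the following elementary fact, which complements Proposition \ref{blowup-formula}: if $L, L'$ are two smooth prime divisors on a smooth surface meeting transversely at a point $P$, with log discrepancies $\nu_L$ and $\nu_{L'}$, then the exceptional divisor $E_{(\kappa, r)}$ produced by the sequence of blow-ups described in that proposition satisfies
\[
\nu_{E_{(\kappa, r)}} = \kappa\, \nu_L + r\, \nu_{L'}.
\]
This is a direct consequence of the standard log discrepancy formula for blow-ups of smooth surface points (the $\nu$ of the new exceptional divisor equals the sum of those of the divisors through the blown-up point, with the convention $\nu = 1$ when no divisor passes through), together with induction on the Stern-Brocot tree: if $(\kappa, r) = (\kappa', r') + (\kappa'', r'')$ is the Farey sum of its parents, then $\nu_{E_{(\kappa, r)}} = \nu_{E_{(\kappa', r')}} + \nu_{E_{(\kappa'', r'')}}$ by the blow-up formula applied at the intersection of the two parent divisors. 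Applying this at stage $i$ of the resolution with $L = L_{i-1}$, $L' = L'_{i-1} = R_{i-1}$ and $(\kappa, r) = (\hkappa_i^E, \hr_i^E)$ yields $\nu_{R_i} = \hkappa_i^E \nu_{L_{i-1}} + \hr_i^E \nu_{R_{i-1}}$. Provided $L_{i-1}$ is not an exceptional divisor of the previous blow-ups, $\nu_{L_{i-1}} = 1$ and the stated recursion follows.

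To conclude, I would unroll the recursion to obtain
\[
\nu_{R_{g^E}} = \prod_{i=1}^{g^E} \hr_i^E + \sum_{i=1}^{g^E} \hkappa_i^E \prod_{j=i+1}^{g^E} \hr_j^E.
\]
Using $\hr_j^E = r_j^E/r_{j+1}^E$ together with $r_{g^E+1}^E = 1$ (by irreducibility of the curve $D$ defining the Newton pairs of $E$), the telescoping products give $\prod_{j=i+1}^{g^E} \hr_j^E = r_{i+1}^E$ and hence $\hkappa_i^E \prod_{j=i+1}^{g^E} \hr_j^E = \kappa_i^E$; summing produces $\nu_{R_{g^E}} = r_1^E + \sum_{i=1}^{g^E}\kappa_i^E = r_1^E + k_{g^E}^E$, as desired. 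The main subtlety is justifying the condition $\nu_{L_{i-1}} = 1$: since $L_{i-1}$ is only constrained to be a smooth curve through $Q_{i-1}$ satisfying a contact or transversality condition, we may always perturb it to avoid the finitely many exceptional divisors; as $\nu_{R_i}$ is intrinsic to $R_i$ and independent of the auxiliary choice of $L_{i-1}$, this does not affect the computation.
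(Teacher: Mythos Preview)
Your proof is correct and follows essentially the same approach as the paper: both establish the linearity formula $\nu_{E_{(\kappa,r)}} = \kappa\,\nu_L + r\,\nu_{L'}$ by induction on the Stern--Brocot tree, deduce the recursion $\nu_{R_i} = \hkappa_i^E + \hr_i^E \nu_{R_{i-1}}$ using that $L_{i-1}$ is non-exceptional, and then expand. Your write-up is slightly more detailed in spelling out the telescoping and in justifying why $L_{i-1}$ may be taken non-exceptional, but the structure is the same.
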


\begin{proof}
    Let $X \to \C^2$ be a composition of blow-ups and suppose $E_{(1,0)}$ and $E_{(0,1)}$ are two divisors on $X$ intersecting transversely at a point $Q \in X$, as in Proposition \ref{blowup-formula}, with log discrepancies $\nu_{(1,0)}$ and $\nu_{(0,1)}$ respectively. We claim that the log discrepancy of $E_{(\kappa,r)}$ is
    \[
    \nu_{(\kappa,r)} = \kappa \nu_{(1,0)} + r \nu_{(0,1)}.
    \]
    We prove this by induction on the pair $(\kappa,r)$. The formula is clearly true for the base cases $E_{(1,0)}$ and $E_{(0,1)}$. Now suppose that the formula holds true for divisors $E_{(\kappa',r')}$ and $E_{(\kappa'',r'')}$ over $Q$ intersecting transversely at a point. We are going to prove that the formula also holds for $E_{(\kappa,r)} = E_{(\kappa'+\kappa'',r'+r'')}$, the divisor that appears when we blow up the point $E_{(\kappa',r')} \cap E_{(\kappa'',r'')}$. Let $W \to X \to \C^2$ be the minimal composition of blow-ups that makes $E_{(\kappa',r')}$ and $E_{(\kappa'',r'')}$ appear (i.e. such that the centers of the valuations associated to $E_{(\kappa',r')}$ and $E_{(\kappa'',r'')}$ are divisors), and let $\sigma: \widetilde{W} \to W$ be the blow-up of the intersection point of $E_{(\kappa',r')}$ and $E_{(\kappa'',r'')}$. Then,
    \begin{align*}
    \ord_{E_{(\kappa,r)}} K_{\widetilde{W}/\C^2} &= \ord_{E_{(\kappa,r)}} (\sigma^* K_{W/\C^2} + E_{(\kappa,r)}) \\  
    &= \ord_{E_{(\kappa',r')}} K_{W/\C^2} + \ord_{E_{(\kappa'',r'')}} K_{W/\C^2} + 1
    \end{align*}
    and hence
    \[
    \nu_{E_{(\kappa,r)}} = \nu_{E_{(\kappa',r')}} + \nu_{E_{(\kappa'',r'')}} = (\kappa'+\kappa'')\nu_{(1,0)} + (r'+r'') \nu_{(0,1)} = \kappa \nu_{(1,0)} + r \nu_{(0,1)}.
    \]

    Applying this formula we see that
    \[
    \nu_{R_i^E} = \hkappa_i^E + \hr_i^E \nu_{R_{i-1}^E} \quad \text{for all } i = 1, \ldots, g^E,
    \]
    since $R_i^E$ is the divisor corresponding to the pair $(\hkappa_i^E, \hr_i^E)$ between a non-exceptional curve (which therefore has log discrepancy $1$) and $R_{i-1}^E$. In the case $i = 1$, a non-exceptional curve plays the role of ``$R_0^E$''. The result follows by recalling that $E = R_{g^E}^E$ and expanding the inductive formula.
\end{proof}

\begin{definition} \label{omega}
    Given $j = 1,\ldots,b$ and $E \in \mathcal{E}$, we denote by $\i = \i(E,j)$ the maximal integer $0 \leq i \leq g^E - 1$ such that $Q_i^E = Q_i^{[j]}$. Moreover, we say that $E$ and $C^{[j]}$ are \emph{compatible} if one of the following conditions is satisfied 
    \begin{enumerate}[label=(\roman{*})]
        \item $\kappa_{\i+1}^{[j]} / r_{\i+1}^{[j]} < 1$, or
        \item $\kappa_{\i+1}^{[j]} / r_{\i+1}^{[j]} > 1$ and the axes $L_\i^{[j]}, L_\i^E$ can be taken tangent to each other (recall that there is some freedom in the choice of the axes).
    \end{enumerate}
    Note that $E$ and $C^{[j]}$ are compatible if and only if the axis $L_\i^{[j]}$ can be chosen so that $R_{\i+1}^E$ is a divisor between $L_\i^{[j]}$ and $L_\i^{\prime[j]}$ in the sense of Definition \ref{divisors-between}.
    %If $R_{\i+1}^E$ is a divisor between $L_\i^{[j]}$ and $L_\i^{\prime[j]}$, we say that \emph{$E$ belongs to the resolution of $C^{[j]}$}. The reason for this is that under this condition the divisor $E$ is obtained by successively blowing up intersection points of exceptional divisors from the minimal log resolution of the branch $C^{[j]}$.
\end{definition}

\begin{example}
\begin{enumerate}[label=(\roman{*}), wide]
    \item Consider the setting of Figure \ref{fig:res-algorithm-step-after}, ignoring the vertices in gray. For every branch $C^{[j]}, j = 1,\ldots,5$ and every $E$ in the picture we have $\i(E,j) = i - 1$, where $i$ is the step of the iteration depicted in the figure. Indeed, $Q_{i-1}^E = Q_{i-1}^{[j]} = Q_{i-1}$, where $Q_{i-1}$ is the point in Figure \ref{fig:res-algorithm-step-before}. 
    
    Every divisor in the picture is compatible with $C^{[5]}$. Indeed, $\kappa_i^{[5]} / r_i^{[5]} < 1$ because the strict transform is left of the divisor $E_{(1,1)}$. 
    
    On the other hand, $\kappa_i^{[1]} / r_i^{[1]} > 1$ and therefore only the divisors in the middle row are compatible with $C^{[1]}$. Indeed, the divisors in the top row have $L_i^E = L_i^{[3]} \neq L_i^{[1]}$, so they are not compatible with $C^{[1]}$, and similarly for the divisors in the bottom row.

    \item In Figure \ref{fig:example-resolution}, every divisor is compatible with $C^{[1]}$ and $C^{[2]}$ (note that $Q_1^{[1]} \neq Q_1^{[2]} = Q_1^{[3]} = Q_1^{[4]}$). The only divisors \emph{not} compatible with $C^{[3]}$ are the three divisors in the row of $\widetilde{C}^{[4]}$. Similarly, the only divisors \emph{not} compatible with $C^{[4]}$ are the three divisors in the row of $\widetilde{C}^{[3]}$.
\end{enumerate}
\end{example}

\begin{proposition} \label{multiplicities}
    Let $j = 1,\ldots,b$ and let $E \in \mathcal{E}$ be an exceptional divisor with Newton pairs $(\hkappa_i^E, \hr_i^E), i = 1,\ldots,g^E$. If $E$ and $C^{[j]}$ are compatible, then
    \[
    N_E^{[j]} = \kappa_1^E r_1^{[j]} + \cdots + \kappa_{\i}^E r_{\i}^{[j]} + \min \left\{\kappa_{\i+1}^{[j]} r_{\i+1}^E, \kappa_{\i+1}^E r_{\i+1}^{[j]} \right\}.
    \]
    Otherwise, if $E$ and $C^{[j]}$ are \emph{not} compatible, then
    \[
    N_E^{[j]} = \kappa_1^E r_1^{[j]} + \cdots + \kappa_{\i}^E r_{\i}^{[j]} + r_{\i+1}^E r_{\i+1}^{[j]}.
    \]
\end{proposition}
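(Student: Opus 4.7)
The plan is to compute $N_E^{[j]} = \ord_E(f^{[j]})$ by combining the Puiseux decomposition \eqref{puiseux-decomposition} with the inductive factorization from Lemma \ref{pxi-decomposition}. Writing $f^{[j]} = u^{[j]} \prod_{\xi \in \bmu_{r_1^{[j]}}} P_{\xi, 0}^{[j]}$ reduces the question to computing $\sum_\xi \ord_E(P_{\xi, 0}^{[j]})$. As a preliminary one first establishes the auxiliary formulas $\ord_E(x_i^E) = r_{i+1}^E$ and $\ord_E(y_i^E) = \kappa_{i+1}^E$ for $0 \leq i \leq g^E - 1$. These follow by descending induction using equations \eqref{last-blow-up-E} and \eqref{next-rupture-component}: starting from $\ord_E(x_{g^E}^E) = 1$ and $\ord_E(y_{g^E}^E) = 0$ (since $E$ is locally cut out by $x_{g^E}^E$ and $y_{g^E}^E$ is a coordinate on $E$), each back-step applies the monomial relations, while noting that the unit factors $(A_i + B_i x_i + y_i)^{a_i}$ vanish to order zero along $E$.

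The second ingredient is coordinate compatibility. Since $Q_k^E = Q_k^{[j]}$ for $k \leq \omega$, one can arrange $(x_k^E, y_k^E) = (x_k^{[j]}, y_k^{[j]})$ for $k < \omega$ and $x_\omega^E = x_\omega^{[j]}$ (up to a unit, since both define $R_\omega$). When $E$ belongs to the resolution of $C^{[j]}$ one may further take $y_\omega^E = y_\omega^{[j]}$; otherwise the tangent directions of $L_\omega^E$ and $L_\omega^{[j]}$ are necessarily distinct, so $y_\omega^{[j]} = y_\omega^E - c\, x_\omega^E + (\text{higher})$ for some $c \neq 0$. Now group the $\xi$ by $i(\xi) := \max\{i : \xi \in \bmu_{r_i^{[j]}}\}$. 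By Lemma \ref{pxi-decomposition}, for $\xi$ with $i(\xi) \leq \omega$ the residual factor $P_{\xi, i(\xi)}^{[j]}$ is a unit at $Q_{i(\xi)}$ and therefore vanishes to order zero along $E$ (which contracts to that point), so $\ord_E(P_{\xi, 0}^{[j]}) = \sum_{k=1}^{i(\xi)} \kappa_k^E$. Using that exactly $r_k^{[j]}$ values of $\xi$ satisfy $i(\xi) \geq k$, summation by parts aggregates the contribution from these $\xi$ to $\sum_{k=1}^{\omega} \kappa_k^E r_k^{[j]}$.

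For the remaining $r_{\omega+1}^{[j]}$ values $\xi \in \bmu_{r_{\omega+1}^{[j]}}$, one uses $P_{\xi, 0}^{[j]} = y_0^{[j]} \cdots y_{\omega-1}^{[j]} P_{\xi, \omega}^{[j]}$ and analyzes the Newton polygon of $P_{\xi, \omega}^{[j]}$, whose only vertices are $(x_\omega^{[j]})^{\kappa_{\omega+1}^{[j]}/r_{\omega+1}^{[j]}}$ and $y_\omega^{[j]}$. When $E$ belongs, these vertices have $\ord_E$ equal to $\kappa_{\omega+1}^{[j]} r_{\omega+1}^E / r_{\omega+1}^{[j]}$ and $\kappa_{\omega+1}^E$ respectively; a convexity argument for points strictly above the segment in the Newton polygon shows that $\ord_E(P_{\xi, \omega}^{[j]})$ equals the minimum of these two, and summing over the $r_{\omega+1}^{[j]}$ relevant $\xi$ yields $\min(\kappa_{\omega+1}^{[j]} r_{\omega+1}^E,\ \kappa_{\omega+1}^E r_{\omega+1}^{[j]})$. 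When $E$ does not belong, the substitution $y_\omega^{[j]} = y_\omega^E - c\, x_\omega^E + \cdots$ introduces the additional monomial $-c\, x_\omega^E$ of order $r_{\omega+1}^E$; since non-belonging forces both $\hat\kappa_{\omega+1}^E > \hat r_{\omega+1}^E$ and $\hat\kappa_{\omega+1}^{[j]} > \hat r_{\omega+1}^{[j]}$ (the respective strict transforms remain transverse to $R_\omega$ at $Q_\omega$, otherwise the coordinates could be chosen so that $E$ belongs), this new term achieves the unique minimum order, giving $\ord_E(P_{\xi, \omega}^{[j]}) = r_{\omega+1}^E$ and total contribution $r_{\omega+1}^E r_{\omega+1}^{[j]}$. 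Combining with the previous sum yields the claimed formula.

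The main obstacle is the careful Newton polygon analysis, in particular verifying that the minimum order is uniquely achieved so that no cancellations among leading terms occur, and justifying geometrically in the non-belonging case that the newly introduced $x_\omega^E$ term strictly dominates both original vertices. Some extra care is also required for edge cases involving Newton pairs of the form $(1,1)$ and for the base case $\omega = 0$.
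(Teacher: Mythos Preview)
Your approach is essentially the same as the paper's: both reduce to summing $\ord$-contributions of the Puiseux factors $P_{\xi,0}^{[j]}$ via Lemma~\ref{pxi-decomposition}, establish the auxiliary orders $\ord(x_i^E)=r_{i+1}^E,\ \ord(y_i^E)=\kappa_{i+1}^E$ by descending induction from \eqref{last-blow-up-E}--\eqref{next-rupture-component}, and finish with the same Newton-polygon case analysis and the observation that non-belonging forces $\hat\kappa_{\omega+1}^E>\hat r_{\omega+1}^E$ and $\hat\kappa_{\omega+1}^{[j]}>\hat r_{\omega+1}^{[j]}$. The only cosmetic difference is that the paper packages the computation as an intersection number $N_E^{[j]}=f^{[j]}\cdot D$ with a curve $D$ whose lift meets $E$ transversely at a generic point, computing everything as $\ord_t$ of the parametrization; this has the side benefit of making the fractional-power series and the ``no cancellation of leading terms'' issues you flag automatic (the former via fractional intersection multiplicities, the latter via genericity of $\tilde D\cap E$).
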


% \begin{proposition} \label{multiplicities}
%     Let $j = 1,\ldots,b$ and let $E \in \mathcal{E}$ be an exceptional divisor with Newton pairs $(\hkappa_i^E, \hr_i^E), i = 1,\ldots,g^E$. If $R_{\i+1}^E$ is a divisor between $L_\i^{[j]}$ and $L_\i^{\prime[j]}$, then
%     \[
%     N_E^{[j]} = r_{\i+1}^E N_\i^{[j]} + \min \left\{\kappa_{\i+1}^{[j]} r_{\i+1}^E, \kappa_{\i+1}^E r_{\i+1}^{[j]} \right\}.
%     \]
%     If $R_{\i+1}^E$ is \emph{not} a divisor between $L_\i^{[j]}$ and $L_\i^{\prime[j]}$, then
%     \[
%     N_E^{[j]} = r_{\i+1}^E N_\i^{[j]} + r_{\i+1}^E r_{\i+1}^{[j]}.
%     \]
%     Here $N_i^{[j]} \coloneqq N_{R_i^{[j]}}^{[j]}$ is the multiplicity of $f^{[j]}$ along the divisor $R_i^{[j]}$. In particular, 
%     \[
%     N_i^{[j]} = \dfrac{1}{r_{i+1}^{[j]}} (\kappa_1^{[j]} r_1^{[j]} + \cdots + \kappa_i^{[j]} r_i^{[j]}) \quad \text{for all } i = 0, 1, \ldots, g^{[j]}.
%     \]
% \end{proposition}

\begin{proof}
    Let $D$ be a curve in $\C^2$ whose lift $\widetilde{D}$ to the resolution $\mu$ intersects the divisor $E$ transversely at a point different from the intersection points of $E$ with other exceptional divisors or with the strict transform of $C$. We may express $N_E^{[j]}$ as an intersection number:
    \[
    N_E^{[j]} = N_E^{[j]} (E \cdot \widetilde{D}) = \mu^*(C^{[j]}) \cdot \widetilde{D} = C^{[j]} \cdot D.  
    \]
    Recall that this intersection product is computed by substituting a parametrization $(\delta_1(t), \delta_2(t))$ of $D$ into the equation of $C^{[j]}$, and computing the order of the resulting power series:
    \[
    C^{[j]} \cdot D = \ord_t f^{[j]}(\delta_1(t), \delta_2(t)).
    \]
    Our strategy to do this computation is to use the Puiseux decomposition \eqref{puiseux-decomposition}, substitute the parametrization of $D$ into each factor $P_{\xi,0} = y_0 - \varphi(\xi x_0^{1/r_1})$ separately, and finally compute the order using Lemma \ref{pxi-decomposition}. 
    
    The expressions $P_{\xi,0}$ are power series with fractional exponents, so in order to substitute the parametrization of $D$ into them one needs to choose a root $\delta_1(t)^{1/r_1}$. This root always exists as an element of $\C \llbracket t^{1/r_1} \rrbracket$, so the resulting orders $\ord_t P_{\xi,0}(\delta_1(t), \delta_2(t))$ are elements of $\Z[\tfrac{1}{r_1}]$. The idea of using fractional arcs is not new, see for instance \cite{denef2002} and \cite{pepereira2013}. Either way, we still have the equality
    \[
    \ord_t f^{[j]}(\delta_1(t), \delta_2(t)) = \sum_{\xi \in \bmu_{r_1^{[j]}}} \ord_t P^{[j]}_{\xi,0}(\delta_1(t), \delta_2(t)).
    \]
    
    % In turn, we may compute this intersection multiplicity as a sum of \emph{fractional intersection multiplicities}, 
    % \[
    % C^{[j]} \cdot D = \sum_{\xi \in \bmu_{r_1^{[j]}}} P_{\xi, 0}^{[j]} \cdot D.
    % \]
    % These were introduced in \cite[Definition 7.5]{budur2022nash} {\color{red} (Caution: since this sentence will eventually have to be removed, I suggest removing it now)}. For our current purposes we just need to know that the fractional intersection multiplicities $P_{\xi, 0}^{[j]} \cdot D$ form a well-defined list of rational numbers. Their sum is the usual intersection multiplicity $C^{[j]} \cdot D$, and they are computed by substituting a parametrization of $D$ into the series $P_{\xi, 0}^{[j]},\ \xi \in \bmu_{r_1^{[j]}}$ (which involves choosing a root of a power series). The idea of using fractional arcs is not new, see for instance \cite{denef2002} and \cite{pepereira2013}.

    Let us return to using Notation \ref{arc-notation} again, i.e. $P^{[j]}_{\xi,0}(t) \coloneqq P^{[j]}_{\xi,0}(\delta_1(t), \delta_2(t))$. Since $\widetilde{D}$ is transversal to $E$ (and by definition of the coordinates $(x_{g^E}^E, y_{g^E}^E)$), we have
    \[
    \ord_t x_{g^E}^E (t) = 1, \quad \ord_t y_{g^E}^E (t) = 0.
    \]
    Applying first \eqref{last-blow-up-E} and then \eqref{next-rupture-component} repeatedly, we find
    \[
    \ord_t x_i^E (t) = r_{i+1}^E, \quad \ord_t y_i^E (t) = \kappa_{i+1}^E, \quad \text{for all } i = 0, \ldots, g^E - 1.
    \]
    
    Now substitute these into the Puiseux series decomposition using Lemma \ref{pxi-decomposition}. By definition of $\i$, the coordinates $(x_i^{[j]}, y_i^{[j]})$ and $(x_i^E, y_i^E)$ coincide for all $i = 1, \ldots, \i - 1$, and they can be chosen to coincide for $i = \i$ if and only if $E$ is a divisor between $L_\i^{[j]}$ and $L_\i^{\prime[j]}$, i.e. if $E$ and $C^{[j]}$ are compatible. Hence if $\xi \in \bmu_{r_i^{[j]}} \setminus \bmu_{r_{i+1}^{[j]}}$ for some $i = 1, \ldots, \i$, then
    \[
    \ord_t P_{\xi,0}^{[j]}(t) = \ord_t y_0^E(t) + \cdots + \ord_t y_{i-1}^E(t) + \ord_t P_{\xi,i}^{[j]}(t) = \kappa_1^E + \cdots + \kappa_i^E + 0 = k_i^E.
    \]
    On the other hand, if $\xi \in \bmu_{r_{\i+1}^{[j]}}$, then the only vertices in the Newton polygon of $P_{\xi,\i}^{[j]}$ are those corresponding to $(x_{\i}^{[j]})^{\kappa_{\i+1}^{[j]}/r_{\i+1}^{[j]}}$ and $y_{\i}^{[j]}$ by Lemma \ref{pxi-decomposition}. Thus,
    \[
    \ord_t P_{\xi,\i}^{[j]}(t) = \min \left\{ \dfrac{\kappa_{\i+1}^{[j]}}{r_{\i+1}^{[j]}} \ord_t x_{\i}^{[j]}(t), \ord_t y_{\i}^{[j]}(t) \right\}.
    \]
    Summing over all possible values of $\xi \in \bmu_{r_1^{[j]}}$ we obtain
    \begin{align*}
    N_E^{[j]} &= C^{[j]} \cdot D = \ord_t f^{[j]}(t) = \sum_{\xi \in \bmu_{r_1^{[j]}}} \ord_t P_{\xi,0}^{[j]}(t) = \\ 
    &= (r_1^{[j]} - r_2^{[j]}) k_1^E + \cdots + (r_{\i}^{[j]} - r_{\i+1})^{[j]} k_{\i}^E + r_{\i+1}^{[j]} \left( k_{\i}^E + \ord_t P_{\xi,\i}^{[j]}(t) \right) = \\ 
    &= \kappa_1^E r_1^{[j]} + \cdots + \kappa_{\i}^E r_{\i}^{[j]} + \min \left\{ \kappa_{\i+1}^{[j]} \ord_t x_{\i}^{[j]}(t), r_{\i+1}^{[j]} \ord_t y_{\i}^{[j]}(t) \right\}.
    \end{align*}
    If $E$ and $C^{[j]}$ are compatible, then
    \[
    \ord_t x_{\i}^{[j]}(t) = \ord_t x_{\i}^E(t) = r_{\i+1}^E, \quad \ord_t y_{\i}^{[j]}(t) = \ord_t y_{\i}^E(t) = \kappa_{\i+1}^E.
    \]
    Otherwise, if $E$ and $C^{[j]}$ are not compatible, we must have $\hkappa_{\i+1}^E > \hr_{\i+1}^E$ and $\hkappa_{\i+1}^{[j]} > \hr_{\i+1}^{[j]}$. Hence
    \[
    \ord_t x_{\i}^{[j]}(t) = \ord_t y_{\i}^{[j]}(t) = \min \left\{\kappa_{\i+1}^E, r_{\i+1}^E \right\} = r_{\i+1}^E.
    \]
\end{proof}

\begin{remark} \label{special-case-multiplicities}
    At the end of Definition \ref{parts-resolution-graph}.\ref{special-case} we made the following claim: suppose all branches $C^{[j]}$ share the same divisor $R \coloneqq R_1^{[j]}$ (and hence there are two ends coming out of $R$ by Lemma \ref{end-well-defined}), and suppose both ends $\mathcal{F}_{R,1}$ and $\mathcal{F}_{R,2}$ contain at least one $m$-divisor each, then $R$ is an $m$-divisor. We can prove this using Proposition \ref{multiplicities}.
    
    Indeed, note that for every divisor $E \in \mathcal{F}_{R,1} \cup \mathcal{F}_{R,2}$ and every $j = 1, \ldots, b$ we have $\i(E,j) = 0$, and $E$ and $C^{[j]}$ are compatible. Thus by Proposition \ref{multiplicities} we have $N_E^{[j]} = \min\{\kappa_1^{[j]} r_1^E, \kappa_1^E r_1^{[j]}\}$.
    %, and which of the two terms is the minimum depends on which of the two ends $E$ lies on
    Also note that by assumption $\hkappa_1^{[j]} = \hkappa^R$ and $\hr_1^{[j]} = \hr^R$ for every $j = 1, \ldots, b$. Thus, if we denote $s \coloneqq \sum_{j=1}^b r_2^{[j]}$, then $N_E = s\hkappa^R r_1^E$ for every $E \in \mathcal{F}_{R,1}$, whereas $N_E = s\hr^R \kappa_1^E$ for every $E \in \mathcal{F}_{R,2}$. Thus, if there are $m$-divisors on both $\mathcal{F}_{R,1}$ and $\mathcal{F}_{R,2}$, then $N_R = s\hkappa^R \hr^R$ divides $m$, as we wanted.
\end{remark}

%----------------------------------------------------------------

\section{Topology of the contact loci} \label{section:contact-loci}

We begin by recalling the basic definitions and results concerning contact loci. We restrict the definitions to our setting of plane curves; for the general definitions and basic properties of contact loci the reader may consult \cite{ein2004}.

\begin{definition} $\phantom{a}$
\begin{enumerate}[(i)]
    \item For any integer $m \geq 0$, an \emph{$m$-jet} of $\C^2$ is a morphism of $\C$-schemes $\gamma: \Spec \C[t]/(t^{m+1}) \to \C^2$. There exists a $\C$-scheme $\mathcal{L}_m(\C^2)$ parametrizing $m$-jets of $\C^2$, i.e. whose $\C$-points are
    \[
    \mathcal{L}_m(\C^2)(\C) = \mathrm{Mor}_\C(\Spec \C[t]/(t^{m+1}),\C^2).
    \]
    The scheme $\mathcal{L}_m(\C^2)$ is known as the \emph{$m$-jet space} of $\C^2$.

    \item An \emph{arc} of $\C^2$ is a morphism of $\C$-schemes $\gamma: \Spec \C \llbracket t \rrbracket \to \C^2$. There exists a $\C$-scheme $\mathcal{L}_\infty(\C^2)$ parametrizing arcs of $\C^2$, i.e. whose $\C$-points are
    \[
    \mathcal{L}_\infty(\C^2)(\C) = \mathrm{Mor}_\C(\Spec \C\llbracket t \rrbracket,\C^2).
    \]
    The scheme $\mathcal{L}_\infty(\C^2)$ is known as the \emph{arc space} of $\C^2$.

\end{enumerate}
There is a natural \emph{truncation map} $\pi_m: \mathcal{L}_\infty(\C^2) \to \mathcal{L}_m(\C^2)$, which on $\C$-points is given by
\[
\left(\sum_{i=0}^\infty a_it^i, \sum_{i=0}^\infty b_it^i\right) \longmapsto \left(\sum_{i=0}^m a_it^i, \sum_{i=0}^m b_it^i\right).
\]
\end{definition}

Note that an $m$-jet of $\C^2$ is equivalent to a pair $(\gamma_1(t),\gamma_2(t))$ of elements in $\C[t]/(t^{m+1})$. We will stress this relation by writing $\gamma(t) = (\gamma_1(t),\gamma_2(t))$. In turn, a truncated polynomial
\[
\sum_{i=0}^m a_it^i \in \C[t]/(t^{m+1})
\]
is determined by the tuple $(a_0,\dots,a_m)$. Hence $\mathcal{L}_m(\C^2)$ is isomorphic to $\C^{2(m+1)}$ as a $\C$-scheme. A similar discussion applies for the arc space $\mathcal{L}_\infty(\C^2)$.

Since $\mathcal{L}_m(\C^2) = \C^{2(m+1)}$ is a scheme of finite type over $\C$, we may consider both the Zariski and the analytic topologies on it. On the other hand, we will only consider $\mathcal{L}_\infty(\C^2)$ with its Zariski topology.

We will often consider the base point of an $m$-jet or arc $\gamma=(\gamma_1,\gamma_2)$ of $\C^2$, i.e. the point $\gamma(0) = (\gamma_1(0),\gamma_2(0)) \in \C^2$ obtained by evaluating at $t=0$.

% {\color{blue} The $m$-jet space of $\C^2$ is $\C$-scheme of finite type, so both its Zariski or analytic topologies can be considered. However, the arc space of $\C^2$ is a $\C$-scheme which is not of finite type, so we will only consider the Zariski topology on it (and on its subsets).}

In this paper we will only work with the $\C$-points of the $m$-jet space and the arc space of $\C^2$, so we will abuse notation and will also write $\mathcal{L}_m(\C^2)$ and $\mathcal{L}_\infty(\C^2)$ for the sets of their $\C$-points.

\begin{definition} \label{contact-locus-def}
    Let $f \in \C \llbracket x, y \rrbracket$ and let $m$ be a positive integer. The \emph{restricted $m$-contact locus} of $f$ at the origin is the space
    \[
    \X_m \coloneqq \X_{m}(f, 0) \coloneqq \{\gamma \in \mathcal{L}_m(\C^2) \mid \gamma(0) = 0,\ f(\gamma(t)) = t^m \mod t^{m+1}\}.
    \]
    Its lift to the arc space will be denoted by $\X^\infty_m = \pi_m^{-1}(\X_m)$. As with jet spaces and the arc space, the subset $\X_m$ can carry either the analytic or the Zariski topologies, whereas $\X^\infty_m$ will only be considered with its Zariski topology. The coefficient of the leading term of a nonzero power series $\psi \in \C \llbracket t \rrbracket$ is called the \emph{angular component}, and denoted by $\ac(\psi)$. The condition for an $m$-jet or an arc $\gamma$ to be in the restricted $m$-contact locus splits in three:
    \[
    \gamma(0) = 0, \quad \ord_t f(\gamma(t)) = m, \quad \ac f(\gamma(t)) = 1.
    \]
\end{definition}

\begin{definition}
    Let $f \in \C \llbracket x, y \rrbracket$ and let $m$ be a positive integer. Let $\mu: (Y, \mathbf{E}) \to (\C^2, 0)$ be the minimal $m$-separating log resolution of $C = V(f)$ and recall that we are denoting by $\mathcal{E}$ the set of irreducible components of the exceptional divisor $\mathbf{E}$. Every arc $\gamma \in \X_m^\infty$ can be lifted to a unique arc $\widetilde\gamma: \Spec \C\llbracket t \rrbracket \to Y$ in the resolution by the valuative criterion of properness.
    \begin{enumerate}[(i)]
        \item  For each exceptional component $E \in \mathcal{E}$ we define $\X_{m,E}^\infty$ to be the set of arcs in $\X_m^\infty$ whose lift to the resolution intersects $E$ and no other divisor, that is,
        \[
        \X_{m,E}^\infty \coloneqq \left\{\gamma \in \X_m^\infty \ \middle| \ \widetilde\gamma(0) \in E^\circ \right\},
        \]
        where $E^\circ \coloneqq E \setminus \cup_{E \neq F \in \mathcal{S} \cup \mathcal{E}} F$.

        \item For each rupture divisor $R \in \mathcal{R}$ we define $\frakZ_{m,R}^\infty$ to be the set of arcs in $\X_m^\infty$ that lift to the end $\mathcal{F}_R$ associated to $R$, see Definition \ref{parts-resolution-graph}. That is,
        \[
        \frakZ_{m,R}^\infty \coloneqq \bigcup_{E \in \mathcal{F}_R} \X_{m,E}^\infty.
        \]
    \end{enumerate}
\end{definition}

Note that the $m$-separating condition on $\mu$ already implies that for $\gamma \in \X_m^{\infty}$, the point $\widetilde\gamma(0)$ cannot lie in the intersection of two or more exceptional components. Furthermore, $\X_{m,E}^\infty$ is nonempty if and only if $E$ is an $m$-divisor, i.e. if $N_E$ divides $m$. 
% The homotopy type of $\X_{m,E}^\infty$ is described by \cite[Lemma 2.6]{budur2020cohomology}.

\subsection{Decomposing the contact locus}

Even though each $\frakZ_{m,R}^\infty$ is a union of sets of the form $\X_{m,E}^\infty$ and these are set-theoretically disjoint, its topology (as a subspace of $\X_m^\infty$) is not the disjoint union topology. In fact, it may happen that for two different divisors $E, F \in \mathcal{E}$ we have $\overline{\X_{m,E}^\infty} \cap \overline{\X_{m,F}^\infty} \neq \varnothing$, in which case we say that arcs can \emph{jump} between $E$ and $F$. This is a typical phenomenon that happens even in the simplest cases.

\begin{example}
    Let $f(x,y)=y^2-x^3$ and $m=6$. The minimal log resolution $\mu: Y \to \C^2$ of $f$ is 6-separating, and it is obtained by a sequence of three blow-ups. We denote the exceptional divisors of $\mu$ by $E_1,E_2,E_3$ so that $E_i$ is the exceptional divisor of the $i$-th blow-up. One can easily check that
    
    % \begin{align*}
    %     \X^\infty_{6,E_1} & = \{a_1 = a_2 = b_1 = b_2 = 0, \ b_3 = \pm 1\}, \\
    %     \X^\infty_{6,E_2} & = \{a_1 = b_1 = b_2 = b_3 = 0, \ -a_2 \in \boldsymbol{\mu}_3\}, \\
    %     \X^\infty_{6,E_3} & = \{a_1 = b_1 = b_2 = 0, \ a_2 \neq 0, \ b_3 \neq 0, \ b_3^2-a_2^3 = 1\}.
    % \end{align*}
    \begin{center}
    $\begin{aligned}
        \X^\infty_{6,E_1} & = \{a_1 = b_1 = b_2 = 0, && a_2 = 0, \ b_3^2 = 1\}, \\
        \X^\infty_{6,E_2} & = \{a_1 = b_1 = b_2 = 0, && b_3 = 0, \ -a_2^3 = 1\}, \\
        \X^\infty_{6,E_3} & = \{a_1 = b_1 = b_2 = 0, && a_2 \neq 0, \ b_3 \neq 0, \ b_3^2-a_2^3 = 1\}.
    \end{aligned}$
    \end{center}
    We see that the arc $\gamma(t) = (0,t^3)$ is in $\X^\infty_{6,E_1}$, and also in the Zariski-closure of $\X^\infty_{6,E_3}$ in $\X^\infty_6$. Similarly, the arc $\eta(t) = (-t^2,0)$ is in $\X^\infty_{6,E_2}$, and also in the Zariski-closure of $\X^\infty_{6,E_3}$ in $\X^\infty_6$.
\end{example}

The following result gives a decomposition of $\X_m^\infty$ into disjoint closed subsets, and it says precisely which jumps are possible. It already appeared in \cite[Theorem 1.21]{budur2022nash} for the case where the curve $C$ is irreducible.

\begin{theorem} \label{arc-contact-loci-decomposition}
    Let $f \in \C \llbracket x,y \rrbracket$ be a reduced plane curve with minimal $m$-separating log resolution $\mu: (Y, \mathbf{E}) \to (\C^2, 0)$ whose dual graph we split into parts as in Definition \ref{parts-resolution-graph}. Then $\X_m^\infty$ decomposes as a Zariski-topologically disjoint union
    \[
    \X_m^\infty = \left( \bigsqcup_{R \in \mathcal{R}} \frakZ_{m,R}^\infty \right) \sqcup \left( \bigsqcup_{R,S \in \mathcal{R} \cup \mathcal{S}} \bigsqcup_{E \in \mathcal{T}_{R,S}} \X_{m,E}^\infty  \right).
    \]
    Moreover, each $\frakZ_{m,R}^\infty$ is nonempty if and only if there are $m$-divisors in the end $\mathcal{F}_R$, and in that case $\frakZ_{m,R}^\infty = \overline{\X_{m,E_\dom(m,R)}^\infty}$, where the closure is taken inside $\X_m^\infty$.
\end{theorem}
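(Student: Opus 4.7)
The plan is to establish the set-theoretic decomposition from the $m$-separating condition, parametrize each piece in local coordinates adapted to the resolution, and then carry out a careful limit analysis to upgrade disjointness to the topological statement.

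The set-theoretic claim is the easy part: because $\mu$ is $m$-separating, no arc $\gamma \in \X_m^\infty$ can have lift $\tilde\gamma(0) \in E \cap F$ for distinct $E, F \in \mathcal{E} \cup \mathcal{S}$, since otherwise $\ord_t f(\gamma(t)) \geq N_E + N_F > m$. Thus $\X_m^\infty = \bigsqcup_E \X_{m,E}^\infty$, with $E$ ranging over the $m$-divisors of $\mathcal{E}$, and $\frakZ_{m,R}^\infty$ is nonempty exactly when $\mathcal{F}_R$ contains an $m$-divisor. What remains is the \emph{topological} version of the decomposition and the identification of $\frakZ_{m,R}^\infty$ with the closure of $\X_{m,E_\dom(m,R)}^\infty$.

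Next I would fix an $m$-divisor $E$ and parametrize $\X_{m,E}^\infty$ using the local coordinates $(x_{g^E}^E, y_{g^E}^E)$ from Definition \ref{newton-pairs-E}, in which $E = \{x_{g^E}^E = 0\}$. Combining Proposition \ref{blowup-formula} with the definition of $N_E$, the pullback $f \circ \mu$ factors on this chart as $(x_{g^E}^E)^{N_E}$ times a unit, so an arc in $\X_{m,E}^\infty$ corresponds to a lift $\tilde\gamma(t) = (x_{g^E}^E(t), y_{g^E}^E(t))$ with $\ord_t x_{g^E}^E(t) = m_E$, with $y_{g^E}^E(0)$ in the complement of the intersection points of $E$ with other components of $\mathbf{E} \cup \tilde C$, and with $\ac f(\gamma(t)) = 1$ imposing a single nondegenerate algebraic relation on the leading coefficient.

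The heart of the proof is then the characterization of limits: $\X_{m,F}^\infty \subseteq \overline{\X_{m,E}^\infty}$ (closure taken in $\X_m^\infty$) if and only if $E$ and $F$ both belong to the same end $\mathcal{F}_R$ and $E$ is strictly closer to $R$ than $F$. For the ``only if'' direction I would combine the multiplicity formula of Proposition \ref{multiplicities} with the $m$-separating inequalities on the path between $E$ and $F$ in the dual graph: any one-parameter family joining an arc in $\X_{m,E}^\infty$ to one in $\X_{m,F}^\infty$ must specialize through arcs meeting an intermediate divisor $D$, and a case analysis on the location of $D$ (inside a trunk, across a rupture, or on the opposite side of a branching) forces $\ord_t f > m$ in the limit. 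For the ``if'' direction I would build an explicit deformation: starting from an arc on $F^\circ$, I perturb the Puiseux coefficients governing the path from $E$ to $F$ along $\mathcal{F}_R$ and apply iteratively the coordinate change \eqref{next-rupture-component}, exploiting the monotonicity of $\kappa/r$ along the end to move the generic member of the family onto $E$ while preserving $\ac f(\gamma(t)) = 1$ by a compensating rescaling of the leading coefficient on the $E$-chart.

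The step I expect to be hardest is this explicit deformation: one must track the arc through several coordinate systems along the chain from $E$ to $F$, verify that the generic member truly sits in $\X_{m,E}^\infty$ and not on an intermediate divisor, and control the angular component throughout. Once that is in place, the topological disjointness of the pieces listed in the theorem is the contrapositive of the characterization applied to pairs of $m$-divisors in different trunks or ends, and the identity $\frakZ_{m,R}^\infty = \overline{\X_{m,E_\dom(m,R)}^\infty}$ follows because $E_\dom(m,R)$ is by definition the $m$-divisor of $\mathcal{F}_R$ closest to $R$, so every other $m$-divisor of $\mathcal{F}_R$ lies past it toward the leaf and is therefore picked up exactly when one passes to the closure.
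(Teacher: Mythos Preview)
Your ``only if'' direction has a genuine gap. The assertion that a one-parameter family $(\gamma_s)$ with $\gamma_s \in \X_{m,E}^\infty$ for $s \neq 0$ and $\gamma_0 \in \X_{m,F}^\infty$ ``must specialize through arcs meeting an intermediate divisor $D$'' is not justified: the map $\gamma \mapsto \tilde\gamma(0)$ from arcs to lift-centers is not continuous, so nothing forces the family to visit intermediate divisors, and your intended contradiction ``$\ord_t f > m$ in the limit'' is impossible since $\gamma_0 \in \X_m^\infty$ by hypothesis. The $m$-separating inequalities only rule out arcs whose lift-center sits on an intersection point $E \cap D$; they do not by themselves control degenerations inside $\X_m^\infty$. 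The paper argues differently and more cleanly: for each branch $C^{[j]}$ the intersection number $m^{[j]} \coloneqq \gamma_s \cdot C^{[j]}$ is upper semicontinuous in $s$, and since $\sum_j m^{[j]} = m$ is constant each $m^{[j]}$ is in fact constant along the family. But $m^{[j]} = m_E N_E^{[j]}$, and Proposition~\ref{multiplicities} shows these numbers distinguish divisors in distinct trunks (or distinct divisors within a single trunk), whereas $N_E^{[j]}/\hr^E$ is independent of $E$ along an end, so they do \emph{not} separate divisors in a common end. This reduces both directions to the irreducible case already handled in \cite{budur2022nash}: no-jumping across trunks via semicontinuity of the fractional intersection multiplicities, and jumping within an end via approximate roots reducing to a toric situation and \cite[Lemma~3.11]{ishii2008}.

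Your ``if'' direction is the right shape, but note that the paper does not carry out the deformation by hand either: even for a single branch the argument in \cite{budur2022nash} passes through approximate roots rather than tracking Puiseux coefficients directly through the coordinate changes \eqref{next-rupture-component}, so your sketch is likely optimistic about how routine that step is.
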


\begin{proof}
    % We may rephrase the theorem as follows. Arcs in $\X_m^\infty$ cannot jump between different ends and trunks of the resolution, and they cannot jump between different divisors in the same trunk either. Furthermore, all arcs that lift to the end $\mathcal{F}_R$ can be reached by arcs jumping from $E_\dom(m,R)$.
    
    % {\color{blue}
    % We have that
    % \[
    % \X_m^\infty = \bigsqcup_{m_1+\dots+m_b = m} \X_{m_1,\dots,m_b},
    % \]
    % where
    % \begin{align*}
    %     \X_{m_1,\dots,m_b} \coloneqq \{\gamma \in \mathcal{L}_\infty(\C^2) \ | \ & \gamma(0) = 0, \ \gamma \cdot f_j = m_j \ \text{for every } j=1,\dots,b, \\
    %     & \ac(f_1(\gamma(t))) \dots \ac(f_b(\gamma(t))) = 1 \}.
    % \end{align*}
    % Moreover, observe that
    % \[
    % \X_{m_1,\dots,m_b} = \{\gamma \in \mathcal{L}_\infty(\C^2) \ | \ \gamma(0) = 0, \ \gamma \cdot f_j \geq m_j \ \text{for every } j=1,\dots,b \} \cap \X_m^\infty,
    % \]
    % so it is a Zariski-closed subset of $\X_m^\infty$. 
    
    % Let $E,F$ be two $m$-divisors not contained in the same $\mathcal{F}_R$ for any $R \in \mathcal{R}$. By contradiction, we suppose that there exists $\gamma \in \X_{m,E}^\infty \cap \overline{\X^\infty_{m,F}}$. There exist $m_1,\dots,m_b$ such that $m = m_1+\dots+m_b$ and $\gamma \in \X_{m_1,\dots,m_b}$. However, this contradicts \cite[Theorem 1.21]{budur2022nash} when applied to $m_j$ and $f_j$.
    % }
    Let $R' \in \mathcal{R}$ and $E \in \mathcal{T}_{R,S}$ for some $R,S \in \mathcal{R} \cup \mathcal{S}$. By \cite[Theorem A]{ein2004}, the subsets $\X_m^\infty$, $\mathfrak{Z}_{m,R'}^\infty$ and $\X_{m,E}^\infty$ are cylinders, i.e. there exists $\ell \gg 0$ such that $\X_m^\ell \coloneqq \pi_\ell(\X^\infty_m)$, $\mathfrak{Z}_{m,R'}^\ell \coloneqq \pi_\ell(\mathfrak{Z}_{m,R'}^\infty)$ and $\X_{m,E}^\ell \coloneqq \pi_\ell(\X_{m,E}^\infty)$ are constructible subsets of $\mathcal{L}_m(\C^2)$, and $\X_m^\infty = \pi_\ell^{-1}(\X_m^\ell)$, $\mathfrak{Z}_{m,R'}^\infty = \pi_\ell^{-1}(\mathfrak{Z}_{m,R'}^\ell)$ and $\X_{m,E}^\infty = \pi_\ell^{-1}(\X_{m,E}^\ell)$. Therefore, it is enough to show that the subsets $\mathfrak{Z}_{m,R'}^\ell$ and $\X_{m,E}^\ell$ are Zariski-closed in $\X^\ell_m$. In turn, since they are constructible, it suffices to check they are closed in the analytic topology of $\X_{m}^\ell$.

    Let $E, F \in \mathcal{E}$ be $m$-divisors, and suppose that $\X^\ell_{m,E} \cap \overline{\X^\ell_{m,F}} \neq \varnothing$, where the closure is taken in $\X^\ell_m$. By the curve selection lemma \cite[Lemma 3.1]{milnor1968}, there exists a complex analytic map $(\C,0) \to \X^\ell_m: s \mapsto \gamma_s^\ell$, such that $\gamma^\ell_0 \in \X^\ell_{m,E}$ and $\gamma^\ell_s \in \X^\ell_{m,F}$ for $s \neq 0$. Composing it with the section
    \[
    \mathcal{L}_\ell(\C^2) \longrightarrow \mathcal{L}_\infty(\C^2): \quad \left(\sum_{i=0}^\ell a_it^i, \sum_{i=0}^\ell b_it^i\right) \longmapsto \left(\sum_{i=0}^\ell a_it^i, \sum_{i=0}^\ell b_it^i\right),
    \]
    i.e. lifting each jet to an arc by considering every coefficient of $t^k$ for $k > \ell$ to be zero, we get an analytic family of arcs $\gamma_s$ such that $\gamma_0 \in \X_{m,E}$ and $\gamma_s \in \X_{m,F}$ for every $s \neq 0$. Note that
    \[
    m = \gamma_s \cdot C = \sum_{j = 1}^b \gamma_s \cdot C^{[j]} \quad \text{for all } s \in \C.
    \]
    For each $j = 1,\ldots,b$, the value $m^{[j]} \coloneqq \gamma_s \cdot C^{[j]}$ must be constant for all $s \in \C$ because of the upper semicontinuity of the intersection product. Therefore the family of arcs $\gamma_s$ lies in the $m^{[j]}$-th contact locus of $f^{[j]}$ for each $j = 1,\ldots,b$.

    Suppose that $E$ and $F$ do not lie in a common end $\mathcal{F}_R$. In that case there exists a $j = 1,\ldots,b$ such that the centers of $E$ and $F$ in the minimal $m^{[j]}$-separating log resolution of $C^{[j]}$ lie in different trunks or ends, or in different divisors of the same trunk. The results in \cite[\S 7.3]{budur2022nash} show that it is not possible for arcs in the $m^{[j]}$-th contact locus of the irreducible curve $f^{[j]}$ to jump between such divisors (i.e. between divisors in different trunks or ends, or divisors in the same trunk). 
    %The idea of the proof is to use the semicontinuity of the fractional intersection multiplicities that we computed in Proposition \ref{multiplicities}. 
    Hence the family $\gamma_s$ cannot exist, and it is not possible to jump between $E$ and $F$.

    To finish the proof we just need to show that all arcs in $\X_m^\infty$ that lift to an end $\mathcal{F}_R$ can be reached jumping from $E_\dom(m,R)$, i.e. that $\frakZ_{m,R}^\infty = \overline{\X_{m,E_\dom(m,R)}^\infty}$. To do this we are once again going to reduce the situation to the irreducible case, which was also proved in \cite[\S 7.3]{budur2022nash}. The idea here is to use Proposition \ref{multiplicities} to show that the intersection numbers $\gamma \cdot C^{[j]}$ are constant for $\gamma \in \frakZ_{m,R}^\infty$ and deduce that $\frakZ_{m,R}^\infty$ is precisely the set of arcs that lift to an end in the minimal resolution of $C^{[j]}$. Details are as follows. 
    
    Fix $\mathcal{F}_R$ and observe that the integer $N_E^{[j]}/\hr^E$ is independent of $E$ for all $E \in \mathcal{F}_R$. Indeed, for each branch $C^{[j]}$ and every $E \in \mathcal{F}_R$, either they are compatible in the sense of Definition \ref{omega}, or $\kappa^{[j]}_{\i+1} / r^{[j]}_{\i+1} \leq \kappa^E_{\i+1} / r^E_{\i+1}$. In either case, $\hr^E$ appears as a common factor in the formulas of Proposition \ref{multiplicities}, so $N_E^{[j]}/\hr^E$ is an integer. To see the independence, note that
    \[
    \kappa_i^E/\hr^E = \hkappa_i^E \hr_{i+1}^E \cdots \hr_{g^E-1}^E, \quad r_i^E/\hr^E = \hr_i^E \cdots \hr_{g^E-1}^E \quad \text{for every } i = 1, \ldots, g^E
    \]
    are independent of $E \in \mathcal{F}_R$, because all divisors in $\mathcal{F}_R$ share all Newton pairs except the last one. Applying this to the formulas in Proposition \ref{multiplicities} shows that $N_E^{[j]}/\hr^E$ is independent of the divisor $E$ for $E \in \mathcal{F}$.
    
    Since this independence holds for all $j = 1,\ldots,b$, the integer $N_E/\hr^E$ is also independent of $E$ for $E \in \mathcal{F}_R$. Let $\gamma \in \frakZ_{m,R}^\infty$ be an arc lifting to $E \in \mathcal{F}_R$. Then
    \[
    m^{[j]} \coloneqq \gamma \cdot C^{[j]} = N_E^{[j]} (\widetilde\gamma \cdot E) = \dfrac{N_E^{[j]}}{N_E} N_E (\widetilde\gamma \cdot E) = \dfrac{N_E^{[j]}}{N_E} (\gamma \cdot C) = \dfrac{N_E^{[j]}}{N_E} m = \dfrac{N_E^{[j]}/\hr^E}{N_E/\hr^E} m
    \]
    is independent of $E \in \mathcal{F}_R$. Note that there is at least one $j = 1,\ldots,b$ such that $R$ is also a rupture divisor in the minimal log resolution $\mu^{[j]}$ of $f^{[j]}$. Since $m^{[j]}$ is independent of $E \in \mathcal{F}_R$, we see that $\frakZ_{m,R}^\infty$ is precisely the set of arcs in the $m^{[j]}$-th contact locus of $f^{[j]}$ that lift to the end associated to $R$ in $\mu^{[j]}$. Therefore the problem reduces to the case of the irreducible curve $C^{[j]}$. Once again, the irreducible case was already proved in \cite[\S 7.3]{budur2022nash} --- the idea of the proof is to use approximate roots to reduce the situation to a curve with one Puiseux pair, and then apply \cite[Lemma 3.11]{ishii2008} for the divisorial valuations, which are now toric.
\end{proof}

\begin{remark}
    The value $N_E/\hr^E$ is in general \emph{not} independent of $E$ for $E \in \mathcal{T}_{R,S}$. Indeed, the argument that we used for $\mathcal{F}_R$ does not work for divisors in a trunk, since in that case there is at least one $j$ for which $\hkappa^E$ appears in the formula for $N_E^{[j]}$ instead of $\hr^E$.
\end{remark}

\begin{figure}[ht] 
    \centering
    \resizebox{.8\linewidth}{!}{%
    \fontsize{20pt}{20pt}\selectfont
    \def\svgheight{0.5cm}
    \import{./figures/}{resolution-graph.pdf_tex}
}
    \caption{Resolution graph of an $m$-separating log resolution of an irreducible curve. All arcs lifting to divisors in the same gray region belong to the same component of the decomposition of $\X_m^\infty$ given in Theorem \ref{arc-contact-loci-decomposition}.}
    \label{fig:resolution-graph}
\end{figure}

This theorem provides information about the topology of the lift of the contact loci to the arc space. To bring the result back down to the jet space, note that $\X_m = \pi_m(\X_m^\infty) = \pi_m(\pi_m^{-1}(\X_m))$, so the connected components of $\X_m$ are the images of the connected components of $\X_m^\infty$ and in particular the topological disjoint unions of Theorem \ref{arc-contact-loci-decomposition} are preserved.

\begin{corollary} \label{contact-loci-decomposition}
    Define $\X_{m,E} \coloneqq \pi_m(\X_{m,E}^\infty),\ \frakZ_{m,R} \coloneqq \pi_m(\frakZ_{m,R}^{\infty})$. Then the $m$-contact locus $\X_m^\infty$ of $f$ decomposes as a Zariski-topologically disjoint union
    \[
    \X_m = \left( \bigsqcup_{R \in \mathcal{R}} \frakZ_{m,R} \right) \sqcup \left( \bigsqcup_{R,S \in \mathcal{R} \cup \mathcal{S}} \bigsqcup_{E \in \mathcal{T}_{R,S}} \X_{m,E} \right).
    \]
    Moreover, each $\frakZ_{m,R}$ is nonempty if and only if there are $m$-divisors in the end $\mathcal{F}_R$, and in that case $\frakZ_{m,R} = \overline{\X_{m,E_\dom(m,R)}}$, where the closure is taken inside $\X_m$.
\end{corollary}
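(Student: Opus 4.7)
The plan is to deduce the corollary from Theorem \ref{arc-contact-loci-decomposition} by pushing the arc-space decomposition forward along the truncation $\pi_m$. The foundational observation is that the defining conditions $\ord_t f(\gamma)=m$ and $\ac f(\gamma)=1$ for $\X_m^\infty$ involve only the coefficients $\gamma_{i,0},\ldots,\gamma_{i,m}$, so $\X_m^\infty=\pi_m^{-1}(\X_m)$ and the restriction $\pi_m\colon\X_m^\infty\to\X_m$ is a trivial fibration with fibre $\mathbb{A}^\infty_\C$; in particular it is surjective, open, and has irreducible, connected fibres.

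The key technical point is the \emph{saturation} of each piece: $\X_{m,E}^\infty=\pi_m^{-1}(\X_{m,E})$ and hence $\frakZ_{m,R}^\infty=\pi_m^{-1}(\frakZ_{m,R})$. This is where the $m$-separating hypothesis enters. If two arcs $\gamma,\gamma'\in\X_m^\infty$ share the same $m$-jet, the condition $\ord_t f(\gamma)=m$ forces each lift $\tilde\gamma(0)$ to lie in the interior of an $m$-divisor; since the $m$-separating assumption isolates such divisors, the inductive computation of the lift---governed by the change-of-coordinates equation \eqref{next-rupture-component}---shows that $\tilde\gamma(0)$ is determined by $\pi_m(\gamma)$. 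This is the cylinder property underlying the Denef--Loeser change-of-variables formula, and I expect it to be the only place where any real work is required.

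Once saturation is established, the remainder is formal. Saturated disjoint unions descend to disjoint unions, so applying $\pi_m$ to the decomposition of Theorem \ref{arc-contact-loci-decomposition} yields the claimed decomposition of $\X_m$; since $\pi_m$ is a trivial $\mathbb{A}^\infty_\C$-fibration, the open-and-closed pieces of $\X_m^\infty$ correspond bijectively to open-and-closed pieces of $\X_m$, giving the topological disjointness. Non-emptiness transfers automatically via surjectivity. Finally, for the closure identity, identifying $\X_m^\infty\cong\X_m\times\mathbb{A}^\infty_\C$, every saturated subset has the form $B\times\mathbb{A}^\infty_\C$, and irreducibility of $\mathbb{A}^\infty_\C$ gives $\overline{B\times\mathbb{A}^\infty_\C}=\overline{B}\times\mathbb{A}^\infty_\C$. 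Applied to $\X_{m,E_\dom(m,R)}^\infty$, this converts the closure formula $\frakZ_{m,R}^\infty=\overline{\X_{m,E_\dom(m,R)}^\infty}$ from Theorem \ref{arc-contact-loci-decomposition} into $\frakZ_{m,R}=\overline{\X_{m,E_\dom(m,R)}}$ after projecting away the $\mathbb{A}^\infty_\C$-factor.
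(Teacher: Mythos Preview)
Your proposal is correct and follows the same route as the paper: both deduce the corollary by pushing the decomposition of Theorem~\ref{arc-contact-loci-decomposition} forward along the truncation $\pi_m$, using that $\X_m^\infty=\pi_m^{-1}(\X_m)$ is a trivial $\mathbb{A}^\infty_\C$-bundle over $\X_m$. The paper's argument is a one-line remark that $\pi_m$ is a coordinate projection and hence ``disjoint unions are preserved''; your explicit saturation claim $\X_{m,E}^\infty=\pi_m^{-1}(\X_{m,E})$ (the cylinder property, which indeed uses the $m$-separating hypothesis) is exactly the ingredient that justifies that remark, and your treatment of the closure identity via the product description is likewise a legitimate elaboration of what the paper leaves implicit.
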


Therefore in order to compute the topology of the contact locus $\X_m$ we only need to understand the topology of each of the sets in the decomposition. We are going to give explicit equations for each of these sets, and finally we will use the formula
\begin{equation} \label{XmE-dimension}
    \dim_{\C} \X_{m,E} = 2(m + 1) - m_E \nu_E - 1
\end{equation}
to count the number of free variables, see \cite[Lemma 2.6]{budur2020cohomology}.

\begin{lemma} \label{ac-equations}
    Let $\gamma(t)$ be an arc in $\C^2$ such that $\ord_t f(\gamma(t)) = m$, and whose lift to the resolution $\mu$ intersects the exceptional divisor $E \in \mathcal{E}$, with Newton pairs $(\hkappa_i^E, \hr_i^E)$, $i = 1,\ldots,g^E$, see Definition \ref{newton-pairs-E}. In the $(x_{g^E}^E, y_{g^E}^E)$ coordinates, see \eqref{last-blow-up-E}, this lift is of the form
    \[
    x_{g^E}^E(t) = \alpha t^{m_E} + O(t^{m_E+1}), \quad y_{g^E}^E(t) = \beta + O(t),
    \]
    where $\alpha \in \C^\times$ and $\beta \in \C$ avoids the values that would make $\widetilde\gamma$ intersect other exceptional divisors or strict transforms. Then, for each branch $C^{[j]}$ and depending on the positioning of $C^{[j]}$ with respect to $E$ in the sense of Definition \ref{left-right-up}, the leading coefficient of $f^{[j]}(t)$ is
    \[
    \ac f^{[j]}(t) = \begin{cases}
        \star \alpha^{N_E^{[j]}} \beta^{N_E^{[j]} a^E/\hr^E} & \text{if $\widetilde{C}^{[j]}$ lies to the left of $E$}, \\
        \star \alpha^{N_E^{[j]}} \beta^{a^E (S_\omega/\hr^E) + \, b^E r_{\omega+1}^{[j]}} \phantom{\left( \beta \right)^{r_{\i+2}^{[j]}}} & \text{if $\widetilde{C}^{[j]}$ lies to the right of $E$}, \\
        \star \alpha^{N_E^{[j]}} \beta^{a^E (S_\omega/\hr^E) + \, b^E r_{\omega+1}^{[j]}} \left(z_u + \beta \right)^{r_{\i+2}^{[j]}} & \text{if $\widetilde{C}^{[j]}$ lies in $u \in U^E$ above $E$.}
        %\star \alpha^{N_E^{[j]}} \left(z_u + \beta \right)^{r_{\i+1}^{[j]}} & \text{if $\widetilde{C}^{[j]}$ lies in $u \in U^E$ above $E \in \mathcal{R}_1$}.
    \end{cases}
    \]
    Here the stars $\star$ represent nonzero complex numbers which do not depend on $\alpha, \beta$; we define $S_\omega \coloneqq \kappa_1^E r_1^{[j]} + \cdots + \kappa_\omega^E r_\omega^{[j]}$; and $z_u$ is the $y$-coordinate at which the divisor in $u \in U^E$ meets $E$. %; and $F$ is the divisor between $L_{g^E - 1}^E$ and $L_{g^E - 1}^{\prime E}$ corresponding to the pair $(b_{g^E}^E, a_{g^E}^E)$.
\end{lemma}

\begin{proof}
    The computation is essentially the same as the one in the proof of Proposition \ref{multiplicities}, but keeping track of the angular components instead of the orders. For a cleaner notation, let $(b^E, a^E) \coloneqq (b_{g^E}^E, a_{g^E}^E)$. Using \eqref{last-blow-up-E} and \eqref{next-rupture-component} we get
    \[
    \ac x_{g^E-1}^E(t) = \alpha^{\hr^E} \beta^{a^E}, \quad \ac y_{g^E-1}^E(t) = \alpha^{\hkappa^E} \beta^{b^E},
    \]
    \[
    \ac x_i^E(t) = \star \alpha^{r_{i+1}^E} \beta^{r_{i+1}^E a^E / \hr^E}, \quad \ac y_i^E(t) = \star \alpha^{\kappa_{i+1}^E} \beta^{\kappa_{i+1}^E a^E/\hr^E}, \quad \text{for all } i = 1, \ldots, g^E - 2.
    \]
    The stars denote some products of powers of the constants $A_i^E$, which are nonzero and do not depend on $\alpha, \beta$. For each $j = 1,\ldots,b$ we know the vertices of the Newton polygon of $P_{\xi,\i}^{[j]}$ by Lemma \ref{pxi-decomposition}, and hence we know that
    \[
    \ac P_{\xi,\i}^{[j]}(t) = \ac \left(\star x_\i^{[j]}(t)^{\kappa_{\i+1}^{[j]}/r_{\i+1}^{[j]}} + \star y_\i^{[j]}(t)\right).
    \]
    The angular component of a sum of two power series is the angular component of the one with smaller order, or the sum of angular components if both of them have the same order. Note that which of them has a smallest order will depend on
    \begin{enumerate*}[(i)]
        \item whether $\i < g^E - 1$ or $\i = g^E - 1$; 
        \item whether $E$ and $C^{[j]}$ are compatible or not; and
        \item whether $\kappa_{\i+1}^{[j]}/r_{\i+1}^{[j]}$ is bigger, smaller or equal to $\kappa_{\i+1}^E/r_{\i+1}^E$.
    \end{enumerate*}
    In fact, if $E$ and $C^{[j]}$ are \emph{not} compatible or $\i < g^E - 1$, we see that $C^{[j]}$ is to the left of $E$ (essentially because the path from $E$ to $\widetilde{C}^{[j]}$ in the dual graph starts by going in the direction of the root). Otherwise, if $E$ and $C^{[j]}$ are compatible and $\i = g^E - 1$, the remaining condition (iii) gives the different possibilities:
    \begin{enumerate}[label=(\arabic{*}), wide, labelindent=0pt]
        \item If $\kappa_{\i+1}^{[j]}/r_{\i+1}^{[j]} < \kappa_{\i+1}^E/r_{\i+1}^E$, then $\widetilde C^{[j]}$ is to the left of $E$.

        \item If $\kappa_{\i+1}^{[j]}/r_{\i+1}^{[j]} = \kappa_{\i+1}^E/r_{\i+1}^E$, then $\widetilde C^{[j]}$ is above $E$.

        \item If $\kappa_{\i+1}^{[j]}/r_{\i+1}^{[j]} > \kappa_{\i+1}^E/r_{\i+1}^E$, then $\widetilde C^{[j]}$ is to the right of $E$.

        %\item If $\kappa_{\i+1}^{[j]}/r_{\i+1}^{[j]} > \kappa_{\i+1}^E/r_{\i+1}^E$ and $E \in \mathcal{R}_1$, then $C^{[j]}$ above $E$.
    \end{enumerate}

    Since the full proof would involve doing the same computation over and over again, we only show here how to tackle the case where $E \in \mathcal{R}_0$ and $\widetilde{C}^{[j]}$ lies above $E$, which contains the difficulties that appear in all other cases. This corresponds to the case where
    \begin{enumerate*}[(i)]
        \item $\i = g^E - 1$,
        \item $E$ and $C^{[j]}$ are compatible, and
        \item $\kappa_{\i+1}^{[j]}/r_{\i+1}^{[j]} = \kappa_{\i+1}^E/r_{\i+1}^E$.
    \end{enumerate*}
    Note that we have $(x_i^{[j]}, y_i^{[j]}) = (x_i^E, y_i^E)$ for all $i = 1, \ldots, \i$ and also
    \[
    (x_{\i+1}^{[j]}, y_{\i+1}^{[j]}) = (x_{\i+1}^E, y_{\i+1}^E + z_u + \star x_{\i+1}^E)
    \]
    where $z_u$ is the $y$-coordinate of the point in $E$ that has to be blown up to continue the resolution of $C^{[j]}$ and $\star$ is a complex number that we do not care about. Therefore $\ord_t x_{\i+1}^{[j]}(t) > 0$ and $\ord_t y_{\i+1}^{[j]}(t) = 0$, and thus
    \[
    \ac P_{\xi,\i+1}^{[j]}(t) = \star \ac y_{\i+1}^{[j]}(t) = \star (z_u + \beta).
    \]
    Let $F$ be the divisor between $L_{g^E - 1}^E$ and $L_{g^E - 1}^{\prime E}$ corresponding to the pair $(b^E, a^E)$. Then we have $\kappa_i^F = \kappa_i^E a^E / \hr^E$ for all $i = 1,\ldots,\i$ and $\kappa_{\i+1}^F = b^E$. Using the decomposition given in Lemma \ref{pxi-decomposition} and the formula of Proposition \ref{multiplicities}, we compute
    \begin{align*}
        \ac f^{[j]}(t) &= \star \ac \left( y_0^{[j]}(t)^{r_1^{[j]}} \cdots y_{\i-1}^{[j]}(t)^{r_\i^{[j]}} y_\i^{[j]}(t)^{r_{\i+1}^{[j]}} \prod_{\xi \in \bmu_{r_{\i+2}^{[j]}}} P_{\xi,\i+1}^{[j]}(t) \right) = \\ 
        &= \star \alpha^{\kappa_1^E r_1^{[j]} + \cdots + \kappa_\i^E r_\i^{[j]} + \kappa_{\i+1}^E r_{\i+1}^{[j]}} \beta^{\frac{a^E}{\hr^E} \left(\kappa_1^E r_1^{[j]} + \cdots + \kappa_\i^E r_\i^{[j]}\right) + b^E r_{\i+1}^{[j]}} (z_u + \beta)^{r_{\i+2}^{[j]}} = \\ 
        &= \star \alpha^{N_E^{[j]}} \beta^{a^E (S_\omega/\hr^E) + \, b^E r_{\omega+1}^{[j]}} (z_u + \beta)^{r_{\i+2}^{[j]}},
    \end{align*}
    as we wanted to show. The other cases are similar.
\end{proof}

\begin{theorem} \label{XmE-computation}
    Let $E$ be an exceptional divisor in the trunk $\mathcal{T}_{R,S}$ for some $R, S \in \mathcal{R} \cup \mathcal{S}$. Then,
    \[
    \X_{m,E} \cong \bigsqcup^{c(\mathcal{T}_{R,S})} \C^\times \times \C^{2m - m_E \nu_E}.
    \]
\end{theorem}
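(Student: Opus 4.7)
The plan is to parametrize $\X_{m,E}^\infty$ explicitly in the chart $(x,y) := (x_{g^E}^E, y_{g^E}^E)$ of Definition \ref{newton-pairs-E}, in which $E = \{x = 0\}$ and the Farey-adjacent divisor $F = \{y = 0\}$, corresponding to the pair $(b_{g^E}^E, a_{g^E}^E)$, meets $E$ at the origin. This chart covers all of $E^\circ$: the only other intersection of $E$ with an adjacent divisor of the minimal log resolution lies at $\beta = \infty$, and because $E$ sits in the trunk $\mathcal{T}_{R,S}$ there are no rupture divisors attached that would force additional $\beta$-values to be removed. Every lift $\tilde\gamma$ of an arc in $\X_{m,E}^\infty$ therefore takes the form
\[
x(t) = \alpha t^{m_E} + \sum_{k > m_E} x_k t^k, \qquad y(t) = \beta + \sum_{k \geq 1} y_k t^k,
\]
with $\alpha \in \C^\times$ (so that $\ord_t f(\gamma(t)) = m$) and $\beta \in \C^\times$ (so that $\tilde\gamma(0) \in E^\circ$), and conversely every such tuple satisfying the constraint below defines an element of $\X_{m,E}^\infty$.

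Because $E$ is not a rupture divisor, Definition \ref{left-right-up} forces every branch $\tilde{C}^{[j]}$ to sit either to the left or to the right of $E$. In either case Lemma \ref{ac-equations} gives
\[
\ac f^{[j]}(\gamma(t)) = \star\, \alpha^{N_E^{[j]}}\, \beta^{N_F^{[j]}},
\]
with $\star \in \C^\times$ independent of $(\alpha, \beta)$ and of the higher-order coefficients. Multiplying over $j = 1, \dots, b$ and using $N_E = \sum_j N_E^{[j]}$, $N_F = \sum_j N_F^{[j]}$ produces $\ac f(\gamma(t)) = c\, \alpha^{N_E} \beta^{N_F}$ for some $c \in \C^\times$ determined by the resolution data, so the condition $\ac f(\gamma(t)) = 1$ reduces to the single binomial equation $c\, \alpha^{N_E} \beta^{N_F} = 1$ on $(\alpha, \beta) \in (\C^\times)^2$.

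Writing $d := \gcd(N_E, N_F)$, $N_E = d\, n_E$, $N_F = d\, n_F$ with $\gcd(n_E, n_F) = 1$, the map $(\alpha,\beta) \mapsto \alpha^{N_E} \beta^{N_F}$ factors through the $d$-th power of the surjection $(\alpha,\beta) \mapsto \alpha^{n_E} \beta^{n_F}$, whose kernel is connected and one-dimensional (parametrized by $\lambda \mapsto (\lambda^{n_F}, \lambda^{-n_E})$, thanks to $\gcd(n_E, n_F) = 1$). The solution set is thus the disjoint union of the $d$ preimages of the $d$-th roots of $c^{-1}$, each a copy of $\C^\times$. By Definition \ref{gcds} and Farey-sum invariance of the gcd through any $m$-separating blow-ups inserted between $E$ and $F$, this $d$ is exactly $d(\mathcal{T}_{R,S})$, producing the $\bigsqcup^{d(\mathcal{T}_{R,S})} \C^\times$ part of the decomposition.

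The higher-order coefficients $x_k$ $(k > m_E)$ and $y_k$ $(k \geq 1)$ are unconstrained, and only finitely many of them contribute to the $m$-jet of $\gamma = \mu \circ \tilde\gamma$. To convert this into the $\C^{2m - m_E \nu_E}$ factor I would invoke \cite[Lemma 2.6]{budur2020cohomology}, which presents the truncation $\X_{m,E}^\infty \to \X_{m,E}$ as a trivial affine-space bundle whose fiber dimension along $E$ equals $m_E(\nu_E - 1)$. Coupled with the dimension $2m + 1 - m_E$ of the relevant upstairs $m$-jet locus, the image has dimension $2m + 1 - m_E \nu_E$, yielding the desired $\C^\times \times \C^{2m - m_E \nu_E}$ factor in each connected component. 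I expect the main obstacle to be precisely this last step: tracing, through the blow-up chain \eqref{next-rupture-component}--\eqref{last-blow-up-E}, which upstairs coefficients project onto independent directions of the downstairs $m$-jet, and confirming that the Jacobian correction is exactly $m_E \nu_E$, with $\nu_E = k_{g^E}^E + r_1^E$ as in Proposition \ref{log-discrepancies}.
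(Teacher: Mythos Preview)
Your proposal is correct and follows essentially the same route as the paper: parametrize lifts in the $(x_{g^E}^E, y_{g^E}^E)$ chart, use Lemma~\ref{ac-equations} to reduce the angular-component condition to the binomial equation $\alpha^{N_E}\beta^{N_F}=\text{const}$ on $(\C^\times)^2$, identify $\gcd(N_E,N_F)$ with $d(\mathcal{T}_{R,S})$ via adjacency of $E$ and $F$, and then count free parameters via \cite[Lemma~2.6]{budur2020cohomology}. The only point where the paper is more explicit is precisely your stated ``main obstacle'': rather than tracing the Jacobian by hand, the paper invokes \cite[Lemma~3.1]{budur2020cohomology}, which shows that on jets of constant contact order the map induced by each blow-up is merely truncation of higher-order coefficients, so the only constraint is the binomial equation on $(\alpha,\beta)$ and the remaining dimension is read off directly from \cite[Lemma~2.6]{budur2020cohomology}.
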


\begin{proof}
    Let $\mu_\infty: \mathcal{L}_\infty(Y) \to \mathcal{L}_\infty(\C^2)$ be the map induced on arcs by the resolution. Note that $\mu_\infty^{-1}(\X_{m,E}^\infty)$ is the set of arcs $\widetilde\gamma(t)$ in the $(x_{g^E}^E, y_{g^E}^E)$ plane such that
    \[
    x_{g^E}^E(t) = \alpha t^{m_E} + O(t^{m_E+1}), \quad y_{g^E}^E(t) = \beta + O(t) \quad \text{and} \quad \ac (f \circ \mu)(t) = 1.
    \]
    Since $E$ is in a trunk, $E$ is not a rupture component. Therefore any $\widetilde{C}^{[j]}$ lies either to the left or to the right of $E$. In this situation, Lemma \ref{ac-equations} says that
    \[
    \ac (f \circ \mu)(t) = \prod_{j} \ac (f^{[j]} \circ \mu)(t) = \star \alpha^{\hr^E K_1 + \hkappa^E K_2} \beta^{a^E K_1 + b^E K_2},
    \]
    where $\star$ is a nonzero complex number, and the integers $K_1, K_2$ are given by the formulas
    \[
    K_1 = \sum_{j \in \mathcal{S}^E_\leftarrow} \dfrac{N_E^{[j]}}{\hr^E} + \sum_{j \in \mathcal{S}^E_\rightarrow} \dfrac{S_\omega}{\hr^E}, \quad K_2 = \sum_{j \in \mathcal{S}^E_\rightarrow} r_{\omega+1}^{[j]}.
    \]
    Here, we are using the fact that $N_E^{[j]} = S_\i + \hkappa^E r_{\i+1}^{[j]}$ for every $j \in S^E_\rightarrow$ (deduced from Proposition \ref{multiplicities}).
    
    By \eqref{kappa-r-b-a-matrix}, the $2 \times 2$ matrix with columns $(\hkappa^E, \hr^E)$ and $(b^E, a^E)$ is invertible over $\Z$, and therefore $\gcd(\hr^E K_1 + \hkappa^E K_2 , a^E K_1 + b^E K_2) = \gcd(K_1, K_2)$. On the other hand, let $F$ be a divisor such that $E \cap F \neq \varnothing$. Then the matrix with columns $(\hkappa^E, \hr^E)$ and $(\hkappa^F, \hr^F)$ is also invertible over $\Z$ (this follows by induction on the coprime pair $(\kappa, r)$, since either $F$ is the exceptional divisor of blowing up the intersection of $E$ with another divisor, or vice versa). Applying Proposition \ref{cE-independence}.\ref{cE-independence-trunk} we get
    \begin{equation} \label{K1-K2-gcd}
    c(\mathcal{T}_{R,S}) = c(E) = \gcd(N_E, N_F) = \gcd(\hr^E K_1 + \hkappa^E K_2, \hr^F K_1 + \hkappa^F K_2) = \gcd(K_1, K_2)
    \end{equation}
    and we conclude that the equation $\ac(f \circ \mu)(t) = 1$ defines $c(\mathcal{T}_{R,S})$ disjoint copies of $\C^\times$ in the $(\alpha, \beta)$-plane.
    
    % Since $E$ is in a trunk, $E$ is not a rupture component. Therefore any $\widetilde{C}^{[j]}$ lies either to the left or to the right of $E$. In this situation, Lemma \ref{ac-equations} says that, up to an unimportant constant factor $\star$, we have
    % \[
    % \ac (f \circ \mu)(t) = \prod_{j} \ac (f^{[j]} \circ \mu)(t) = \alpha^{N_E} \beta^{N_F},
    % \]
    % where $F$ is the divisor between $L_{g^E - 1}^E$ and $L_{g^E - 1}^{\prime E}$ corresponding to the pair $(b_{g^E}^E, a_{g^E}^E)$. In the language of the proof of Proposition \ref{blowup-formula}, the divisor $E$ corresponds to a certain sequence $q_1, \ldots, q_n$ in the Stern-Brocot tree, which indicates the sequence of blow-ups making $E$ appear starting from $L_{g^E-1}^E$ and $L_{g^E - 1}^{\prime E}$. The proof of \ref{blowup-formula} shows that $F$ is then the divisor corresponding to the sequence $q_1, \ldots, q_{n-1}$. In particular, 
    % %$E$ is the exceptional divisor of the blow-up of the intersection of $F$ and another divisor. In other words,
    % $E$ and $F$ have been adjacent at some point of the resolution process. Therefore $c(\mathcal{T}_{R,S}) = \gcd(N_E,N_F)$. Note that the equation $\alpha^{N_E} \beta^{N_F} = 1$ defines $\gcd(N_E, N_F)$ disjoint copies of $\C^\times$.

    Now we have to translate this result down from arcs to jets, and from the resolution to $\C^2$. The second proof of \cite[Lemma A.1]{budur2020cohomology} shows that, provided we look at jets with constant order of contact with an exceptional divisor, the map induced by a single blow-up on jets is just the truncation of some of the higher-order variables of one of the components. Since all $\widetilde\gamma \in \mu_\infty^{-1}(\X_{m,E}^\infty)$ intersect $E$ with order $m_E$, we may apply this result for all blow-ups of the resolution. Note that the only coefficients of $\widetilde\gamma$ that have to satisfy an equation are the initial ones, $\alpha$ and $\beta$, which never get truncated. The coefficients of the higher order terms are free. 

    More precisely, denoting by $\mu_m: \mathcal{L}_m(Y) \to \mathcal{L}_m(\C^2)$ the map induced by the resolution on jets, \textit{loc. cit.} says that $\mu_m^{-1}(\X_{m,E}) = \pi_m(\mu_\infty^{-1}(\X_{m,E}^\infty))$, and that the map $\mu_m$ just truncates some of the free variables. Therefore $\X_{m,E}$ is isomorphic to $\sqcup^{c(\mathcal{T}_{R,S})} \C^\times \times \C^M$ for some $M \in \N$. Now we just need to use formula \eqref{XmE-dimension} for the dimension of $\X_{m,E}$ to conclude that $M = 2(m+1) - m_E \nu_E - 2$ as we wanted.
\end{proof}

In the following theorem, we will denote by $\mathrm{val}(R)$ the valency of a rupture divisor $R \in \mathcal{R}$ in the dual graph of $\mu^\ast C$, i.e. the number of vertices connected to it.

%rupture-topology}
%{R}$ be a rupture divisor.
%
%$m$-divisor, then
%
%\bigsqcup^{c(R)} T \times \C^{2m - m_R \nu_R},
%
%act Riemann surface with $\tfrac{1}{c(R)}\sum_{S \in \mathcal{N}_R} c(\mathcal{T}_{R,S})$ points removed and Euler characteristic 
%
%ses}
% N_R(2 - \mathrm{val}(R)) & \text{if $R$ has no end attached}, \\ 
% (N_R(2 - \mathrm{val}(R)) + c(\mathcal{F}_R)) & \text{if $R$ has one end attached}, \\ 
% (N_R(2 - \mathrm{val}(R)) + c(\mathcal{F}_{R,1}) + c(\mathcal{F}_{R,2})) & \text{if $R$ has two ends attached}.
%
%
%
% an $m$-divisor but there are $m$-divisors in $\mathcal{F}_R$, then
%
%\bigsqcup^{c(\mathcal{F}_R)} \C^{2m - m_{E_\dom(m,R)} \nu_{E_\dom(m,R)} + 1}.
%
%case in which $R$ has two ends attached, the disjoint union is taken over $c(\mathcal{F}_{R,i})$, where $i=1,2$ is such that $\mathcal{F}_{R,i}$ is the end containing $E_\dom(m,R)$.
%
% there are no $m$-divisors in $\mathcal{F}_R$, then $\frakZ_{m,R} = \varnothing$.

\begin{theorem} \label{rupture-topology}
    Let $R \in \mathcal{R}$ be a rupture divisor.
\begin{enumerate}[(i)]
    \item If $R$ is an $m$-divisor, then
    \[
    \frakZ_{m,R} \cong \bigsqcup^{c(R)} T \times \C^{2m - m_R \nu_R},
    \]
    where $T$ is a non-compact Riemann surface of genus
    \[
    g(T) = 1+\frac{1}{2c(R)} \Bigg( N_R(\mathrm{val}(R)-2)-\sum_{\substack{E \in (\mathcal{E} \cup \mathcal{S})\setminus \{R\} \\ R \cap E \neq \varnothing}} \gcd(N_R,N_E) \Bigg)
    \]
    with
    $
    \frac{1}{c(R)}\sum_{S \in \mathcal{N}_R} \gcd(N_R,N_S)
    $
    punctures.

    \item If $R$ is not an $m$-divisor but there are $m$-divisors in $\mathcal{F}_R$, then
    \[
    \frakZ_{m,R} \cong \bigsqcup^{c(\mathcal{F}_R)} \C^{2m - m_{E_\dom(m,R)} \nu_{E_\dom(m,R)} + 1}.
    \]
    In the exceptional case in which $R$ has two ends attached, the disjoint union is taken over $c(\mathcal{F}_{R,i})$, where $i=1,2$ is such that $\mathcal{F}_{R,i}$ is the end containing $E_\dom(m,R)$.

    \item Otherwise, if there are no $m$-divisors in $\mathcal{F}_R$, then $\frakZ_{m,R} = \varnothing$.
\end{enumerate}
\end{theorem}

\begin{proof}
    By the same arguments as in the proof of Theorem \ref{XmE-computation}, for each $E \in \mathcal{F}_R$ the leading term $\ac(f \circ \mu)(t)$ is a polynomial in $\alpha, \beta$ that we can compute using Lemma \ref{ac-equations}, and the space $\X_{m,E}$ is isomorphic to $\{(\alpha, \beta) \in \C^2 \mid \ac(f \circ \mu)(t) = 1\} \times \C^M$. Using \eqref{XmE-dimension} we know $M = 2m - m_E \nu_E$, so all we need to do is study the curve given by $\{\ac(f \circ \mu)(t) = 1\}$.

    Since the computations for $R$ in $\mathcal{R}_0$ and $\mathcal{R}_1$ are completely analogous, let us focus in the case $R \in \mathcal{R}_0$. By Lemma \ref{ac-equations},
    \begin{equation} \label{ac-rupture-equation}
    \ac(f \circ \mu)(t) = \star \alpha^{\hr^R K_1 + \hkappa^R K_2} \beta^{a^R K_1 + b^R K_2} \prod_{u \in U^R} (z_u + \beta)^{\sum_{j \in \mathcal{S}^R_u} r_{\i+2}^{[j]}}.
    \end{equation}
    where $\star$ is a nonzero complex number and the integers $K_1, K_2$ are given by the formulas
    \[
    K_1 = \sum_{j \in \mathcal{S}^R_\leftarrow} \dfrac{N_R^{[j]}}{\hr^R} + \sum_{j \in \mathcal{S}^R_\rightarrow} \dfrac{S_\omega}{\hr^R} + \sum_{u \in U^R} \sum_{j \in \mathcal{S}^R_u} \dfrac{S_\omega}{\hr^R}, \quad K_2 = \sum_{j \in \mathcal{S}^R_\rightarrow} r_{\omega+1}^{[j]} + \sum_{u \in U^R} \sum_{j \in \mathcal{S}^R_u} \dfrac{S_\omega}{\hr^R}.
    \]
    The curve $\{\ac(f \circ \mu)(t) = 1\}$ is smooth (which can be checked by differentiating with respect to $\alpha$) and has as many connected components as the greatest common divisor of the exponents in the right-hand side of equation \eqref{ac-rupture-equation} above, which we compute now.

    For each $u \in U^R$ and each $j \in \mathcal{S}^R_u$, Proposition \ref{multiplicities} gives the following formula for divisors $E_{(\kappa, r)}$ between $L_{\i+1}^{[j]}$ and $L_{\i+1}^{\prime [j]} = R$:
    \[
    N_{E_{(\kappa, r)}}^{[j]} = r N_R^{[j]} + \kappa r_{\i+2}^{[j]}.
    \]
    In each connected component $u \in U^R$, there is precisely one divisor which is adjacent to $R$, call it $E_u$. Its corresponding coprime pair $(\kappa_u, r_u)$ must have $\kappa_u = 1$. Since we also know that for $j \not\in \mathcal{S}^R_u$ we have $N_{E_{(\kappa,r)}}^{[j]} = r N_R^{[j]}$, we conclude
    \[
    N_{E_u} = N_{E_{(\kappa_u, r_u)}} = \sum_{j = 1}^b N^{[j]}_{E_{(\kappa_u, r_u)}} = r_u N_R + \sum_{j \in \mathcal{S}_u^R} r_{\i+2}^{[j]},
    \]
    and therefore $\gcd(N_R, \sum_{j \in \mathcal{S}_u^{[j]}} r_{\i+2}^{[j]}) = \gcd(N_R, N_{E_u}) = c(\mathcal{T}_{R,S})$, where $S \in \mathcal{N}_R$ is the unique rupture divisor or strict transform which is a neighbor of $R$ and lies in the component $u \in U^R$. On the other hand, using Proposition \ref{multiplicities} to obtain the formula $N_R = \hr^R K_1 + \hkappa^R K_2$ and denoting by $E$ the only divisor to the right of $R$ such that $E \cap R \neq \varnothing$, we have by the same argument that we used to prove \eqref{K1-K2-gcd} that
    \begin{equation} \label{gcd-computation-1}
    \gcd(\hr^R K_1 + \hkappa^R K_2, a^R K_1 + b^R K_2) = \gcd(K_1, K_2) = \gcd(N_R, N_E).
    \end{equation}
    
    Therefore the gcd of the exponents in $\ac(f \circ \mu)(t)$ is equal to the gcd of $N_R$ and all the $N_E$, where $E$ runs through all divisors adjacent to $R$ \emph{except for one} (the divisor $F$ immediately left of $R$). Nevertheless, by Proposition \ref{cE-independence}.\ref{cE-simply} 
    % $\sum_{E \in \mathcal{E} \cup \mathcal{S} \setminus \{R\},\, E \cap R \neq \varnothing} N_E$ is a multiple of $N_R$ (see Remark \ref{NR-divides-sum-of-NE}). Hence
    removing one of the $N_E$ from the computation of the gcd does not change the result, and we conclude that $\{\ac(f \circ \mu)(t) = 1\}$ has $c(R)$ connected components.

    The equation of each connected component is simply
    \[
    \alpha^{N_R/c(R)} \beta^{(a^R K_1 + b^R K_2)/c(R)} \prod_{u \in U^R} (z_u + \beta)^{\left(\sum_{j \in \mathcal{S}^R_u} r_{\i+2}^{[j]}\right)/c(R)} = \zeta,
    \]
    where $\zeta$ runs over the $c(R)$-th roots of unity. Projecting to the $\beta$ coordinate we obtain a covering of degree $N_R/c(R)$ over $\C \setminus (\{0\} \cup \{-z_u \mid u \in U\})$, i.e. over $\P^1$ with $\mathrm{val}(R)$ points removed, from which we can obtain the genus using the Riemann-Hurwitz formula as follows. 

    The ramification index at the point $-z_u$ is the greatest common divisor of the exponents of $\alpha$ and $(z_u + \beta)$, which we have already shown to be $c(\mathcal{T}_{R,S})/c(R)$, where $S \in \mathcal{N}_R$ is the neighbor of $R$ living in the component $u \in U^R$. The ramification index at $0$ is the greatest common divisor of the exponents of $\alpha$ and $\beta$, which by \eqref{gcd-computation-1} is $\gcd(N_R, N_E)/c(R)$ where $E$ is the unique divisor adjacent to $R$ and to the right of $R$. Finally, the ramification index at infinity is the greatest common divisor of $N_R/c(R)$ and the sum of all other exponents, which is $\gcd(N_R, N_F)/c(R)$ where $F$ is the unique divisor adjacent to $R$ and to the \emph{left} of $R$ (once again, this is because the sum of all $N_{E'}$ where $E'$ runs through the divisors adjacent to $R$ is a multiple of $N_R$). This way we have counted the points that $\{\ac(f \circ \mu)(t) = 1\}$ is missing to be compact.

    Recall from Theorem \ref{contact-loci-decomposition} that $\frakZ_{m,R} = \overline{\X_{m,E_\dom(m,R)}}$. The effect of this closure in the topology is to ``fill in'' the missing points of $\{\ac(f \circ \mu)(t) = 1\}$ that lie above the point which corresponds to the intersection with the end $\mathcal{F}_R$. This can be seen for instance by repeating the computation of Lemma \ref{ac-equations} but considering the coefficients in the $(x_{g^R-1}^R, y_{g^R-1}^R)$. From here the result follows for $R \in \mathcal{R}_0$. The computation for $R \in \mathcal{R}_1$ is totally analogous.
\end{proof}

\section{Floer homology of the Milnor fibration} \label{section:floer-homology}

In this section assume $f$ is a reduced convergent power series. As explained in the introduction, the Milnor fiber $\F = M_f$ is naturally a Liouville domain, and moreover, the Milnor fibration admits a monodromy $\varphi: \F \to \F$ and a grading on $\varphi$ so that $(\F,\lambda,\varphi)$ is a graded abstract contact open book. The goal of this section is to compute the Floer homology groups $\HF_*(\varphi^m,+)$ for every $m \geq 1$.

The main tools we use to do so are the symplectic monodromy at radius zero of Fernández de Bobadilla and Pe{\l}ka \cite{bobadilla2022}, which enhances A'Campo's model of the Milnor fiber \cite{acampo1975} with a symplectic structure and a symplectic monodromy with good dynamical properties; and the McLean spectral sequence \cite[\S 4]{mclean2019}, \cite[Proposition 6.3]{bobadilla2022}, which makes use of those dynamical properties to compute the Floer homology. To study the degeneration properties of the McLean spectral sequence, we adapt the ideas of \cite{seidel1996} to our setting.

\subsection{Graded abstract contact open books and Floer homology}

For the convenience of the reader, we recall the main notions about fixed-point Floer homology for Liouville domains that will be used in the rest of the paper. We follow \cite{dostoglou-salamon94}, \cite{mclean2019}, \cite{seidel2001}, \cite{uljarevic2014}.

\subsubsection{Graded abstract contact open books}

A \emph{Liouville domain} is a pair $(M,\lambda)$, where $M$ is a compact connected manifold with boundary and $\lambda$ is a 1-form on $M$, such that $d\lambda$ is a symplectic form, and the Liouville vector field $X_\lambda$ (i.e. the vector field defined by $d\lambda(X_\lambda,\cdot) = \lambda$) points outwards on $\partial M$. We say that a diffeomorphism $\varphi: M \to M$ is \emph{exact} if $\varphi^\ast\lambda-\lambda$ is an exact 1-form on $M$. In that case, a function $\mathcal{a}: M \to \R$ such that $\varphi^\ast \lambda-\lambda = -d\mathcal{a}$ is called an \emph{action} of $\varphi$. It is unique up to an additive constant. A diffeomorphism $\varphi: M \to M$ is said to be \emph{compactly supported} if it equals the identity on a neighborhood of $\partial M$. An \emph{abstract contact open book} \cite[Definition 3.12]{mclean2019} is a tuple $(M,\lambda,\varphi)$ such that $(M,\lambda)$ is a Liouville domain, and $\varphi$ is a compactly supported exact symplectomorphism.

Let $(M,\omega)$ a symplectic manifold of dimension $2n$ and $\mathrm{Fr}(M)$ its symplectic frame bundle, see \cite[Definition 3.2]{mclean2019}. Recall that it is a $\mathrm{Sp}(2n)$-principal bundle. A \emph{grading} on a symplectic manifold $(M,\omega)$ is a $\widetilde{\mathrm{Sp}}(2n)$-principal bundle $\widetilde{F}$ together with an isomorphism $\widetilde{F} \times_{\widetilde{\mathrm{Sp}}(2n)} \mathrm{Sp}(2n) \to \mathrm{Fr}(M)$. Here, $\widetilde{\mathrm{Sp}}(2n)$ denotes the universal cover of $\mathrm{Sp}(2n)$. A symplectic manifold together with a grading is said to be \emph{graded}.

Assume that $M$ is graded and let $\varphi: M \to M$ be a symplectomorphism. Denote by $\widetilde{F}$ the grading on $M$. We have a natural isomorphism $\beta: \mathrm{Fr}(M) \to \varphi^\ast \mathrm{Fr}(M)$ of $\mathrm{Sp}(2n)$-principal bundles. A \emph{grading} on $\varphi$ is an isomorphism $\widetilde{\beta}: \widetilde{F} \to \varphi^\ast \widetilde{F}$ of $\widetilde{\mathrm{Sp}}(2n)$-principal bundles such that $\widetilde{\beta} \times_{\widetilde{\mathrm{Sp}}(2n)} \mathrm{Sp}(2n) = \beta$. A symplectomorphism together with a grading is said to be \emph{graded}.

Let $(M,\lambda,\varphi)$ be an abstract contact open book. A \emph{grading} on $(M,\lambda,\varphi)$ is a grading on $(M,d\lambda)$ and a compatible grading on $\varphi$. An an abstract contact open book together with a grading is said to be \emph{graded}. For more details on gradings, see \cite[App. A]{mclean2019}.

Let $M$ be a graded symplectic manifold of dimension $2n$ with grading $\widetilde{F}$, and let $\varphi: M \to M$ be a graded symplectomorphism with grading $\widetilde{\beta}: \widetilde{F} \to \varphi^\ast \widetilde{F}$. Let $p \in M$ be a fixed point of $\varphi$. The grading $\beta$ induces an $\widetilde{\mathrm{Sp}}(2n)$-equivariant isomorphism $\widetilde{\beta}_p: \widetilde{F}_p \to \widetilde{F}_p$. This corresponds to the action of an element in $\widetilde{\mathrm{Sp}}(2n)$, i.e. a homotopy class of paths in $\mathrm{Sp}(2n)$ starting at the identity matrix. The \emph{Conley-Zehnder} index of $p$ with respect to $\varphi$ is defined to be the Maslov index of this path, see \cite{robbin93}.

%A common feature of Floer theories in that they are invariant under isotopy.

\subsubsection{Floer homology}

Now we assign a Floer homology group to each graded abstract contact open book $(M,\lambda,\varphi)$. Fix the following data:
\begin{itemize}
    \item A $\varphi$-periodic time-dependent Hamiltonian $H_t$; i.e. a smooth function $H: M \times \R \to \R: (x,t) \mapsto H_t(x)$ such that $H_t \circ \varphi = H_{t+1}$.
    \item A $\varphi$-periodic time-dependent cylindrical almost complex structure $J_t$; i.e. a smooth family of cylindrical almost complex structures such that $\varphi^\ast J_t = J_{t+1}$. For the definition of cylindrical almost complex structure see \cite[Definition 4.1]{mclean2019} or \cite[(2.7),(2.8)]{uljarevic2014}.
\end{itemize}

Let $X^H$ be the unique time-dependent vector field such that $\omega(X^H,\cdot) = dH_t$, and we denote its flow by $\psi^H_t$. A \emph{Hamiltonian $\varphi$-twisted loop} is an integral curve $\eta: \R \to M$ of the vector field $X^H$ such that $\varphi(\eta(t+1)) = \eta(t)$ for every $t \in \R$. Observe that then $\eta(0)$ is a fixed point of $\varphi \circ \psi^H_1$. We define the \emph{Conley-Zehnder} index of $\eta$ to be the Conley-Zehnder index of the point $\eta(0)$ with respect to $\varphi \circ \psi^H_1$.

Let $\eta_-$ and $\eta_+$ be two Hamiltonian $\varphi$-twisted loops. A \emph{Floer trajectory} from $\eta_-$ to $\eta_+$ is a smooth map $u: \R \to M$ such that
\begin{itemize}
    \item $\varphi(u(s,t+1)) = u(s,t)$,
    \item $\lim_{s \to \pm \infty} = \eta_{\pm}(t)$, and
    \item $u$ satisfies the Floer Cauchy-Riemann equation
    \[
    \frac{\partial u}{\partial s}+J_t(u)\left(\frac{\partial u}{\partial t}-X^H(u)\right) = 0.
    \]
\end{itemize}

Suppose that $(H_t,J_t)$ is a regular pair (see \cite[p. 589]{dostoglou-salamon94}, \cite[Definition 2.5]{uljarevic2014}). Given two Hamiltonian twisted loops $\eta_-$ and $\eta_+$, we denote by $\mathcal{M}(\eta_-,\eta_+)$ the moduli space of Floer trajectories connecting $\eta_-$ and $\eta_+$. The additive group $\R$ acts smoothly and freely on $\mathcal{M}(\eta_-,\eta_+)$ by reparametrization, i.e. if $a \in \R$ and $u(s,t) \in \mathcal{M}(\eta_-,\eta_+)$, then $u(s+a,t) \in \mathcal{M}(\eta_-,\eta_+)$. If $\CZ(\eta_+)-\CZ(\eta_-) = 1$, then the quotient $\mathcal{M}(\eta_-,\eta_+)/\R$ is a compact 0-dimensional manifold, see \cite[Theorem 2.17]{uljarevic2014}. Moreover, in \cite[\S 5]{floer93} Floer and Hofer give a way of orienting such moduli spaces in the case $\varphi = \id$. This extends to the present situation.

The \emph{Floer complex} $\mathrm{CF}_\bullet(H_t,J_t)$ is the free abelian group generated by the Hamiltonian $\varphi$-twisted loops, with grading given by the opposite of the Conley-Zehnder index, i.e. $\deg(\eta) \coloneqq -\CZ(\eta)$. The differential $\partial: \mathrm{CF}_\bullet(H_t,J_t) \to \mathrm{CF}_{\bullet-1}(H_t,J_t)$ is defined by
\[
\partial (\eta_-) = \sum n(\eta_-,\eta_+)\eta_+,
\]
where $n(\eta_-,\eta_+)$ is the \emph{signed} count of points of the compact 0-dimensional manifold $\mathcal{M}(\eta_-,\eta_+)/\R$, according to the given orientation.
% \[
% n(\eta_-,\eta_+) \coloneqq\begin{cases}
%     \sharp \mathcal{M}(\eta_-,\eta_+)/\R \quad & \text{ if } \ \mathcal{M}(\eta_-,\eta_+) \text{ has positive orientation}, \\
%     - \ \sharp \mathcal{M}(\eta_-,\eta_+)/\R \quad & \text{ if } \ \mathcal{M}(\eta_-,\eta_+) \text{ has negative orientation}.
% \end{cases}
% \]
By \cite[p. 589]{dostoglou-salamon94}, cf. \cite[Theorem 2.18]{uljarevic2014}, this is indeed a complex, i.e. $\partial^2 = 0$.

\begin{definition}
    The \emph{Floer homology} of the graded abstract contact open book $(M,\lambda,\varphi)$ is the homology of the chain complex $(\mathrm{CF}_\bullet(H_t,J_t),\partial)$. We denote it by $\HF_\bullet(\varphi,+)$.
\end{definition}

Fixed-point Floer homology deals with the dynamics of an abstract contact open book. The following notion will be used several times.

\begin{definition}[{\cite[Definition 5.25]{bobadilla2022}}] \label{def-czf}
    Let $(M,\lambda,\varphi)$ be an abstract contact open book. A compact connected subset $B$ of $M$ is said to be a \emph{codimension zero family of fixed points} if there exists an open neighborhood $N_B$ of $B$, and a smooth function $H_B: N_B \to \R$, such that
    \begin{enumerate}[(i)]
        \item $B$ is a codimension zero submanifold of $M$.
        \item $\mathrm{Fix}(\varphi|_{N_B}) = B$.
        \item $H_B^{-1}(0) = B$.
        \item $\varphi|_{N_B}$ is the time-one flow of the vector field $X^{H_B}$.
    \end{enumerate}
\end{definition}

\begin{remark}
    This definition is the one given in \cite{bobadilla2022}. It is broader than the one used in \cite[\S 4]{mclean2019}, where the Hamiltonian $H_B$ is required to be nonnegative.
\end{remark}

Given a codimension zero family of fixed points $B$, observe that $B \cap \partial M$ (which can be empty) is contained in $\partial B$. Given $p \in \partial B \setminus (B \cap \partial M)$, there is an open neighborhood $U$ of $p$ contained in $N_B$ such that $H_B|_{U \setminus \partial B} > 0$, or $H_B|_{U \setminus \partial B} < 0$. We denote by $\partial^+B$ the set of points of $\partial B \setminus (B \cap \partial M)$ such that $H_B|_{U \setminus \partial B} > 0$, and by $\partial^-B$ the set of points such that $H_B|_{U \setminus \partial B} < 0$. We will call $\partial^+B$ and $\partial^-B$ the \emph{positive} and \emph{negative} parts of the boundary, respectively.

Note that an action $\mathcal{a}$ of $\varphi$ is constant on $B$, and we will denote its value by $\mathcal{a}(B)$. Similarly, since two homotopic paths in $\mathrm{Sp}(2n)$ relative to their endpoints have the same Maslov index, the Conley-Zehnder index relative to $\varphi$ is constant on $B$. We will denote its value by $\CZ(B)$.

\subsection{The A'Campo representative of the monodromy}
We begin by recalling A'Campo's description of the monodromy in terms of an embedded resolution of singularities \cite{acampo1975} in the case of a plane curve, see also \cite[Ch. 9]{wall2004}. Fix a log resolution $\mu: (Y,\mathbf{E}) \to (\C^2,0)$ of $f$, which we will later on assume to be $m$-separating, see Section \ref{curves}. Recall that $\mathcal{E}$ is the set of irreducible components of $\mathbf{E}$, and $\mathcal{S}$ is the set of irreducible components of the strict transform $\widetilde{C} = \mu_*^{-1}(C)$. The Milnor fiber $\F$ admits a decomposition
\begin{equation} \label{fiber-decomposition}
\F = \bigcup_{E \in \mathcal{E} \cup \mathcal{S}} \F_E \cup \bigcup_{E,F \in \mathcal{E} \cup \mathcal{S}} \F_{E,F},
\end{equation}
where the pieces of the decomposition are described as follows.

For each $E \in \mathcal{E} \cup \mathcal{S}$ there is a \emph{branched} covering $\widetilde{E} \to E$ of degree $N_E$ whose branch locus are the intersection points of $E$ with other divisors in $\mathcal{E} \cup \mathcal{S}$, i.e. the covering branches at
%contained the points of intersection of $E$ with other divisors in $\mathcal{E} \cup \mathcal{S}$, i.e.
$E \setminus E^\circ$ where $E^\circ \coloneqq E \setminus \cup_{E \neq F \in \mathcal{E} \cup \mathcal{S}} F$. The set $\F_E \subset \F$ is the real oriented blow-up of $\widetilde{E}$ at the preimages of the points in $E \setminus E^\circ$, i.e. $\F_E$ is the result of replacing each point of $\widetilde{E}$ lying over $E \setminus E^\circ$ by a copy of $\SS^1$.

% {\color{blue} 
Explicitly, the covering $\widetilde{E} \to E$ is constructed as follows, see \cite[Lemma 3.2.3]{denef1998}. For every $p \in E^\circ$, there exists an open neighborhood $V$ of $p$ with coordinates $(z_E,x)$ such that $(f \circ \mu)|_V = uz_E^{N_E}$, where $u$ is an invertible function on $V$, and $E \cap V = \{z_E = 0\}$. We consider
\[
\widetilde{V} \coloneqq \{(x,w) \in (E \cap V) \times \C \ | \ w^{N_E} = u(0,x)\},
\]
which comes with the natural projection $\widetilde{V} \to E \cap V$. This is a covering map of degree $N_E$.

On the other hand, if $p \in E \cap F$ for another divisor $F$, there exists an open neighborhood $V$ of $p$ with coordinates $(z_E,z_F)$ such that $f|_V = uz_E^{N_E}z_F^{N_F}$, where $u$ is an invertible function on $V$, $E \cap V = \{z_E = 0\}$, and $F \cap V = \{z_F = 0\}$. We consider $\widetilde{V}$ to be the normalization of
\[
\{(z_F,w) \in (E \cap V) \times \C \ | \ w^{N_E} = u(0,z_F)z_F^{N_F}\}.
\]
Composing the normalization and the natural projection, we get a map $\widetilde{V} \to E \cap V$. This is a branched covering of degree $N_E$; its only branch value is $0 \in V$ and it has index $\gcd(N_E, N_F)$.

Gluing the two types of coverings over an open cover of $E$ yields the desired branch covering $\widetilde{E} \to E$.
% The divisor $E$ can be covered by open affine subschemes $V$ of these types. Gluing them gives rise to the map $\widetilde{E} \to E$.
% }

Note that for any $E, F \in \mathcal{E} \cup \mathcal{S}$ such that $E \cap F \neq \varnothing$, the spaces $\F_E$ and $\F_F$ have (possibly many) boundary components above the intersection point, and each of these components is isomorphic to $\SS^1$. The set $\F_{E,F}$ is a disjoint union of cylinders (i.e. copies of $[0,1] \times \SS^1$), each connecting one of the boundary components of $\F_E$ with one of the boundary components of $\F_F$.

For a proof of the decomposition \eqref{fiber-decomposition} see \cite{acampo1975} or \cite{bobadilla2022}. Figure \ref{fig:milnor-fiber} shows a schematic picture of the decomposition for the Milnor fiber of an irreducible plane curve. The following result gives the topology of each piece of the decomposition, using the description of the dual graph given in Definition \ref{parts-resolution-graph}.

\begin{proposition} \label{acampo-pieces}
    The topology of each $\F_E$ can be described as follows:
    \begin{enumerate}[(i)]
        \item For every $j = 1,\ldots,b$, the space $\F_{\widetilde{C}^{[j]}}$ is a cylinder.

        \item If $E \in \mathcal{E}$ has valency 1, then it lies in some end $\mathcal{F}_R$ (resp. $\mathcal{F}_{R,i}$ in the exceptional case, see Definition \ref{parts-resolution-graph}), and $\F_E$ is a disjoint union of $c(\mathcal{F}_R)$ (resp. $c(\mathcal{F}_{R,i})$) disks.

        \item If $E$ has valency 2, then it lies in either a trunk $\mathcal{T}_{R,S}$ or an end $\mathcal{F}_R$ (or $\mathcal{F}_{R,i}$ in the exceptional case), and $\F_E$ is a disjoint union of $c(\mathcal{T}_{R,S})$ or $c(\mathcal{F}_R)$ (or $c(\mathcal{F}_{R,i})$) cylinders, respectively.

        \item If $R$ has valency $\geq 3$, (that is, it is a rupture component $R \in \mathcal{R}$), then $\F_R$ is a disjoint union of $c(R)$ compact orientable surfaces, each of them with
        \[
        \frac{1}{c(R)} \sum_{\substack{E \in (\mathcal{E} \cup \mathcal{S})\setminus \{R\} \\ R \cap E \neq \varnothing}} \gcd(N_R,N_E)
        \]
        boundary components and genus
        \[
        1+\frac{1}{2c(R)} \Bigg( N_R(\mathrm{val}(R)-2)-\sum_{\substack{E \in (\mathcal{E} \cup \mathcal{S})\setminus \{R\} \\ R \cap E \neq \varnothing}} \gcd(N_R,N_E) \Bigg).
        \]

        %\item If $E$ has valency $\geq 3$, then it is a rupture component $E = R \in \mathcal{R}$, and $\F_R$ is a disjoint union of $c(R)$ compact orientable surfaces, each of them with Euler characteristic $\tfrac{N_R}{c(R)}(2 - \mathrm{val}(R))$ and $\tfrac{1}{c(R)}(c(\mathcal{F}_R) + \sum_{S \in \mathcal{N}_R} c(\mathcal{T}_{R,S}))$ boundary components (if $R$ has no end attached to it, then $c(\mathcal{F}_R) = 0$; and in the exceptional case where $R$ has two ends attached to it, the term $c(\mathcal{F}_R)$ is replaced by $c(\mathcal{F}_{R,1}) + c(\mathcal{F}_{R,2})$ in the formula for the boundary components).
    \end{enumerate}
\end{proposition}

\begin{proof}
    Note that each strict transform $\widetilde{C}^{[j]}$ is homeomorphic to a closed disk, as it is the intersection of a smooth curve with a closed ball. Therefore $\F_{\widetilde{C}^{[j]}}$ is the real oriented blow-up of a covering of $(\widetilde{C}^{[j]})^\circ$ of degree $N_{\widetilde{C}^{[j]}} = 1$, and hence it is topologically a cylinder.

    Let $E \in \mathcal{E}$, we are going to see that $\widetilde{E}^\circ$ has $c(E)$ connected components. Since $\widetilde{E}^\circ \to E^\circ$ is a covering of degree $N_E$, we may identify its fibers with the group $\Z/(N_E)$. The fundamental group of $E^\circ \cong \mathbb{P}^1 \setminus \{\mathrm{val}(E) \text{ points}\}$ is generated by a loop around each of the missing points, subject to the relation that the composition of all those generators is trivial. Let us study the action of a loop around one of the missing points on the fiber.

    Let $(z_E, z_F)$ be local coordinates on a neighborhood $V$ of the intersection point of $E$ and $F$ in which $f(z_E,z_F) = z_E^{N_E} z_F^{N_F}$. By definition, the restriction of the covering to $E^\circ \cap V$ is given in coordinates by the projection
    \[
    \{(z_F, w) \in (E^\circ \cap V) \times \C \mid w^{N_E} = u(0, z_F) z_F^{N_F}\} \to E^\circ \cap V: (z_F, w) \mapsto z_F.
    \]
    Hence if we fix $\gamma: [0,1] \to E^\circ \cap V: t \mapsto \delta e^{2 \pi \imath t}$ a small loop around the origin and a preimage $(w, z_F)$ of $\gamma(0)$ as the start $\widetilde{\gamma}(0)$ of a lift, we have $\widetilde{\gamma}(1) = (w e^{2\pi \imath N_F/N_E}, z_F)$. In other words, under the identification of the fiber with $\Z/(N_E)$, the action of the loop $\gamma$ is a adding $N_F$. Therefore the orbits of the action of $\pi_1(E^\circ)$ on $\Z/(N_E)$ are in bijection with 
    \[
    \Z/(\langle N_E \rangle + \langle \{ N_F \mid F \in \mathcal{E} \setminus \{E\}, E \cap F \neq \varnothing \} \rangle ) = \Z / \langle c(E) \rangle,
    \]
    which shows that $\widetilde{E}^\circ$ has $c(E)$ connected components, as we wanted.

    From here, the result follows by recalling that $c(E) = c(\mathcal{T}_{R,S})$ (resp. $c(\mathcal{F}_R)$, $c(\mathcal{F}_{R,i})$) if $E \in \mathcal{T}_{R,S}$ (resp. $c(\mathcal{F}_R)$, $c(\mathcal{F}_{R,i})$), and using the Riemann-Hurwitz formula to compute the genus of each component of $\widetilde{E}^\circ$.

\end{proof}

\begin{figure}[ht] 
    \centering
    \resizebox{0.9\linewidth}{!}{%
    \fontsize{20pt}{20pt}\selectfont
    \def\svgheight{0.5cm}
    \import{./figures/}{milnor-fiber.pdf_tex}
}
    \caption{Schematic picture of the partition of the Milnor fiber of an irreducible curve according to \eqref{fiber-decomposition}.}
    \label{fig:milnor-fiber}
\end{figure}

\begin{remark} \label{NR-divides-sum-of-NE}
    From the above, one can obtain a topological proof of Proposition \ref{cE-independence}. Indeed, consider the action of $\pi_1(E^\circ)$ on the fiber $\Z/(N_E)$ of the covering $\widetilde{E}^\circ \to E^\circ$. Since the composition of a loop around each of the missing points of $E^\circ$ is homotopic to a constant path, we conclude that the action of this loop, which is adding $N_F$ for every divisor $F$ that intersects $E$, is trivial. This means that $\sum_{F \in \mathcal{E} \cup \mathcal{S} \setminus \{E\},\, E \cap F \neq \varnothing} N_F$ is a multiple of $N_E$.
\end{remark}

A'Campo's construction comes with a canonical representative of the monodromy $\varphi: \F \to \F$. Its iterates $\varphi^m$ acts on each $\F_E$ as the covering transformations. That is, locally around each intersection $E \cap F$, the maps $\varphi^m,\ m \geq 1$, cyclically permute the boundary components of $\F_E$ that intersect $\F_{E,F}$. When $\gcd(N_E,N_F)$ divides $m$, each boundary component is mapped to itself rotated by a certain angle. The $m$-th iterate of the monodromy action induces a cyclic permutation on the set of connecting tubes $\F_{E,F}$, and when $\gcd(N_E, N_F)$ divides $m$ each tube is mapped to itself with a twist that interpolates between the rotations of the boundary components. 

\begin{remark}
The twist interpolating between the rotations may be described as follows. For each connected component $K$ of $\F_{E,F}$ there exists a diffeomorphism $(v,\theta): K \to [0,1] \times \SS^1$ such that $v^{-1}(0) = K \cap \F_E,\ v^{-1}(1) = K \cap \F_F$ and such that, if $\gcd(N_E, N_F)$ divides $m$, then the map $\varphi^m$ in these coordinates is
\begin{equation} \label{monodromy-tube-formula}
(v,\theta) \mapsto \left(v, \theta + \dfrac{ma_F}{N_E} (1 - v) - \dfrac{ma_E}{N_F} v \right),
\end{equation}
where $a_E, a_F$ are the coefficients in the Bézout identity $\gcd(N_E,N_F) = a_E N_E + a_F N_F$. 

This follows from the original work by A'Campo \cite[p. 240]{acampo1975}. Nevertheless, we provide a proof using the coordinate description of the A'Campo space given by Bobadilla and Pe{\l}ka, see \cite[Ex. 4.16, Figure 4(b)]{bobadilla2022}.

\begin{proof}
Recall that on $\F_{E,F}$ we have angular coordinates $\theta_E, \theta_F \in \R/\Z$ satisfying $N_E \theta_E + N_F \theta_F = 0$, and weighted barycentric coordinates $\ell_E, \ell_F \in \R$ such that $N_E \ell_E + N_F \ell_F = 1$, see \cite[(119)]{bobadilla2022}. In these coordinates, the map $\varphi^m$ is given by $\theta_E \mapsto \theta_E + m \ell_E, \theta_F \mapsto \theta_F + m \ell_F$. Let $v \coloneqq N_F \ell_F$, so $1 - v = N_E \ell_E$, and let $\theta \in \R/\Z$ be a parameter for the connected component of the set $\{N_E \theta_E + N_F \theta_F = 0\} \subset (\R/\Z)^2$ containing the point $(0,0)$. Then we have
\[
\theta_E = \dfrac{N_F}{\gcd} \theta, \quad \theta_F = \dfrac{-N_E}{\gcd} \theta,
\]
where we are denoting $\gcd = \gcd(N_E, N_F)$ for brevity. Let $\theta'$ be the angular coordinate of $\varphi^m(v, \theta)$, then
\[
\dfrac{N_F}{\gcd} \theta + m \ell_E = \dfrac{N_F}{\gcd} \theta', \quad \dfrac{N_E}{\gcd} \theta + m \ell_F = \dfrac{N_E}{\gcd} \theta'
\]
and therefore
\begin{align*}
    0 &= \dfrac{a_F N_F}{\gcd} (\theta - \theta') + a_F m \ell_E = \left( 1 - \dfrac{a_E N_E}{\gcd} \right) (\theta - \theta') + a_F m \ell_E \\
    &= \theta - \theta' - a_E m \ell_F + a_F m \ell_E
\end{align*}
\end{proof}
\end{remark}

In \cite{bobadilla2022}, Fernández de Bobadilla and Pe{\l}ka enriched A'Campo's construction to the symplectic category. More specifically, they constructed a smooth manifold with boundary that they called the \emph{A'Campo space} $A$, and a smooth map $f_A: A \to \C_\log$ such that the restriction $f_A|_{\partial A}: \partial A \to \partial \C_\log \cong \SS^1$ is a locally trivial fibration isomorphic to the Milnor fibration in the tube. Here $\C_\log \cong \R_{\geq 0} \times \SS^1$ is the real oriented blow-up of $\C$ at the origin. The space $A$ is endowed with a smooth $1$-form $\lambda_A$ that makes $f_A|_{f_A^{-1}(\partial \C_\log)}$ a Liouville fibration, see \cite[\S 5.2]{bobadilla2022}. Moreover, A'Campo's monodromy is an exact symplectomorphism with respect to this new structure. In \cite[\S 5.5.3]{bobadilla2022} it is explained how to grade this symplectomorphism following \cite[App. A]{mclean2019}, so altogether, one obtains a graded abstract contact open book $(\F,\lambda,\varphi)$. It turns out that it is graded isotopic to the graded abstract contact open book associated to the Milnor fibration in the tube, see \cite[\S 7.2]{bobadilla2022}.

The dynamical properties of $(\F, \lambda, \varphi)$ are the same as the dynamical properties of A'Campo's classical description of the monodromy which we have just recalled in the case of a plane curve. More specifically, there is a smooth map $\pi_A: A \to Y$ and the pieces in decomposition \eqref{fiber-decomposition} are realized as
\[
\F_E = \overline{\pi_A^{-1}(E^\circ)} \cap f_A^{-1}(0,0), \quad \F_{E,F} = \pi_A^{-1}(E \cap F) \cap f_A^{-1}(0,0).
\]

\begin{proposition}[{\cite[Proposition 7.4]{bobadilla2022}}] \label{undeformed-dynamics}
    Let $m \geq 1$ be an integer and let $\mu: (Y,\mathbf{E}) \to (\C^2,0)$ be an $m$-separating log resolution of $f$. Then, the fixed points of $\varphi^m$ decompose as
    \[
    \mathrm{Fix} (\varphi^m) = \bigsqcup_{\substack{E \in \mathcal{E} \cup \mathcal{S} \\ N_E \mid m}} \F_E.
    \]
    Every connected component $B$ of $\F_E$ is a codimension zero family of fixed points (see Definition \ref{def-czf}), with $\partial^- B = \varnothing$ and $\CZ(B) = 2m(\tfrac{\nu_E}{N_E} - 1)$.
\end{proposition}

\subsection{Deforming the monodromy} \label{deform-monodromy}
After endowing the A'Campo monodromy with a symplectic structure, Fernández de Bobadilla and Pe{\l}ka use the McLean spectral sequence to deduce some properties about its Floer homology. This is good enough for their purposes, but not for the explicit computation that we want to carry out. Indeed, it will follow from Theorem \ref{main-result} that the McLean spectral sequence associated to the graded abstract contact open book $(\F, \lambda, \varphi^m)$ does not degenerate at the first page in general. Our strategy is to deform $\varphi^m$ to obtain an isotopic abstract contact open book ---which will therefore have the same Floer homology--- whose associated McLean spectral sequence does degenerate at the first page.

Recall from Definition \ref{parts-resolution-graph} that, given a rupture divisor $R \in \mathcal{R}$, if there are any $m$-divisors in $\mathcal{F}_R$ then there is one which is closest to $R$, and we denote it by $E_\dom(m,R)$. Let $B_{m,R}$ be the piece of the Milnor fiber corresponding to divisors in $\mathcal{F}_R$ which are ``at least as far away'' from $R$ as $E_\dom(m,R)$, that is
\[
B_{m,R} \coloneqq \bigcup_{E \in \mathcal{F}_{m,R}} \F_E \cup \bigcup_{E,F \in \mathcal{F}_{m,R}} \F_{E,F}.
\]

We may describe the topology of $B_{m,R}$ in terms of Proposition \ref{acampo-pieces}. Indeed, if there are no $m$-divisors in $\mathcal{F}_R$, then $B_{m,R} = \varnothing$. If there are $m$-divisors in $\mathcal{F}_R$ but $R$ itself is not an $m$-divisor, then $E_\dom(m,R)$ has valency 2 and each connected component of $B_{m,R}$ is a finite chain of cylinders with a disk attached at the end, corresponding to the leaf in the end. In other words, $B_{m,R}$ is a disjoint union of $c(\mathcal{F}_R)$ closed disks (or $c(\mathcal{F}_{R,i})$ disks in the exceptional case where $R$ has two ends attached to it, where $\mathcal{F}_{R,i}$ is the end that contains $E_\dom(m,R)$). Finally, if $R$ is an $m$-divisor then $B_{m,R}$ is the result of capping $c(\mathcal{F}_R)$ boundary components of $\F_R$ with the disks coming from the divisors in the end (or 0 components if $R$ has no ends attached, or $c(\mathcal{F}_{R,1}) + c(\mathcal{F}_{R,2})$ components in the exceptional case). From there one easily computes the number of boundary components and the genus of each connected component from the analogous information about $\F_R$. This topological data is explicitly given below in Proposition \ref{deformed-dynamics}.

\label{deforming_discussion} The restriction of $\varphi^m$ to $B_{m,R}$ is isotopic to the identity. Indeed, if $B_{m,R} \neq \varnothing$ then the set $\F_{E_\dom(m,R)}$ is a neighborhood of the boundary of $B_{m,R}$ such that $\varphi^m|_{\F_{E_\dom(m,R)}} = \id$ and $B_{m,R} \setminus \F_{E_\dom(m,R)}$ is homeomorphic to a disjoint union of disks. Since the restriction of $\varphi^m$ to each of the connected components of $B_{m,R} \setminus \F_{E_\dom(m,R)}$ is a diffeomorphism of the 2-disk that fixes the boundary, it is smoothly isotopic to the identity by a theorem of Smale, see \cite{smale1959}. In fact Lemma \ref{exact-moser} below shows that the smooth isotopy between the restriction of $\varphi^m$ and the identity may be chosen to be an exact symplectomorphism for all $t$. Doing this for every $R$ we obtain a smooth isotopy $\psi_t: \F \to \F$ by compactly supported exact symplectomorphisms such that $\psi_0 = \varphi^m$, and ${\psi_1}_{|B_{m,R}}=\id$ for every $R \in \mathcal{R}$ such that $B_{m,R} \neq \varnothing$. We set $\widetilde{\varphi^m} \coloneqq \psi_1$.

The diffeomorphism $\varphi^m$ is an exact monodromy of the fibration obtained by pulling back $f_A: \partial A \to \mathbb{S}^1$ through the covering of degree $m$ of $\mathbb{S}^1$. In turn, $\widetilde{\varphi^m}$ is an exact monodromy for the same fibration because it is isotopic to $\varphi^m$. By \cite[Corollary 5.10]{bobadilla2022}, $\widetilde{\varphi^m}$ inherits a grading from the grading on the vertical symplectic frame bundle of $f_A$, and the abstract contact open books $(\F,\lambda,\varphi^m)$ and $(\F,\lambda,\widetilde{\varphi^m})$ are graded isotopic. Hence \cite[Proposition 6.2]{bobadilla2022} guarantees that the Floer homology of $(\F,\lambda,\widetilde{\varphi^m})$ remains unchanged, i.e.
\[
    \HF_\bullet(\varphi^m,+) \cong \HF_\bullet(\widetilde{\varphi^m},+).
\]
The symplectomorphism $\widetilde{\varphi^m}$ still has good dynamical properties as in Proposition \ref{undeformed-dynamics}:

%The family $\{(\F, \lambdastd, \psi_t)\}_{t \in [0,1]}$ is an isotopy of abstract contact open books, see \cite[Definition 5.2]{bobadilla2022} (it follows from the proof of Lemma \ref{exact-moser} that $\psi_t$ depends smoothly on $t$). Moreover, the isotopy can be used to transport the grading of $(\F,\lambdastd,\varphi^m)$ to every $(\F, \lambdastd, \psi_t)$, so that the isotopy becomes an isotopy of graded abstract contact open books.

\begin{proposition} \label{deformed-dynamics}
    The fixed points of $\widetilde{\varphi^m}$ decompose as
    \[
    \mathrm{Fix}(\widetilde{\varphi^m}) = \left( \bigsqcup_{\substack{R \in \mathcal{R}}} B_{m,R} \right) \sqcup \left( \bigsqcup_{R,S \in \mathcal{R} \cup \mathcal{S}} \bigsqcup_{\substack{E \in \mathcal{T}_{R,S} \\ N_E \mid m}} \F_E  \right) \sqcup \left( \bigsqcup_{j=1}^r \F_{\widetilde{C}^{[j]}}\right).
    \]
    The connected components of each $B_{m,R}$, $\F_E$ and $\F_{\widetilde{C}^{[j]}}$ are codimension zero families of fixed points (see Definition \ref{def-czf}), with Conley-Zehnder indexes 
        \begin{align*}
            \CZ(B_{m,R}) &= 2m\left(\frac{\nu_{E_\dom(m,R)}}{N_{E_\dom(m,R)}} - 1\right) \text{ for all } R \in \mathcal{R}, \\ 
            \CZ(\F_E) &= 2m\left(\frac{\nu_E}{N_E} - 1\right) \text{ for all } E \in \mathcal{E} \cup \mathcal{S}, \\
            \CZ(\F_{\widetilde{C}^{[j]}}) & = 0.
        \end{align*}
    The topology of these sets is the following:
        \begin{enumerate}[(i)]
            \item Each $\F_{\widetilde{C}^{[j]}}$ is a cylinder, i.e. a copy of $\SS^1 \times [0,1]$.

            \item \label{cylinders-dynamics} For each $E \in \mathcal{T}_{R,S}$ such that $N_E$ divides $m$, the set $\F_E$ is a disjoint union of $c(\mathcal{T}_{R,S})$ cylinders.

            \item \label{BmR-dynamics} For each $R \in \mathcal{R}$, if there are no $m$-divisors in $\mathcal{F}_R$ then $B_{m,R}$ is empty. If there are $m$-divisors in $\mathcal{F}_R$ but $N_R$ does not divide $m$, then $B_{m,R}$ is a disjoint union of $c(\mathcal{F}_R)$ closed disks (or $c(\mathcal{F}_{R,i})$ disks in the exceptional case where $R$ has two ends attached to it, where $\mathcal{F}_{R,i}$ is the end that contains $E_\dom(m,R)$). Finally, if $R$ is an $m$-divisor, then $B_{m,R}$ is the disjoint union of $c(R)$ compact orientable surfaces, each with $\tfrac{1}{c(R)}\sum_{S \in \mathcal{N}_R} c(\mathcal{T}_{R,S})$ boundary components and genus
            \[
            1+\frac{1}{2c(R)}\left(N_R(\mathrm{val}(R)-2)-\sum_{\substack{E \in (\mathcal{E} \cup \mathcal{S})\setminus \{R\} \\ R \cap E \neq \varnothing}} \gcd(N_R,N_E)\right).
            \]
            %\item \label{BmR-dynamics} For each $R \in \mathcal{R}$, if there are no $m$-divisors in $\mathcal{F}_R$ then $B_{m,R}$ is empty. If there are $m$-divisors in $\mathcal{F}_R$ but $N_R$ does not divide $m$, then $B_{m,R}$ is a disjoint union of $c(\mathcal{F}_R)$ closed disks (or $c(\mathcal{F}_{R,i})$ disks in the exceptional case where $R$ has two ends attached to it, where $\mathcal{F}_{R,i}$ is the end that contains $E_\dom(m,R)$). Finally, if $R$ is an $m$-divisor, then $B_{m,R}$ is the disjoint union of $c(R)$ compact orientable surfaces, each with $\tfrac{1}{c(R)}\sum_{S \in \mathcal{N}_R} c(\mathcal{T}_{R,S})$ boundary components and Euler characteristic $\chi$, where
            %\[
            %\chi = \begin{cases}
            %    \tfrac{1}{c(R)} N_R(2 - \mathrm{val}(R)) & \text{if $R$ has no end attached}, \\ 
                %\tfrac{1}{c(R)} (N_R(2 - \mathrm{val}(R)) + c(\mathcal{F}_R)) & \text{if $R$ has one end attached}, \\ 
                %\tfrac{1}{c(R)} (N_R(2 - \mathrm{val}(R)) + c(\mathcal{F}_{R,1}) + c(\mathcal{F}_{R,2})) & \text{if $R$ has two ends attached}.
            %\end{cases}
            %\]
        \end{enumerate}
\end{proposition}

\begin{proof}
    The decomposition of the fixed points follows from the construction. Items (i) and (ii) follow from \ref{undeformed-dynamics}, since the Conley-Zehnder index is locally constant in codimension zero families of fixed points and there are points in $\F_{E_\dom(m,R)} \subset B_{m,R}$ which have not been altered by the deformation. Item (iii) just collects the description of the $B_{m,R}$ that we made in the construction above.
\end{proof}

\begin{remark}
    The reader may have noticed the similarity between Theorem \ref{rupture-topology} and Proposition \ref{deformed-dynamics}, including the resemblance between $\mathfrak{Z}_{m,R}$ and $B_{m,R}$. In fact, this observation is at the core of the proof of Theorem \ref{main-result} in \S 5.
\end{remark}

We used the following lemma in the definition of $\widetilde{\varphi^m}$.

\begin{lemma} \label{exact-moser}
    Let $(M, \lambda)$ be a $2$-dimensional Liouville domain, and let $\varphi_0, \varphi_1: M \to M$ be compactly supported exact symplectomorphisms. If $\varphi_0,\varphi_1$ are smoothly isotopic by compactly supported orientation-preserving diffeomorphisms, then they are smoothly isotopic by compactly supported exact symplectomorphisms.
    %then there exists a smooth isotopy $\varphi: M \times [0,1] \to M$ from $\varphi_0$ to $\varphi_1$ such that $\varphi_t \coloneqq \varphi|_{M \times \{t\}}: M \to M$ is an exact symplectomorphism for every $t \in [0,1]$.
\end{lemma}
\begin{proof}
    Let $\varphi: M \times [0,1] \to M$ be a smooth isotopy from $\varphi_0$ to $\varphi_1$ such that $\varphi_s \coloneqq \varphi|_{M \times \{s\}}: M \to M$ is an orientation-preserving compactly supported diffeomorphism for every $s \in [0,1]$. By assumption, $\varphi_0$ and $\varphi_1$ are also exact symplectomorphisms.

    We wish to apply a parametrized version of Moser's trick, see \cite[\S 3.2]{mcduff-salamon2017}. Consider the smooth family of 1-forms $\lambda_{s,t} = (1-t)\lambda+t\varphi_s^\ast\lambda$, $s,t \in [0,1]$. The 2-form $d\lambda_{s,t}$ is a convex combination of $d\lambda$ and $\varphi_s^* d\lambda$, which define the same orientation at every point because $\varphi_s$ is orientation-preserving for every $s \in [0,1]$. Therefore $d\lambda_{s,t}$ is nonvanishing and closed. Since $M$ is 2-dimensional, this implies that $d\lambda_{s,t}$ is a symplectic form for every $s,t \in [0,1]$.

    Let $X_{s,t}$ be the unique vector field on $M$ such that
    \begin{equation} \label{equation-vector-field}
        \frac{d}{dt}\lambda_{s,t}+X_{s,t} \ \lrcorner \ d\lambda_{s,t} = 0.
    \end{equation}
    Note that $X_{s,t}$ depends smoothly on the parameters $s, t \in [0,1]$. We define a vector field $\overline{X}_t$ on $M \times [0,1]$ by setting $\overline{X}_t(p,s) \coloneqq (X_{s,t}(p),0)$. Clearly, $\overline{X}_t$ depends smoothly on $t \in [0,1]$, and we interpret it as a time-dependent vector field. Since $\varphi_s$ is compactly supported for every $s$, we have $\varphi_s|_{\partial M} = \id$ and therefore $\lambda_{s,t}$ is independent of $t$ on $\partial M$. Thus $\frac{d}{dt}\lambda_{s,t} = 0$ on $\partial M$ for every $s,t \in [0,1]$; and $X_{s,t} = 0$ on $\partial M$; and hence $\overline{X}_s = 0$ on $\partial M \times [0,1]$ too. Therefore the flow of the time-dependent vector field $\overline{X}_t$ exists, i.e. there is a one-parameter family of maps $\overline{\psi}_t:  M \times [0,1] \to M \times [0,1]$ satisfying
    \begin{align*}
        \frac{d}{dt}\overline{\psi}_t(p,s) & = \overline{X}_t(\overline{\psi}_t(p,s)), \ \ \text{and} \\
        \overline{\psi}_0(p,s) & = (p,s)
    \end{align*}
    for every $(p,s) \in M \times [0,1]$. Define $\psi_{s,t}: M \to M$ by the formula $\psi_{s,t}(p) \coloneqq \mathrm{pr}_M \circ \overline{\psi}_t(p,s)$, where $\mathrm{pr}_M: M \times [0,1] \to M$ is the natural projection on $M$. Therefore, $\psi_{s,t}$ satisfies
    \begin{align*}
        \frac{d}{dt}\psi_{s,t}(p,s) & = X_{s,t}(\psi_{s,t}(p,s)), \ \ \text{and} \\
        \psi_{s,0}(p) & = p
    \end{align*}
    for every $p \in M$, and $s,t \in [0,1]$, i.e. for fixed $s$, the flow of $X_{s,t}$ is $\psi_{s,t}$. Also, for fixed $s,t$, the vector field $X_{s,t}$ is zero on a neighborhood of $\partial M$, so the the map $\psi_{s,t}$ is compactly supported.
    
    Using Cartan's magic formula, we get
    \begin{align*}
        \frac{d}{dt}(\psi_{s,t}^\ast\lambda_{s,t}-\lambda) & = \psi_{s,t}^\ast\left(\frac{d}{dt}\lambda_{s,t}+\mathcal{L}_{X_{s,t}}\lambda_{s,t}\right) = \\
        & = \psi_{s,t}^\ast\left(\frac{d}{dt}\lambda_{s,t}+X_{s,t} \ \lrcorner \ d\lambda_{s,t}+d(X_{s,t} \ \lrcorner \ \lambda_{s,t})\right) \overset{(\ref{equation-vector-field})}{=} \\
        & =\psi_{s,t}^\ast\left(d(X_{s,t} \ \lrcorner \ \lambda_{s,t})\right) = d\psi_{s,t}^\ast\left(X_{s,t} \ \lrcorner \ \lambda_{s,t}\right),
    \end{align*}
    so $\frac{d}{dt}(\psi_{s,t}^\ast\lambda_{s,t}-\lambda)$ is exact for every $s,t \in [0,1]$. Moreover, $\psi_{s,0}^\ast\lambda_{s,0}-\lambda = \lambda_{s,0}-\lambda = \lambda-\lambda = 0$ for every $t \in [0,1]$. In other words, for every fixed $s \in [0,1]$ we have a path of 1-forms $\psi_{s,t}^\ast \lambda_{s,t}-\lambda$ (parametrized by $t$) that starts at an exact 1-form and whose derivative is exact. We conclude that
    \begin{equation} \label{exact}
        \parbox{11cm}{\centering {$\psi_{s,t}^\ast \lambda_{s,t}-\lambda$ is exact for every $s,t \in [0,1]$.}}
    \end{equation}

    Let $\varphi'_t \coloneqq \varphi_0 \circ \psi_{0,t}$. Then,
    \begin{align*}
        (\varphi'_t)^\ast\lambda-\lambda & = (\varphi_0 \circ \psi_{0,t})^\ast \lambda-\lambda = \psi_{0,t}^\ast\varphi_0^\ast \lambda-\lambda = \psi_{0,t}^\ast\lambda_{1,0}-\lambda = \\
        & = (\psi_{0,t}^\ast\lambda_{1,0}-\psi_{0,t}^\ast\lambda_{0,t})+(\psi_{0,t}^\ast\lambda_{0,t}-\lambda) = \\
        & = \psi_{0,t}^\ast(\lambda_{1,0}-\lambda_{0,t})+(\psi_{0,t}^\ast\lambda_{0,t}-\lambda).
    \end{align*}
    By (\ref{exact}), we know that the 1-form $\psi_{s,0}^\ast\lambda_{s,0}-\lambda$ is exact. In turn,
    \[
    \lambda_{1,0}-\lambda_{0,t} = \varphi_0^\ast\lambda-((1-t)\lambda+t\varphi_0^\ast\lambda) = (1-t)(\varphi_0^\ast\lambda-\lambda),
    \]
    which is exact because $\varphi_0$ is an exact symplectomorphism. Therefore, the maps $\{\varphi'_t\}_{t \in [0,1]}$ define a smooth isotopy by compactly supported exact symplectomorphisms from $\varphi_0$ to $\varphi_0 \circ \psi_{0,1}$. Analogously, the map $\varphi''_t \coloneqq \varphi_1 \circ \psi_{1,t}$ is a smooth isotopy by compactly supported exact symplectomorphisms from $\varphi_1$ to $\varphi_1 \circ \psi_{1,1}$.

    Finally the map $\varphi'''_s \coloneqq \varphi_s \circ \psi_{s,1}$ is an exact symplectomorphism for every $s \in [0,1]$. Indeed,
    \[
    ({\varphi_s'''})^\ast\lambda-\lambda = (\varphi_s \circ \psi_{s,1})^\ast\lambda -\lambda = \psi_{s,1}^\ast\varphi_s^\ast\lambda-\lambda = \psi_{s,1}^\ast\lambda_{s,1}-\lambda
    \]
    which is exact by (\ref{exact}). Thus, the maps $\{\varphi'''_s\}_{s \in [0,1]}$ define a smooth isotopy by compactly supported exact symplectomorphisms from $\varphi_0 \circ \psi_{0,1}$ to $\varphi_1 \circ \psi_{1,1}$.
    
    Concatenating the isotopies $\{\varphi'_t\}$, $\{\varphi''_t\}$ and $\{\varphi'''_s\}$, the claim is proved.
\end{proof}

\subsection{Degeneration of the McLean spectral sequence}
McLean \cite[App. C]{mclean2019} constructed a spectral sequence that converges to the Floer cohomology of a graded abstract contact open book with good dynamical properties. In this paper, we use a slightly more general version of it that appeared in \cite{bobadilla2022} and converges to Floer homology.

\begin{proposition}[{\cite[Proposition 6.3]{bobadilla2022}}] \label{spectral_seq}
Let $(M,\lambda,\varphi)$ be a graded abstract contact open book, $\mathcal{a}: M \to \R$ an action of $\varphi$ and $\dim M = 2n$. Assume that
\[
\Fix \varphi = \bigsqcup \B,
\]
where each $B \in \B$ is a codimension zero family of fixed points such that $\partial^-B = \varnothing$. Pick a map $\iota: \B \to \Z$ such that:
\begin{itemize}
    \item if $\mathcal{a}(B)=\mathcal{a}(B')$, then $\iota(B)=\iota(B')$,
    \item if $\mathcal{a}(B)<\mathcal{a}(B')$, then $\iota(B)<\iota(B').$
\end{itemize}
Then there is a spectral sequence
\[
E^1_{p,q}=\bigoplus_{\iota(B)=p} H_{n+p+q+\CZ_\varphi(B)}(B,\partial^+B) \Longrightarrow \HF_{p+q}(\varphi,+).
\]
\end{proposition}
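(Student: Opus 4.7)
The plan is to prove this via a Morse--Bott type perturbation combined with the spectral sequence of a filtered chain complex. The starting observation is that, although each $B \in \B$ is a codimension-zero family of degenerate fixed points, the hypothesis that $\partial^+B$ or $\partial^-B$ is empty means $\phi$ is generated, near $B$, by a Hamiltonian with a definite sign on one collar of the boundary; this is what allows $B$ to be resolved cleanly. I would perturb $\phi$ to a non-degenerate exact symplectomorphism $\widetilde\phi$ by adding, in a neighborhood $U_B$ of each $B$, a time-independent Hamiltonian of the form $\epsilon\, h_B \circ \pi_B$, where $h_B\colon B \to \R$ is Morse with gradient behaving appropriately along the non-empty boundary component, $\pi_B$ is a local retraction $U_B \to B$, and $\epsilon > 0$ is small. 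Standard Morse--Bott analysis then identifies the fixed points of $\widetilde\phi$ inside $U_B$ with the critical points of $h_B$; each such critical point $p$ has action $\mathcal{a}(B)+O(\epsilon)$ and Conley--Zehnder index differing from $\CZ_\phi(B)$ by a shift determined by $\mathrm{ind}(p)$. Since $\widetilde\phi$ is isotopic to $\phi$ through exact symplectomorphisms, $\HF_\bullet(\widetilde\phi,+) \cong \HF_\bullet(\phi,+)$.

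Next, I would filter the Floer complex by
\[
F_p CF_\bullet(\widetilde\phi,+) \coloneqq \bigoplus_{\iota(B) \leq p} CF_\bullet^{\mathrm{loc}}(B),
\]
where $CF_\bullet^{\mathrm{loc}}(B)$ is the subspace generated by the critical points of $h_B$ in $U_B$. The compatibility of $\iota$ with $\mathcal{a}$, together with the fact that $\widetilde\phi$-Floer trajectories strictly decrease action (an energy estimate using exactness), ensures that this is a bounded decreasing filtration preserved by the Floer differential. The spectral sequence of a filtered chain complex then converges to $\HF_\bullet(\widetilde\phi,+)=\HF_\bullet(\phi,+)$, with $E^1_{p,q}$ equal to the homology of the associated graded at level $p$, which is a direct sum over $B$ with $\iota(B)=p$ of the homologies of the local complexes $CF_\bullet^{\mathrm{loc}}(B)$.

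The main obstacle, and the technical heart of the argument, is identifying each of these local homologies with $H_{n+p+q+\CZ_\phi(B)}(B,\partial^+B;\Z/2\Z)$. This is a local-to-local statement: within $U_B$ one must show that low-energy Floer cylinders between critical points of $h_B$ are in bijection with negative gradient flow lines of $h_B$ on $B$. Compactness and a gluing argument à la Piunikhin--Salamon--Schwarz handle the bijection on the interior, while the sign condition on the generating Hamiltonian near the non-empty boundary of $B$ is exactly what prevents Floer trajectories from escaping across the empty-boundary side; this asymmetry forces the resulting Morse complex to compute the relative homology $H_\bullet(B,\partial^+B;\Z/2\Z)$ rather than $H_\bullet(B)$ or $H_\bullet(B,\partial B)$. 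The degree shift $n+p+q+\CZ_\phi(B)$, with $n=\tfrac{1}{2}\dim M$, reflects the standard Floer grading convention combined with the Morse-index shift introduced by the perturbation, and yields precisely the formula in the statement.
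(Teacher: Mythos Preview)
The paper does not prove this proposition; it is quoted verbatim from \cite[Proposition 6.3]{bobadilla2022} and used as a black box. So there is no ``paper's own proof'' to compare against here.

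That said, your sketch is a faithful outline of the argument as it appears in the cited references (McLean \cite[Appendix C]{mclean2019} and Fern\'andez de Bobadilla--Pe{\l}ka \cite[\S 6.3]{bobadilla2022}): perturb each codimension-zero family $B$ by a small Morse function, filter the resulting non-degenerate Floer complex by action via $\iota$, and identify the $E^1$ page with Morse homology of $(B,\partial^+B)$ using a local Floer $=$ Morse correspondence. The one point you gloss over that is genuinely delicate in the references is the treatment of the boundary $\partial M$ of the Liouville domain: one must first push $\phi$ away from the identity on a collar of $\partial M$ by a positive Reeb flow (this is the ``$+$'' in $\HF_\bullet(\phi,+)$) and ensure no fixed points or Floer trajectories escape through $\partial M$. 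Apart from that, your proposal matches the standard approach and there is no gap to flag.
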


\begin{remark}
    In \cite{bobadilla2022}, the Floer homology and the McLean spectral sequence are considered with coefficients in $\Z/2\Z$. However, their arguments work in exactly the same way if coefficients in $\Z$ are considered.
\end{remark}

Our goal is to study the degeneration properties of the McLean spectral sequence in the setting of the Milnor fiber of plane curve singularities. To that end, we adapt the arguments of \cite{seidel1996}. We start by giving a sufficient topological condition which implies the degeneration of the McLean spectral sequence at the first page.

\begin{lemma} \label{degeneration}
    Let $(M,\lambda,\varphi)$ and $\B$ be as in Proposition \ref{spectral_seq}. Assume that the following condition holds:
    \begin{equation} \label{condition}
        \parbox{14cm}{{If $\eta: [0,1] \to M$ is a path whose endpoints are fixed by $\varphi$, and $\varphi \circ \eta$ is homotopic to $\eta$ relative to the endpoints, then both endpoints of $\eta$ lie on the same $B \in \B$.}}
    \end{equation}
    Then the McLean spectral sequence degenerates at the first page.
\end{lemma}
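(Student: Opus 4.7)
The plan is to deduce the degeneration from a topological dichotomy for Floer cylinders: every pseudo-holomorphic cylinder connecting two fixed points of $\phi$ produces a path between them satisfying the hypothesis of condition \eqref{condition}. Since the differentials $d_r$ ($r\geq 1$) of the McLean spectral sequence (after an admissible Morse--Bott perturbation) count, possibly as cascades, pseudo-holomorphic cylinders between orbits whose underlying families $B,B'\in\B$ have strictly different $\iota$-values and therefore strictly different actions, ruling out all such cylinders between distinct families will force every $d_r$ to vanish and yield degeneration at $E^1$.

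The path extraction is the standard one, inspired by Seidel \cite{seidel1996}. Given a Floer cylinder $u\colon\R\times\SS^1\to M_\phi$ in the mapping torus $M_\phi=(M\times\R)/((x,t+1)\sim(\phi(x),t))$ with asymptotic orbits corresponding to fixed points $x_\pm$ at $s\to\pm\infty$, I would lift $u$ to the universal cover to obtain $\tilde u\colon\R\times\R\to M$ satisfying $\tilde u(s,t+1)=\phi(\tilde u(s,t))$. The restriction of $\tilde u$ to $\R\times\{0\}$ reparametrizes to a continuous path $\eta\colon[0,1]\to M$ from $x_-$ to $x_+$; the restriction to $\R\times\{1\}$ is $\phi\circ\eta$; and $\tilde u|_{\R\times[0,1]}$ is itself an explicit homotopy between these two paths fixing the endpoints. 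Condition \eqref{condition} then forces $x_-$ and $x_+$ to lie in the same $B\in\B$, contradicting $\iota(B)\neq\iota(B')$, so no such cylinder can exist.

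The main technical obstacle I anticipate is ensuring compatibility with the perturbation needed to define the spectral sequence, since the families $B$ are codimension zero rather than consisting of isolated non-degenerate orbits. One must either verify that \eqref{condition} is preserved under the small admissible Hamiltonian perturbation that isolates orbits within each $B$, or work directly in the Morse--Bott / cascade model and check that each piece of a cascade yields a path of the required homotopy type, so that concatenation preserves the relation $\phi\circ\eta\simeq\eta$ relative to endpoints. The hypothesis $\partial^+B=\varnothing$ or $\partial^-B=\varnothing$ from Proposition \ref{spectral_seq} is precisely what underpins the cascade setup, and once the bookkeeping is done the adaptation of the arguments in \cite{seidel1996} to the setting of graded abstract contact open books should proceed without essential difficulty.
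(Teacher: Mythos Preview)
Your proposal is correct and follows essentially the same approach as the paper: extract from a Floer trajectory $u$ a path $\eta$ between the asymptotic fixed points together with a homotopy $\eta\simeq\phi\circ\eta$ rel endpoints, then invoke condition \eqref{condition} to conclude that all trajectories stay within a single $B$, so the Floer complex splits by action and the spectral sequence degenerates at $E^1$. The only notable difference is how the Morse--Bott issue you flag is handled: rather than working with cascades or checking stability under perturbation, the paper simply invokes the specific Hamiltonian $\check{H}_{\delta,t}$ and almost complex structure from \cite[\S 6.3.2--6.3.3]{bobadilla2022}, for which the $\phi$-twisted Hamiltonian loops are already constant points in the families $B$, so the path-extraction argument applies directly to the trajectories defining the differential.
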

\begin{proof}
    Let $\check{H}_{\delta,t}$ and $J_t$ be the time-dependent Hamiltonian and almost complex structure considered in \cite[\S 6.3.2]{bobadilla2022}. As discussed in \S 6.3.3 in loc. cit., every $\varphi$-twisted Hamiltonian loop is constant and equal to a point in some $B \in \B$. We claim that if there exists a Floer trajectory between two $\varphi$-twisted Hamiltonian loops, then the loops are points lying on the same $B \in \B$. Indeed, let $u: \R^2 \to M$ be a Floer trajectory between the constant loops $p_-$ and $p_+$. In particular, for every $(s,t) \in \R^2$ it satisfies
    \begin{itemize}
        \item $\varphi(u(s,t+1))=u(s,t)$,
        \item $\lim_{s \to \pm \infty} u(s,t) = p_{\pm}$.
    \end{itemize}
    From $u$ we can obtain a smooth homotopy $\bar{u}: [0,1] \times \R \to M$ relative to the endpoints $p_-$ and $p_+$ such that 
    % Consider the smooth map $\bar{u}: [0,1] \times \R \to M$ defined by
    % \[
    % \bar{u}(s,t) \coloneqq
    % \begin{cases}
    %     p_- & \quad \text{ if } s=0 \\
    %     u\left(\log\left(\frac{s}{1-s}\right),t\right) & \quad \text{ if } 0 < s < 1 \\
    %     p_+ & \quad \text{ if } s=1
    % \end{cases} \quad .
    % \]
    % %Note that for every $(s,t) \in [0,1] \times \R$, $\varphi(\bar{u}(s,t+1))=\bar{u}(s,t)$.
    % For each $t \in \R$, the map $\bar{u}_t \coloneqq \bar{u}(\cdot,t): [0,1] \to M$ is a path joining $p_-$ to $p_+$, so $\bar{u}$ furnishes a homotopy relative to the endpoints between $\bar{u}_0$ and $\bar{u}_1$. Moreover, 
    $\varphi \circ \bar{u}(\cdot,1) = \bar{u}(\cdot,0)$, so by condition (\ref{condition}) there exists $B \in \B$ such that $p_-,p_+ \in B$, as desired.

    Recall the Floer complex $\mathrm{CF}_*(\check{H}_{\delta,t},J_t)$ is generated by the $\varphi$-twisted Hamiltonian loops, and graded by the minus Conley-Zehnder index. Its differential counts the signed number of Floer trajectories between $\varphi$-twisted Hamiltonian loops. The fact that there are no Floer trajectories between different families of $\B$ in particular implies that the complex $\mathrm{CF}_*(\check{H}_{\delta,t},J_t)$ can be expressed as a direct sum of complexes by grouping loops with the same action functional. Moreover, the filtration inducing the McLean spectral sequence %comes from bounding the action functional of loops, so it
    coincides with the natural filtration induced by the direct sum decomposition of $\mathrm{CF}_*(\check{H}_{\delta,t},J_t)$, so the spectral sequence degenerates at the first page.
\end{proof}

\begin{lemma} \label{topological-condition}
    The graded abstract contact open book $(\F,\lambda,\widetilde{\varphi^m})$ satisfies condition \eqref{condition} in Lemma \ref{degeneration}.
\end{lemma}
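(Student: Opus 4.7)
The plan is to verify condition \eqref{condition} through the universal cover, using the explicit description of $\dphi$ on non-fixed cylinders. Let $\eta\colon[0,1]\to\F$ be a path with $\eta(0)=p\in B$, $\eta(1)=q\in B'$ and $\dphi\circ\eta\simeq\eta$ rel endpoints; the aim is to show $B=B'$.

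After a homotopy rel endpoints I may assume $\eta$ is transverse to the boundaries of the codimension-zero fixed families in $\B$, so that $\eta$ decomposes into finitely many alternating sub-arcs lying in fixed families or in \emph{non-fixed cylinders}---the $\F_{E,F}$ not contained in any $B_{m,R}$, and the $\F_E$ with $E\in\mathcal{T}_{R,S}$ and $N_E\nmid m$. On each such cylinder $C\cong[0,1]\times\SS^1$, the formula from \cite[Eq.~(123)]{bobadilla2022} (or the analogous statement for the $\F_E$ pieces) shows $\dphi|_C$ is a rigid translation in the $\SS^1$-coordinate of the form $(v,\theta)\mapsto(v,\theta+\tau_C(v))$ with $\tau_C(v)\notin\Z$ for $v$ in the open interval; otherwise the cylinder would contain a circle of fixed points and hence be absorbed into some $B\in\B$ by the construction of Section~\ref{deform-monodromy}.

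I then pass to the universal cover $\pi\colon\tilde\F\to\F$, choose a lift $\tilde p$ of $p$, and let $\widetilde{\dphi}$ be the unique lift of $\dphi$ fixing $\tilde p$. The hypothesis becomes $\widetilde{\dphi}(\tilde q)=\tilde q$, where $\tilde q$ is the endpoint of the lift $\tilde\eta$ starting at $\tilde p$. Two observations combine to identify $\Fix(\widetilde{\dphi})$ with a disjoint union of full connected lifts of families in $\B$: (i) on the preimage of each non-fixed cylinder $C$, $\widetilde{\dphi}$ acts on each $[0,1]\times\R$-strip as $(v,\tilde\theta)\mapsto(v,\tilde\theta+\tau_C(v))$, which has no fixed points by the non-vanishing of $\tau_C$; and (ii) any lift that fixes a single point of a connected lift $\tilde B''$ of some $B''\in\B$ restricts to the identity on all of $\tilde B''$, because $\dphi|_{B''}=\mathrm{id}$ by Proposition~\ref{deformed-dynamics} and lifts agree on open sets where they coincide.

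It remains to show that the particular lift $\widetilde{\dphi}$ determined by $\tilde p$ fixes no lift of a family other than $B$. Equivalently, for any path $\eta'$ in $\F$ from $p$ to a point $x\in B''$ with $B''\neq B$, the loop $\eta'^{-1}\cdot(\dphi\circ\eta')$ based at $p$ is non-contractible in $\F$. I would prove this by summing the shear contributions of $\eta'$ along each non-fixed cylinder it traverses: each crossing contributes a non-zero integer multiple of the core class $[\gamma_C]\in H_1(\F;\Z)$, and because the resolution dual graph is a tree, the core classes of the non-fixed cylinders separating $B$ from $B''$ are algebraically independent in $H_1(\F;\Z)$, so their sum cannot vanish. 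The principal obstacle is making this independence argument rigorous, which rests on the tree structure of the resolution graph together with the B\'ezout identities $a_EN_E+a_FN_F=\gcd(N_E,N_F)$ that control the shear rates $\tau_C$.
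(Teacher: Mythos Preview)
Your approach and the paper's diverge sharply. You work directly with $\dphi$ via its universal-cover lift and try to detect the loop $\eta'^{-1}\cdot(\dphi\circ\eta')$ in $H_1(\F;\Z)$; the paper instead replaces $\dphi$ by a suitable iterate and invokes a ready-made result of Seidel.

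The gap you flag is genuine, and in fact the $H_1$ route cannot work as written. The core classes $[\gamma_C]$ are \emph{not} independent in $H_1(\F;\Z)$: for any rupture piece $\F_R$ its boundary circles --- which are exactly the cores of the adjacent tubes --- sum to zero in $H_1(\F_R)$ and hence in $H_1(\F)$. Worse, whenever a trunk has $d(\mathcal{T}_{R,S})=1$ the single connecting cylinder is separating in $\F$ (because the dual graph is a tree), so its core class is already zero in $H_1(\F;\Z)$; in that case your shear sum vanishes in homology even for a path crossing only that one tube, although the loop in question is still non-contractible. The non-contractibility statement you actually need is a $\pi_1$ fact that does not reduce to homology, and establishing it directly amounts to redoing the Dehn twist analysis that Seidel already packages.

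The paper sidesteps all of this in one move: choose $N$ divisible by every $N_E$. Then $(\dphi)^N$ is the identity on every $\F_E$ and on the connecting cylinders inside each $B_{m,R}$, while on every remaining non-fixed tube it is an honest nonzero-integer Dehn twist along the core. Since $\dphi\circ\eta\simeq\eta$ rel endpoints implies $(\dphi)^N\circ\eta\simeq\eta$ rel endpoints, one applies \cite[Lemma~3(ii)]{seidel1996} to the product of Dehn twists $T\simeq(\dphi)^N$ and reads off that the endpoints of $\eta$ lie in the same component of $\Fix T$, hence in the same $B\in\B$. Passing to the $N$-th power is precisely what converts the fractional shears you were tracking into integer Dehn twists, so that Seidel's lemma applies off the shelf and no universal-cover bookkeeping is needed.
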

\begin{proof}
    Let $\eta$ be a path between two fixed points of $\dphi$ such that $\dphi \circ \eta$ is homotopic to $\eta$ relative to the endpoints. Let $N$ be a positive integer divisible by $N_E$ for every $E \in \mathcal{E}$. Then $(\dphi)^N$ is the identity on every $\F_E$ for $E \in \mathcal{S} \cup \mathcal{E}$, and also on every $\F_{E,F}$, if $E,F \in \mathcal{F}_{m,R}$ for some $R \in \mathcal{R}$ and $E \cap F \neq \varnothing$.

    Recall that a Dehn twist is isotopic to the identity if and only if the image of the fundamental group of its support is trivial in the fundamental group of the surface. Thus, the Dehn twists happening on the connected components of $M_{E,F}$ are isotopic to the identity if and only if $E, F \in \mathcal{F}_R$ for some $R \in \mathcal{R}$. Therefore, $(\dphi)^N$ is isotopic to a composition $T$ of Dehn twists which are not isotopic to the identity. In turn, $(\dphi)^N \circ \eta$ is homotopic to $\eta$, so by \cite[Lemma 3(ii)]{seidel1996} the endpoints of $\eta$ lie on the same connected component of $\Fix T$. Hence $\eta(0)$ and $\eta(1)$ lie on the same family of fixed points.
\end{proof}

\begin{proposition} \label{degeneration-braches}
    The McLean spectral sequence associated to the decomposition of Proposition \ref{deformed-dynamics} degenerates at the first page.
\end{proposition}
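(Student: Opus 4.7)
The plan is to assemble the proof by combining Lemmas \ref{degeneration} and \ref{topological-condition}, after a brief verification that the decomposition of Proposition \ref{deformed-dynamics} fits the setup of Proposition \ref{spectral_seq}. First I would take
\[
\B \;=\; \{B_{m,R}\}_{R \in \mathcal{R}} \;\cup\; \{\F_E : E \in \mathcal{T}_{R,S},\ N_E \mid m\} \;\cup\; \{\F_{\tilde{C}^{[j]}}\}_{j=1}^{b},
\]
which by Proposition \ref{deformed-dynamics} is a decomposition of $\Fix(\dphi)$ into codimension-zero families of fixed points. Item (i) of Proposition \ref{deformed-dynamics} asserts that near every boundary component of any $B \in \B$ the map $\dphi$ is the time-one flow of a non-negative time-independent Hamiltonian, which in the language of \cite[Eq.~(95)]{bobadilla2022} is precisely the statement $\partial^- B = \varnothing$ for all $B \in \B$. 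Since $\B$ is finite, one can choose a map $\iota: \B \to \Z$ compatible with any associated action, so the hypotheses of Proposition \ref{spectral_seq} are met and the McLean spectral sequence is defined.

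Next I would invoke Lemma \ref{degeneration}: it states that if the topological condition \eqref{condition} holds for $(\F,\lambda,\dphi)$, then the corresponding McLean spectral sequence degenerates at the first page. This hypothesis is exactly the content of Lemma \ref{topological-condition}, which was established via the Dehn-twist argument adapted from \cite[Lemma~3(ii)]{seidel1996}. Applying Lemma \ref{degeneration} concludes the proof.

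Because the genuinely delicate work has been distributed into the earlier lemmas, I expect no real obstacle here; the whole proposition is a two-line synthesis. The only point worth a quick sanity check is that the exact family $\B$ one plugs into Proposition \ref{spectral_seq} agrees with the one for which Lemma \ref{topological-condition} was proved. Since Lemma \ref{topological-condition} is stated for $(\F,\lambda,\dphi)$ together with the fixed-point decomposition from Proposition \ref{deformed-dynamics}, this is automatic, and no refinement of the families $B_{m,R}$, $\F_E$, $\F_{\tilde{C}^{[j]}}$ is required.
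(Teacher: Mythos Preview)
Your proposal is correct and follows essentially the same route as the paper: the paper's own proof is the single sentence ``It follows from combining Lemma \ref{degeneration} and Lemma \ref{topological-condition}.'' Your additional verification that the decomposition of Proposition \ref{deformed-dynamics} satisfies the hypotheses of Proposition \ref{spectral_seq} is a reasonable expansion, but adds nothing new beyond what is already implicit in the statement.
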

\begin{proof}
    It follows from combining Lemma \ref{degeneration} and Lemma \ref{topological-condition}.
\end{proof}

\section{Comparing (co)homologies}

Armed with the results of Sections \ref{section:contact-loci} and \ref{section:floer-homology}, we are now able to compute the cohomology of the contact loci and the Floer homology of the monodormy iterates associated to the plane curve $f$. Recall that we have decompositions
\begin{align*}
    \X_m & = \left( \bigsqcup_{R \in \mathcal{R}} \frakZ_{m,R} \right) \sqcup \left( \bigsqcup_{R,S \in \mathcal{R} \cup \mathcal{S}} \bigsqcup_{\substack{E \in \mathcal{T}_{R,S} \\ N_E \mid m}} \X_{m,E} \right) \\
    \mathrm{Fix}(\widetilde{\varphi^m}) & = \left( \bigsqcup_{R \in \mathcal{R}} B_{m,R} \right) \sqcup \left( \bigsqcup_{R,S \in \mathcal{R} \cup \mathcal{S}} \bigsqcup_{\substack{E \in \mathcal{T}_{R,S} \\ N_E \mid m}} \F_E  \right) \sqcup \left( \bigsqcup_{j=1}^r \F_{\widetilde{C}^{[j]}}\right)
\end{align*}

coming from Corollary \ref{contact-loci-decomposition} and Proposition \ref{deformed-dynamics} respectively.

\begin{proposition} \label{piecewise-isomorphisms}
    For every $m \geq 1$ there are isomorphisms
    \begin{align*}
        H^{\bullet + 4m-2m_{E_{\dom(m,R)}}\nu_{E_{\dom(m,R)}}}_c(\frakZ_{m,R}) &\cong H^\bullet(B_{m,R}, \partial B_{m,R}) \ &&\text{for } R \in \mathcal{R}, \\
        H^{\bullet + 4m-2m_E\nu_E}_c(\X_{m,E}) &\cong H^\bullet(\F_E, \partial \F_E) \ &&\text{for } E \in \mathcal{T}_{R,S} \text{ such that $N_E$ divides $m$}.
        %H^{\bullet + 2m\left(2 - \frac{\nu_E}{N_E}\right)}_c(\X_{m,E}) &\cong H^\bullet(\F_E, \partial \F_E) \quad &&\text{for } E \in \mathcal{T}_{R,S} \text{ such that $N_E$ divides $m$}.
    \end{align*}    
\end{proposition}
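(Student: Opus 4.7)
I would prove both isomorphisms simultaneously by a piecewise comparison, matching cohomology groups in each case using the explicit topological descriptions from Theorems \ref{XmE-computation} and \ref{rupture-topology} on the contact-locus side, and Proposition \ref{deformed-dynamics}(iii) on the monodromy side. The main computational tools are the K\"unneth formula for compactly supported cohomology, Poincar\'e duality for open oriented surfaces, and Lefschetz duality for compact oriented surfaces with boundary.

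For the trunk isomorphism, I would start from $\X_{m,E}\cong\bigsqcup^{d(\mathcal{T}_{R,S})}\C^\times\times\C^{2m-m_E\nu_E}$. The K\"unneth formula gives
\[
H^{\bullet+2(2m-m_E\nu_E)}_c(\X_{m,E})\cong d(\mathcal{T}_{R,S})\cdot H^\bullet_c(\C^\times),
\]
and Poincar\'e duality identifies $H^\bullet_c(\C^\times)$ with $H_{2-\bullet}(\SS^1)$. On the monodromy side, Proposition \ref{deformed-dynamics}(iii)(2) says $\F_E$ is a disjoint union of $d(\mathcal{T}_{R,S})$ cylinders, and Lefschetz duality gives $H^\bullet(\SS^1\times[0,1],\partial)\cong H_{2-\bullet}(\SS^1)$. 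Since the shift $2m(2-\nu_E/N_E)$ equals $2(2m-m_E\nu_E)$, the two sides agree degree by degree.

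For the rupture isomorphism, I would handle the three subcases of Theorem \ref{rupture-topology} in parallel with Proposition \ref{deformed-dynamics}(iii)(3). Case (iii) is vacuous. The main case is (i), where $R$ is an $m$-divisor: $\frakZ_{m,R}\cong\bigsqcup^{d(R)} T\times\C^{2m-m_R\nu_R}$ with $T$ a compact oriented surface with $\sum_{S\in\mathcal{N}_R}d(\mathcal{T}_{R,S})$ punctures and a specified Euler characteristic, while $B_{m,R}$ is a disjoint union of $d(R)$ compact oriented surfaces with the same number of boundary circles and the same Euler characteristic, hence the same genus. Matching the components, each copy of $T$ is diffeomorphic to the interior of one component of $B_{m,R}$, and excision then yields $H^\bullet_c(T)\cong H^\bullet(B,\partial B)$ on each connected component $B$. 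Combining with the K\"unneth formula for the affine factor $\C^{2m-m_R\nu_R}$ closes the case, since the shift $2m(2-\nu_R/N_R)=2(2m-m_R\nu_R)$ matches its real dimension. Case (ii) is analogous but simpler: $\frakZ_{m,R}$ is a disjoint union of affine spaces whose compactly supported cohomology is concentrated in top degree, $B_{m,R}$ is a disjoint union of closed disks whose relative cohomology is concentrated in degree $2$, and the shift again lines these up.

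The main obstacle is verifying that all numerical invariants on the two sides genuinely coincide: the number of connected components, the number of boundary circles versus punctures, and the Euler characteristic formulas with their three subcases according to how many ends are attached to $R$. By construction both sides are controlled by the same combinatorial data read off the $m$-separating log resolution, so the matching holds tautologically once both theorems are in place.
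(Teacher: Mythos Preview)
Your proposal is correct and follows essentially the same approach as the paper. The paper's proof is more terse: it simply observes the homeomorphisms $\frakZ_{m,R}\cong (B_{m,R}\setminus\partial B_{m,R})\times\C^{m(2-\nu_{E_\dom(m,R)}/N_{E_\dom(m,R)})}$ and $\X_{m,E}\cong(\F_E\setminus\partial\F_E)\times\C^{m(2-\nu_E/N_E)}$ and then invokes the identification of $H^\bullet_c$ of an interior with $H^\bullet$ of the compactification relative to its boundary, together with the K\"unneth shift from the affine factor; your trunk case takes a slightly longer route through Poincar\'e and Lefschetz duality, but your rupture case is exactly this argument, and the numerical matching you flag as the main obstacle is precisely what the paper packages into those homeomorphisms.
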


\begin{proof}
    From Theorem \ref{XmE-computation}, we know that if $E$ is an exceptional divisor in the trunk $\mathcal{T}_{R,S}$, then $\X_{m,E}$ is homeomorphic to $\bigsqcup^{c(\mathcal{T}_{R,S})} \C^\times \times \C^{2m - m_E \nu_E}$. In turn, from Proposition \ref{deformed-dynamics}.\ref{cylinders-dynamics}, we have that $M_E$ is homeomorphic to a disjoint union of $c(\mathcal{T}_{R,S})$ cylinders. Hence we get the homeomorphism
    \[
    \X_{m,E} \cong (\F_E \setminus \partial \F_E) \times \C^{2m - m_E \nu_E}.
    \]
    Similarly, from Theorem \ref{rupture-topology} and Proposition \ref{deformed-dynamics}.\ref{BmR-dynamics}, we obtain the homeomorphism
    \[
    \frakZ_{m,R} \cong (B_{m,R} \setminus \partial B_{m,R}) \times \C^{2m-m_{E_\dom(m,R)}\nu_{E_\dom(m,R)}}.
    \]
    Thus the result follows from the fact that compactly supported cohomology is canonically isomorphic to cohomology of a compactification relative to the boundary, and dimension counting.
    
    %From \ref{XmE-computation}, \ref{rupture-topology} and \ref{deformed-dynamics} we have homeomorphisms
    %\[
    %\frakZ_{m,R} \cong (B_{m,R} \setminus \partial B_{m,R}) \times \C^{m\left(2 - \frac{\nu_{E_\dom(m,R)}}{N_{E_\dom(m,R)}}\right)} \quad \text{and} \quad \X_{m,E} \cong (\F_E \setminus \partial \F_E) \times \C^{m\left(2 - \frac{\nu_E}{N_E}\right)}.
    %\]
    %Thus the result follows from the fact that compactly supported cohomology is canonically isomorphic to cohomology of a compactification relative to the boundary, and dimension counting.
\end{proof}

Note that the shift in the grading of the cohomology groups in Proposition \ref{piecewise-isomorphisms} depends on the divisors $E$ and $R$. This dependence on the divisor cancels out the shift in grading in the McLean spectral sequence.

\begin{lemma} \label{shift-computation}
    We have
    \begin{align*}
        1 + \CZ(B_{m,R}) + 2m \left(2 - \frac{\nu_{E_\dom(m,R)}}{N_{E_\dom(m,R)}}\right) &= 2m + 1 &&\text{for } R \in \mathcal{R}, \\ 
        1 + \CZ(\F_E) + 2m \left(2 - \frac{\nu_E}{N_E}\right) &= 2m + 1 &&\text{for } E \in \mathcal{E} \cup \mathcal{S}.
    \end{align*}
\end{lemma}

\begin{proof}
    The result follows immediately from Proposition \ref{deformed-dynamics}.
\end{proof}

Putting everything together, we can prove our main result.

\begin{proof}[Proof of Theorem \ref{main-result}]
    There are isomorphisms
    \begin{align} 
    H^\bullet_c(\X_m) &\cong \left(\bigoplus_{R \in \mathcal{R}} H^\bullet_c(\frakZ_{m,R})\right) \oplus \left(\bigoplus_{R, S \in \mathcal{R} \cup \mathcal{S}} \bigoplus_{\substack{E \in \mathcal{T}_{R,S} \\ N_E \mid m}} H^\bullet_c(\X_{m,E})\right) \label{cohomologies} \\
    \HF_\bullet(\varphi^m,+) &\cong \left(\bigoplus_{R \in \mathcal{R}} H_{\bullet+1+\CZ(B_{m,R})}(B_{m,R}, \partial B_{m,R})\right) \oplus \nonumber \\
    &\phantom{\cong} \oplus \left(\bigoplus_{R, S \in \mathcal{R} \cup \mathcal{S}} \bigoplus_{\substack{E \in \mathcal{T}_{R,S} \\ N_E \mid m}} H_{\bullet+1+\CZ(\F_E)}(\F_E, \partial \F_E)\right). \nonumber
    \end{align}
    The first isomorphism follows immediately from Corollary \ref{contact-loci-decomposition}, while the second isomorphism follows from the degeneration of the McLean spectral sequence in Proposition \ref{degeneration-braches}. An important observation is that the pieces $\F_{\widetilde{C}^{[j]}}$ do not contribute to the Floer homology. Indeed, in this case the piece $\F_{\widetilde{C}^{[j]}}$ is a cylinder, and the subset $\partial^+ \F_{\widetilde{C}^{[j]}}$ is one its boundary components. Then the claim follows because the homology of a cylinder relative to one of its boundary components is zero. Now Proposition \ref{piecewise-isomorphisms} and Lemma \ref{shift-computation} give the result.
    % Finally, Proposition \ref{deformed-dynamics} and Proposition \ref{piecewise-isomorphisms} show that the shift in the degree is
    % \[
    % 1 + \CZ(\F_E) + 2m \left(2 - \frac{\nu_E}{N_E}\right) = 1 + 2m \left( \frac{\nu_E}{N_E} - 1 + 2 - \frac{\nu_E}{N_E}  \right) = 2m + 1.
    % \]
\end{proof}

\begin{remark}
    Note that, since the spectral sequence degenerates on the first page, we did not need to compute the value of an action of $\varphi$ at the sets $B_{m,R}$ and $\F_E$. 
    %This is because if the value of the action at $B_{m,R}$ (resp. $\F_E$) changes, then the relative homology of $B_{m,R}$ (resp. $\F_E$) appears in a different column of the $E^1$ page, but with the same total degree. Since we have $E^1$ degeneration such a shift does not affect the result.
    Nevertheless, by \cite[Proposition 7.4]{bobadilla2022}, the action can be chosen so that its values are
    \begin{align*}
        \mathcal{a}(B_{m,R}) & = (t_0b \cdot b_{E_{\mathrm{dom}}} + \varepsilon_0)\frac{m}{N_{E_{\mathrm{dom}}}}, \\
        \mathcal{a}(M_E) & = (t_0b \cdot b_E + \varepsilon_0)\frac{m}{N_E}, \\
        \mathcal{a}(M_{\widetilde{C}^{[j]}}) & = \varepsilon_0 m.
    \end{align*}
    Here, $H = \sum_{E \in \mathcal{E}} b_E E$ is an %$\mu$-
    ample divisor that is fixed during the construction of the A'Campo abstract contact open book with $b_E < 0$ for every $E \in \mathcal{E}$, $b > 0$ an integer such that $bH$ is very ample, and $0 < \varepsilon_0 \ll t_0 \ll 1$ are arbitrarily small positive numbers as in \cite[\S 7.2]{bobadilla2022}.
\end{remark}

\begin{remark}
    The isomorphism of Theorem \ref{main-result} is not canonical because it involves the isomorphism of a vector space with its dual, but also because we used a spectral sequence to compute the Floer homology, so we can only know the associated graded of $\HF_\bullet(\varphi^m, +)$, and in particular the dimension of the vector space at each degree. However, there is a work in progress by M. McLean in which a canonical isomorphism is proposed, see the slides of his talk in: \url{https://www.math.stonybrook.edu/~markmclean/talks/FloerArcTalk.pdf}. 
\end{remark}

We collect all of our computations into the following result.

\begin{corollary}
    The compactly supported cohomology of $\X_m$ is given by
    \begin{align*}
        H^\bullet_c(\X_m) \cong & \left(\bigoplus_{\substack{R \in \mathcal{R} \text{ such that} \\ N_R \mid m}} \bigoplus^{c(R)} H_c^{\bullet%-4m+2m_R\nu_R
        }(T_{m,R} \times \C^{2m - m_R \nu_R})\right) \oplus \\
        \oplus & \left(\bigoplus_{\substack{R \in \mathcal{R} \text{ such that} \\ N_R \nmid m \text{ and} \\
        \exists E \in \mathcal{F}_R \text{ s.t. } N_E \mid m}} \bigoplus^{c(\mathcal{F}_R)} H_c^{\bullet%-4m+2m_{E_{\mathrm{dom}}(m,R)}\nu_{E_{\mathrm{dom}}(m,R)}
        }(\C \times \C^{2m - m_{E_\dom(m,R)} \nu_{E_\dom(m,R)}})\right) \oplus \\
        \oplus & \left(\bigoplus_{R, S \in \mathcal{R} \cup \mathcal{S}} \bigoplus_{\substack{E \in \mathcal{T}_{R,S} \\ \text{such that } N_E \mid m}} \bigoplus^{c(\mathcal{T}_{R,S})} H_c^{\bullet%-4m+2m_E\nu_E
        }(\C^\times \times \C^{2m - m_E \nu_E})\right).
    \end{align*}
    Here, for every $m$-divisor $R \in \mathcal{R}$, the surface $T_{m,R}$ is a compact Riemann surface of genus
    \[
    1+\frac{1}{2c(R)} \Bigg( N_R(\mathrm{val}(R)-2)-\sum_{\substack{E \in (\mathcal{E} \cup \mathcal{S})\setminus \{R\} \\ R \cap E \neq \varnothing}} \gcd(N_R,N_E) \Bigg)
    \]
    with
    $
    \frac{1}{c(R)}\sum_{S \in \mathcal{N}_R} \gcd(N_R,N_S)
    $
    points removed. The end $\mathcal{F}_R$ should be interpreted to mean $\mathcal{F}_{R,i}$ in the exceptional case explained in Lemma \ref{end-well-defined}. Also, $\mathrm{HF}_\bullet(\varphi^m,+) \cong H^{\bullet+2m+1}_c(\X_m)$.
\end{corollary}
\begin{proof}
    The computation of $H^\bullet_c(\X_m)$ follows from combining equation \eqref{cohomologies} and Theorems \ref{XmE-computation} and \ref{rupture-topology}. The isomorphism with Floer homology is 
    Theorem \ref{main-result}.
\end{proof}

\begin{corollary}
    Both the homeomorphism class of the restricted $m$-contact locus and the Floer homology of the $m$-th iterate of a symplectic monodromy are invariants of the embedded topological type of the plane curve $(C,0) \subset (\C^2,0)$.
\end{corollary}

\bibliographystyle{amsplain}
\bibliography{plane-curves}

\vspace{2cm}
\noindent
\textsc{Javier de la Bodega}. \url{jdelabodega@bcamath.org}, \url{javier.delabodega@kuleuven.be} \\
Basque Center for Applied Mathematics. Alameda Mazarredo 14, 48009 Bilbao, Spain. \\
KU Leuven, Department of Mathematics. Celestijnenlaan 200B, 3001 Heverlee, Belgium. \\

\noindent
\textsc{Eduardo de Lorenzo Poza}. \url{eduardo.delorenzopoza@kuleuven.be} \\
Basque Center for Applied Mathematics. Alameda Mazarredo 14, 48009 Bilbao, Spain. \\
KU Leuven, Department of Mathematics. Celestijnenlaan 200B, 3001 Heverlee, Belgium.

\end{document}